\newif\ifsiopt
\title{Pushing the Complexity Boundaries of Fixed-Point Equations: Adaptation to Contraction and Controlled Expansion}
\author{Jelena Diakonikolas\\
University of Wisconsin-Madison\\
\texttt{jelena@cs.wisc.edu}}
\date{}
\newtheorem{theorem}{Theorem}
\newtheorem{lemma}{Lemma}
\newtheorem{corollary}{Corollary}
\newtheorem{proposition}{Proposition}
\newtheorem{assumption}{Assumption}
\newtheorem{fact}{Fact}
\newtheorem{definition}{Definition}
\def\1{\bm{1}}
\newcommand{\norm}[1]{\left\lVert#1\right\rVert}
\def\ebar{{\overline{\epsilon}}}
\def\vzero{{\bm{0}}}
\def\vone{{\bm{1}}}
\def\vg{{\bm{g}}}
\def\vu{{\bm{u}}}
\def\vv{{\bm{v}}}
\def\vx{{\bm{x}}}
\def\vy{{\bm{y}}}
\def\vxh{\hat{\bm{x}}}
\def\mF{{\bm{F}}}
\def\mG{{\bm{G}}}
\def\mId{{\bm{\mathrm{Id}}}}
\def\mJ{{\bm{J}}}
\def\mR{{\bm{R}}}
\def\mS{{\bm{S}}}
\def\mT{{\bm{T}}}
\def\mTt{\widetilde{\bm{T}}}
\def\mFh{\widehat{\bm{F}}}
\def\cC{{\mathcal{C}}}
\def\cE{{\mathcal{E}}}
\def\cG{{\mathcal{G}}}
\def\cL{{\mathcal{L}}}
\def\cN{{\mathcal{N}}}
\def\cO{{\mathcal{O}}}
\def\cS{{\mathcal{S}}}
\def\cX{{\mathcal{X}}}
\def\sR{{\mathbb{R}}}
\newcommand{\dist}{\mathrm{dist}}
\DeclareMathOperator*{\argmin}{arg\,min}
\newcommand{\innp}[1]{\langle #1 \rangle}
\newcommand{\dom}{\mathrm{dom}}
\newcommand{\prox}{\mathrm{prox}}
\newcommand{\proj}{\mathrm{Proj}}
\begin{document}

\maketitle

\begin{abstract}
Fixed-point equations with Lipschitz operators are central to areas such as optimization, game theory, economics, and dynamical systems. When the operator is contractive or nonexpansive (i.e., when its Lipschitz constant is $\gamma \leq 1$), decades of work have established efficient algorithms with tight oracle complexity guarantees. In sharp contrast, even mildly expansive operators ($\gamma > 1$) render fixed-point computation intractable in the worst case, with exponential oracle lower bounds. This dichotomy leaves open a fundamental question: are there intermediate regimes where efficient approximation is still possible? 
Our main contribution is in establishing such regimes. First, we show that a seemingly misguided idea---running Halpern iteration with a fixed step size---provably finds $\epsilon$-approximate fixed points at near-optimal oracle complexity for contractive and nonexpansive operators, and it succeeds for mildly expansive operators up to the hardness frontier. Building on this insight, we design the Gradual Halpern Algorithm (GHAL) and its parameter-free variant AdaGHAL, which adapt automatically to the operator’s Lipschitz constant and recover optimal complexity across the contractive and nonexpansive regimes, while extending guarantees into the mildly expansive case. Finally, we introduce the class of gradually expansive operators, permitting constant expansion (up to $\approx 1.4$), and prove that AdaGHAL finds $\epsilon$-approximate fixed points in 
$O(1/\epsilon)$ iterations---demonstrating for the first time that efficient fixed-point computation is possible under controlled expansion significantly beyond the nonexpansive regime. 
Our results apply in general normed vector spaces---including infinite-dimensional Banach spaces, and generalize to non-positively curved geodesic metric spaces. For contractive, nonexpansive, and mildly expansive operators, the guarantees we obtain are essentially tight against known lower bounds; for gradually expansive operators, our results identify a new tractable frontier in a regime where no complexity theory previously existed.
\end{abstract}

\section{Introduction}

Fixed-point equations in a normed vector space $(\cE, \norm{\cdot})$ ask for identifying an element of the space $\vx \in \cE$ with the property that $\mT(\vx) = \vx,$ where $\mT:\cE\to \cE$ is a given operator (or a mapping). If there exists $\gamma \geq 0$ such that for any $\vx, \vy \in \cE,$ we have $\norm{\mT(\vx) - \mT(\vy)} \leq \gamma \norm{\vx - \vy},$ then $\mT$ is said to be $\gamma$-Lipschitz. In this work, we are concerned with approximately solving fixed point equations: given an error parameter $\epsilon >0,$ the goal is to identify $\vx \in \cE$ such that $\norm{\mT(\vx) - \vx}\leq \epsilon.$ We refer to such $\vx$ as ``$\epsilon$-approximate fixed points.''

While simple to state, fixed-point equations encompass a broad range of problems in engineering, game theory, economics, mathematical optimization, and dynamical systems, among others (see, e.g., \cite{facchinei2007finite,yannakakis2009equilibria,todd2013computation} for many illustrative examples across these areas). Given their centrality in these scientific disciplines, the topic of how to efficiently solve fixed-point equations has been intensely investigated over the past several decades. 

Classical results in this domain have provided iterative algorithms based on oracle access to $\mT$ (i.e., algorithms that only rely on evaluations of $\mT$) with guaranteed convergence whenever the operator is contractive (i.e., Lipschitz-continuous with $\gamma < 1$), nonexpansive (i.e., Lipschitz-continuous with $\gamma = 1$), or satisfies other slightly more general but closely related properties such as quasi-nonexpansiveness or pseudo-contractivity; see  \cite{browder1967convergence,ishikawa1974fixed,mann1953mean,krasnosel1955two,halpern1967fixed,edelstein1966remark,opial1967weak}. Such results have been established in broad generality, even applying to possibly infinite-dimensional normed vector spaces. At present, the complexity of solving fixed-point equations for contractive and nonexpansive operators (i.e., with Lipschitz constant $\gamma \leq 1$) in high-dimensional settings\footnote{I.e., in settings where the oracle complexity is independent of the dimension and polynomial dependence on $1/\epsilon$ or $1/(1-\gamma)$ (when $\gamma < 1$) can be tolerated. By contrast, there remain open questions on polynomial-time solvability---with oracle complexity polynomial in $d$ and $\log(1/\epsilon)$---for general $\ell_p$ settings with $p \neq 2$; see \cite{chen2024computing} for a relevant discussion and recent progress in this domain.} is well-understood \cite{chou1987optimality,nemirovsky1991optimality,baillion1996rate,sikorski2009computational,Sabach:2017,lieder2019convergence,kim2019accelerated,diakonikolas2021potential,park2022exact,bravo2019rates,bravo2024stochastic}, with some of the recent results even characterizing the complexity of common algorithms down to constants \cite{bravo2021universal,contreras2023optimal}. For operators that are allowed even a small amount of expansion (i.e., with $\gamma$ slightly larger than one), existing results require a type of a (local) error bound that relates the fixed-point error $\|\mT(\vx) - \vx\|$ to some notion of distance between $\vx$ and a nearest fixed point to establish nonasymptotic (local or global) convergence of considered algorithms  \cite{natarajan1993condition,russell2018quantitative,lauster2021convergence,hermer2023rates}. 

In another direction, research in theoretical computer science investigated the complexity of solving fixed-point equations in a  finite dimensional vector space $\sR^d$ equipped with a norm $\norm{\cdot}$ (typically the $\ell_\infty$ or $\ell_2$ norm). A prototypical problem in this area concerns solutions to fixed points of Lipschitz-continuous operators $\mT$ that map the unit hypercube to itself; a problem commonly referred to as the Brouwer fixed point problem. The existence of fixed points in this case is guaranteed by the classical Brouwer's fixed point theorem \cite{brouwer1911abbildung}, the proof of which is non-constructive. The first constructive proof, based on Sperner's lemma \cite{kuhn1968simplicial} and earlier algorithms by \cite{lemke1964equilibrium,lemke1965bimatrix} for computation of Nash equilibria in two-person games, was obtained by Scarf \cite{scarf1967approximation}. For general mappings $\mT$ with Lipschitz constant $\gamma > 1$, the complexity of Scarf's algorithm (and the algorithms that soon followed) is exponential. This raised the questions about complexity of solving such fixed-point equations (on the unit hypercube, with Lipschitz constant $\gamma > 1$, to error $\epsilon > 0$). In a seminal work, \cite{hirsch1989exponential} proved that any algorithm for this problem that is based on oracle queries must incur exponential complexity in the dimension in the worst case, and this happens as soon as the Lipschitz constant exceeds one by a constant factor in $\epsilon$. The gap in the precise exponential growth between the upper and lower bounds was settled in \cite{chen2008matching}. More generally, the discussed Brouwer's fixed-point problem is considered computationally hard (in terms of reduction-based complexity) even under white-box access to the operator $\mT$, as it is well-known to be PPAD-complete \cite{papadimitriou1994complexity}\footnote{In fact, the  Brouwer's fixed-point problem served as the most important motivating example for the introduction of the complexity class PPAD, as noted in the same work \cite{papadimitriou1994complexity}.}. A very recent work \cite{attias2025fixed} has similarly established exponential  complexity lower bounds  for oracle-based algorithms concerning $\gamma$-Lipschitz operators with $\gamma > 1$ mapping the unit $\ell_2$ ball to itself. Their lower bound applies even under the ``smoothed analysis'' framework of \cite{spielman2004smoothed}, which effectively means that the problem remains hard even under smooth perturbations of problem instances. This is commonly understood as the ``worst-case'' instances being, in fact, common in the considered problem class. %

The above discussion of complexity thus raises the question: 
\begin{center}
\emph{What lies between the tractable world of nonexpansive operators and the intractable world of expansive ones?}
\end{center}
Put differently: under what conditions on a Lipschitz operator 
$\mT: \cE\to\cE$ can we guarantee efficient convergence to approximate fixed points, beyond the classical contractive and nonexpansive regimes?  In addition to being a well-recognized research challenge \cite{baillion1996rate,daskalakis2022non}, this question is not merely of theoretical interest: such problems commonly arise in a variety of applications like machine learning (e.g., in training of Generative Adversarial Networks or in Multi-Agent Reinforcement Learning; see \cite{daskalakis2022non}), feasibility problems and structured nonconvex optimization \cite{russell2018quantitative}, or in X-ray free electron laser (X-FEL) imaging \cite{hermer2023rates}.  A desirable property of algorithms addressing such problems is that they can gracefully adapt to input instances, meaning that if an input operator happens to be  nonexpansive or contractive, the algorithm should exhibit similar convergence properties to optimal algorithms for the corresponding class of instances. In this work, we make progress on all these fundamental challenges.

\subsection{Contributions and Paper Overview}

This paper takes a step beyond the classical boundary between tractable nonexpansive operators and intractable expansive operators. We show that, contrary to conventional wisdom, even seemingly ill-suited variants of Halpern iteration can provide sharp guarantees once the focus shifts to identifying a point with the \emph{target approximation} of the fixed point error. Building on this insight, we develop adaptive algorithms that unify the contractive, nonexpansive, and mildly expansive\footnote{We informally say an operator is mildly expansive if its Lipschitz constant is slightly larger than one, by a quantity proportional to the error parameter $\epsilon > 0.$} regimes, and introduce a new tractable class of gradually expansive operators. Our results apply broadly in normed vector spaces and (non-positively curved) geodesic metric spaces, pushing algorithmic guarantees to the frontier of known lower bounds where such bounds exist. Our main contributions are summarized below. 

To state the results, we use the standard asymptotic notation: $f = \cO(g)$ indicates that there exists a constant $C > 0$ independent of $f, g,$ such that $f \leq C g.$ The notation $f = \Omega(g)$ is defined by $g = \cO(f).$ Finally, $f = \Theta(g)$ indicates that simultaneously $f = \cO(g)$ and $g = \cO(f);$ i.e., it indicates that $f$ and $g$ are of the same order.  

\paragraph{Fixed-step Halpern iteration as a surprising algorithmic tool} Our work starts from investigating the properties of a simple variant of classical Halpern iteration \cite{halpern1967fixed}, where in place of the decaying step sizes $\lambda_k$ (usually of order $1/k$), we use a constant value $\lambda_k = \lambda,$ $\forall k;$ i.e., the iteration is %
\begin{equation}\label{eq:main-iteration}\tag{FixHal}
    \vx_{k+1} = \lambda \vx_0 + (1-\lambda) \mT(\vx_k).
\end{equation}
Considering such an iteration seems counterintuitive at first, as it cannot be guaranteed to converge to zero fixed point error $\|\mT(\vx_k) - \vx_k\|,$ even asymptotically (as $k \to \infty$) and assuming the operator $\mT$ is contractive.\footnote{Consider, for instance, $\mT(\vx) = \gamma \vx$ for $\gamma \in (0, 1)$. Then $\vx_k \to \frac{\lambda}{1-(1-\lambda)\gamma} \vx_0$ as $k$ tends to infinity, while the only fixed point of this operator is the zero element of the space. Thus, unless initialized at the fixed point, \eqref{eq:main-iteration} does not converge to it.} However, as we show in \Cref{lemma:fixed-step-convergence}, for any target error $\epsilon > 0$ and a $\gamma$-Lipschitz operator with $\gamma \in (0, 1],$  it is possible to choose a value of the step size $\lambda \in (0, 1)$ such that \eqref{eq:main-iteration} converges to a point $\vx_k$ with fixed-point error $\|\mT(\vx_k) - \vx_k\| \leq \epsilon$ within the number of iterations competitive with the number of iterations of algorithms that are optimal (in terms of their query oracle complexity) when the operator is nonexpansive ($\gamma = 1$) or contractive ($\gamma < 1$). Further reasoning (through the concept of a ``resolvent'') about why \eqref{eq:main-iteration} is a meaningful iteration is provided in \Cref{sec:fix-Hal-as-resolvent}.

An appealing property of \eqref{eq:main-iteration} that we leverage is that it contracts distances between successive iterates of nonexpansive operators:  for any $k \geq 1,$
\begin{equation}\label{eq:increment-contraction}
    \|\vx_{k+1} - \vx_k\| = (1-\lambda)\|\mT(\vx_k) - \mT(\vx_{k-1})\|.
\end{equation}
We observe that this property is preserved even if the operator $\mT$ is not nonexpansive, but allows for slight expansion, with its Lipschitz constant $\gamma$ being slightly larger than one (e.g., $\gamma = 1 + \lambda/2$). We use this observation to argue that \eqref{eq:main-iteration} can be used to approximate fixed points of operators that are \emph{mildly expansive} (or ``almost nonexpansive;'' see \cite{russell2018quantitative}). In particular, if $\mT$ is a $\gamma$-Lipschitz operator that maps a bounded convex set of diameter $D$ to itself and if $\gamma < 1 + \epsilon/D$ for some $\epsilon > 0$, then applying \eqref{eq:main-iteration} with an appropriate choice of the step size $\lambda$ can ensure convergence to a point with a fixed-point error $ \epsilon$. Further, the number of required iterations is near-linear in $1/\epsilon$ as long as $\gamma \leq 1 + c\epsilon/D$ for any $c \in (0, 1)$ such that $1/(1-c) = \cO(1).$ Although this result may not seem impressive on its own, we note that any algorithm that is based solely on oracle queries to $\mT$ when applied to a $\gamma$-Lipschitz operator $\mT:[0, D]^d \to [0, D]^d$ with $\gamma \geq 1 + C \epsilon/D$ for an absolute constant $C$ (independent of any of the problem parameters), must make an exponential in the dimension number of queries to $\mT$ to find  $\vx$ with fixed-point error $\|\mT(\vx) - \vx\| \leq \epsilon$ \cite{hirsch1989exponential}; see \Cref{sec:fixed-mildly-expansive} for a more detailed discussion. Thus, although modest, up to an absolute constant $C$, our result gets to the boundary of known hardness results. This reframes a ``wrong-looking'' iteration into a boundary-optimal algorithm.

\paragraph{Adaptive algorithms (GHAL and AdaGHAL)} 
Starting from these basic results about \eqref{eq:main-iteration}, we then expand our discussion and contributions by introducing algorithms that gradually decrease the value of the step size parameter $\lambda$. We first consider the case in which $D$, (an upper bound on) the diameter of the set containing algorithm iterates, is known, but the operator $\mT$ is not necessarily nonexpansive (though it is $\gamma$-Lipschitz). In \Cref{sec:GHAL-and-AdaGHAL}, we first introduce Gradual Halpern Algorithm (GHAL, \Cref{algo:grad-iterated-Halpern}), which gradually adjusts the step size $\lambda$ as it progresses. For any $\epsilon > 0$ and without a priori knowledge of the Lipschitz constant $\gamma,$ GHAL simultaneously recovers the optimal oracle complexity for both nonexpansive ($\gamma = 1$) and contractive ($\gamma <1$) operators, as long as $\gamma$ is not trivially small relative to $\epsilon/\norm{\mT(\vx_0 - \vx_0)}$ (see \cite{traub1984optimal,sikorski1987complexity,chou1987optimality,nemirovsky1991optimality,diakonikolas2021potential,park2022exact} for lower bounds). Our second algorithm, Adaptive Gradual Halpern Algorithm (AdaGHAL, \Cref{algo:adaGHAL-rev}), further removes the assumption about knowledge of $D$ (and is thus parameter-free), while maintaining the convergence properties of GHAL. 

We also prove approximation error guarantees and convergence bounds for GHAL and AdaGHAL when $\gamma > 1$ and $\mT$ maps a bounded convex  set of diameter $D$ to itself. In particular, we prove that GHAL can guarantee convergence to a point $\vx$ with fixed-point error $\norm{\mT(\vx) - \vx} \leq c(\gamma - 1)D$ within $\cO(\min\{D/\epsilon, 1/(\gamma - 1)\})$ iterations, each querying $\mT$ a constant number of times, for any $c > 1$ such that $1/(c-1) = \cO(1).$ This result is primarily useful for values of $\gamma$ between one and two, since when the diameter of the set is bounded by $D,$ we have that $\|\mT(\vx) - \vx\| \leq D.$ For $\gamma < 1 + \epsilon/D,$ this result guarantees convergence to a point with fixed-point error $\epsilon$ and, as argued above for \eqref{eq:main-iteration}, gets to the boundary of what is information-theoretically possible for polynomial-time oracle-based algorithms.

\paragraph{A new tractable class: gradually expansive operators} 
We introduce the class of $\alpha$-\emph{gradually expansive} operators, defined as operators mapping a bounded convex set $\cC$ of diameter $D$ to itself with the property that for some $\alpha \in [0, 1)$ and all $\vx, \vy \in \cC$, 
\begin{equation}\label{eq:alpha-grad-nonexpansive-def}
    \|\mT(\vx) - \mT(\vy)\| \leq \Big(1 + \frac{\alpha\max\{\norm{\mT(\vx) - \vx},\, \norm{\mT(\vy) - \vy}\}}{D}\Big)\|\vx - \vy\|.
\end{equation}
Observe that such operators allow for the Lipschitz constant of $\mT$ to be as large as $1 + \alpha$. Moreover, large expansion is possible even  between fixed points of $\mT$ and other points in the set, due to the ``max'' of fixed-point errors in the condition \eqref{eq:alpha-grad-nonexpansive-def}. We prove that for any $\epsilon > 0,$ AdaGHAL applied to an $\alpha$-gradually expansive operator $\mT$ with $\alpha \leq 0.4$ outputs  a point $\vx \in \cC$ with fixed-point error $\norm{\mT(\vx) - \vx} \leq \epsilon$ after at most $\cO(D/\epsilon)$ iterations, each requiring a constant number of oracle queries to $\mT.$ More generally, for $\alpha = \sqrt{2} -1 - \delta$ and any $\delta > 0$, the convergence of AdaGHAL to an $\epsilon$-approximate fixed point is guaranteed with polynomially (in $1/\epsilon, \ln(1/\delta), D$) many queries to $\mT,$ for any $\epsilon > 0.$

\paragraph{Generality and robustness} 
Our guarantees hold in arbitrary normed vector spaces, extend to infinite-dimensional Banach settings, and further generalize to non-positively curved geodesic metric spaces (Busemann spaces). This broad scope highlights the geometric robustness of our framework. 
For contractive, nonexpansive, and mildly expansive operators, our algorithms attain convergence guarantees that match known oracle complexity lower bounds up to universal constants. For gradually expansive operators, our results instead establish a new tractable regime where no prior complexity theory exists. 
These contributions show that fixed-point computation can be handled in a unified, adaptive way across contractive, nonexpansive, mildly expansive, and gradually expansive operators.

Finally, we provide simple numerical examples in \Cref{sec:num-exp} to illustrate the performance of proposed algorithms. More comprehensive numerical evaluation of the algorithms, although of interest, is beyond the scope of the present work, as our primary contributions are theoretical. We conclude the paper with a discussion of open questions.

\subsection{Further Notes on Related Work}

There is a long line of related work on solving fixed-point equations, spanning areas such as convex analysis, mathematical programming, game theory, and theoretical computer science. Here we focus on reviewing those works that are most closely related to ours. 

As mentioned earlier, the basic iteration \eqref{eq:main-iteration} that underlies our algorithms is a fixed-step variant of classical Halpern iteration \cite{halpern1967fixed}. Halpern iteration was originally developed as a method for solving fixed-point equations with nonexpansive operators. Its convergence properties have been studied for many decades. For instance, asymptotic convergence was established by \cite{wittmann1992approximation}, while first nonasymptotic results about convergence of Halpern iteration were established by \cite{leustean2007rates}, using  proof-theoretic techniques of~\cite{kohlenbach2008applied}. These first nonasymptotic convergence results from \cite{leustean2007rates} are rather loose, showing that the fixed point error $\|T(\vx_k) - \vx_k\|$ decays at rate roughly of the order $1/\log(k)$. This convergence rate was tightened to order-$(1/\sqrt{k})$ by \cite{kohlenbach2011quantitative}. The later results of \cite{Sabach:2017} imply that the convergence rate of Halpern iteration with step size $\lambda_k = 1/(k+1)$ can be tightened to order-$\log(k)/k$, while \cite{contreras2023optimal} provided tight rate estimates for different step sizes, proving, among other results, that a slightly different step size $\lambda_k$ (but still of order $1/k$) leads to the optimal convergence rate of order $1/k.$  All these results  \cite{leustean2007rates,kohlenbach2011quantitative,Sabach:2017,contreras2023optimal} apply to general normed real vector spaces. Further, stochastic variants of Halpern iteration have been used to address problems in reinforcement learning, where the focus is on $\ell_\infty$ setups; see, for instance, \cite{bravo2024stochastic,lee2025near}. 

In a somewhat separate line of work, convergence of Halpern iteration was also investigated specifically in Euclidean (or, slightly more generally, Hilbert) spaces. Here, the work of \cite{lieder2019convergence} and slightly later of \cite{kim2019accelerated} provide rates of the order $1/k$ for Halpern iteration with step size $\lambda_k = \Theta(1/k)$ (slightly tighter than the result from \cite{Sabach:2017}), using a computer-assisted approach of \cite{drori2014performance} to analyze Halpern iteration. In \cite{diakonikolas2020halpern}, methods based on Halpern iteration were developed to solve monotone inclusion (or root-finding problems) for monotone Lipschitz operators, using equivalence (up to scaling and translation) between solving fixed-point equations with nonexpansive operators and solving root-finding problems for cocoercive operators, which holds in Hilbert spaces (but not more generally). The same work used a restart-based strategy to address root-finding problems with strongly monotone Lipschitz operators, and the basic ideas from this strategy can be used to address fixed-point problems with potentially contractive operators. These results were further extended to remove logarithmic factors by introducing extragradient-like variants of Halpern iteration in \cite{Yoon2021OptimalGradientNorm,tran2021halpern,lee2021fast}, analyze a variant with adaptive step sizes in \cite{he2024convergence}, and to address stochastic and finite-sum settings in \cite{cai2022stochastic,cai2024variance,chen2024near}. 

Further, \cite{diakonikolas2020halpern} argued that in high dimensions,\footnote{For low-dimensional settings with nonexpansive operators, the optimal oracle complexity is of the order $d \log(1/\epsilon),$ where $d$ is the dimension and $\epsilon$ the target fixed-point error. We refer to \cite{sikorski2009computational} for a relevant discussion of related work.} the convergence rate $1/k$ of Halpern iteration is unimprovable up to a logarithmic factor, using a reduction from the lower bounds for min-max optimization in \cite{ouyang2021lower}. This lower bound was tightened to remove the logarithmic factor (even for affine operators) in \cite{diakonikolas2021potential}, where it was also proved that the convergence rate of the classical Krasnoselskii-Mann (KM) iteration \cite{mann1953mean,krasnosel1955two} (and similar iterations; see \cite{diakonikolas2021potential}) cannot be improved beyond $1/\sqrt{k}.$ Similar results were later obtained independently by \cite{contreras2023optimal} and \cite{park2022exact}, with \cite{park2022exact} also improving the constant in the general  lower bound $\Omega(1/k)$ from \cite{diakonikolas2021potential} for solving fixed-point equations with nonexpansive operators. 

Moving beyond nonexpansive operators, as mentioned in the introduction, approximating fixed-point equations quickly becomes intractable. For $\mT: [0,1]^d \to [0,1]^d$ that is $\gamma$-Lipschitz w.r.t.\ the $\ell_\infty$ norm, with $\gamma > 1,$ the classical results of \cite{hirsch1989exponential} lead to an exponential in the dimension oracle complexity lower bound for approximating the fixed-point error to some $\epsilon \in (0, 1/10)$, already for $\gamma = 1 + c \epsilon$, where $c$ is an absolute constant (see \Cref{sec:fixed-mildly-expansive} for a more detailed discussion). The challenges associated with solving fixed-point equations with general Lipschitz operators have also motivated the introduction of different complexity classes in theoretical computer science; see \cite{yannakakis2009equilibria} for an overview.  It was recently shown in \cite{attias2025fixed} that similar to \cite{hirsch1989exponential} exponential lower bounds also apply to operators mapping the unit $d$-dimensional Euclidean ball to itself and that are $\gamma$-Lipschitz with $\gamma > 1$ w.r.t.\ the $\ell_2$ norm. Further, it was shown in the same work that this exponential oracle complexity lower bound cannot be broken by slightly perturbing the problem, in the framework of smoothed analysis \cite{spielman2004smoothed}. 

We point out here that in these two (finite-dimensional $\ell_\infty$ and $\ell_2$) settings on bounded convex sets, the existence of a fixed point can be guaranteed via classical Brouwer fixed-point theorem \cite{brouwer1911abbildung}. It is well-known that this fixed-point theorem fails in infinite-dimensional Banach spaces even for nonexpansive operators,\footnote{Recall here that classical Banach fixed-point theorem \cite{banach1922operations} that guarantees existence of fixed points requires the operator to be contractive.} while for a $\gamma$-Lipschitz operator $\mT: \cC\to \cC$ with $\gamma >1,$ where $\cC$ is a bounded convex  set of diameter $D$, we have $\inf_{\vx \in \cC}\|\mT(\vx) - \vx\| \leq (1 - 1/\gamma)D$ and this inequality is tight in general \cite{goebel1973minimal}; see also \Cref{sec:fixed-mildly-expansive} for a related discussion. An open question regarding finding solutions $\vx \in \cC$ with $\|\mT(\vx) - \vx\| \leq (1 - 1/\gamma)D + \epsilon$ for $\epsilon > 0$ was raised in \cite{baillion1996rate}. Our results provide guarantees of the form $\norm{T(\vx)-\vx} \le (\gamma-1)D+\epsilon$ within $\cO\big(\frac{\gamma \log(\norm{\mT(\vx_0) - \vx_0}/\epsilon)}{\epsilon}\big)$ queries for $\gamma\in(1,2)${, i.e., within a factor of $\gamma$ of the best attainable bound}. Following the dissemination of this work, \cite{bravo2026minimax} has recently closed this gap by developing a Halpern-based algorithm that guarantees $\|\mT(\vx) - \vx\| \leq (1 - 1/\gamma)D + \epsilon$.

Finally, we note that there is a line of work that establishes convergence results for solving fixed-point equations with $\gamma$-Lipschitz operators with $\gamma > 1$, assuming there is an appropriate notion of a ``local error bound'' (see \cite{pang1997error} for a classical survey on this topic) that bounds distance to fixed-points of $\mT(\vx)$ by its fixed-point error $\norm{\mT(\vx) - \vx}$. The first such result we are aware of is \cite{natarajan1993condition}, which provided an algorithm for solving $\epsilon$-approximate fixed points in the $\ell_\infty$ setting discussed above for \cite{hirsch1989exponential}. The oracle complexity of the algorithm from \cite{natarajan1993condition} is logarithmic in $1/\epsilon$, but is however exponential otherwise, scaling with $(2\kappa (\gamma-1))^d,$ where $\kappa$ is a ``condition number'' associated with the assumed local error bound, further assumed to be bounded by $1/\epsilon.$  Additionally, a recent line of work \cite{russell2018quantitative,lauster2021convergence,hermer2023rates} provided convergence results for solving fixed-point equations with ``almost averaged operators,'' under a similar (though relaxed) local error bound known as metric (sub)regularity. Almost averaged operators are defined in \cite{russell2018quantitative} as operators $\mT$ expressible as $\mT = (1-\alpha)\mId + \alpha \mTt$, where $\alpha \in (0, 1),$ $\mId$ is the identity operator, and $\mTt$ is an operator that is $\gamma$-Lipschitz with $\gamma = \sqrt{1 + \epsilon}$ for some $\epsilon\in [0, 1)$. Under such a set of assumptions and with an appropriate condition that relates the constant of the local error bound to $\epsilon$ and $\alpha,$ it is possible to establish contraction of distance between algorithm iterates and the fixed points of $\mT,$ leading to (usually local and often linear) convergence. We consider this line of work based on local error bounds complementary to ours.

\section{Convergence of the Fixed-Step Halpern Iteration}\label{sec:fixed-step-halpern}

In this section, we analyze the basic fixed-step Halpern iteration \eqref{eq:main-iteration}. Clearly, because the step size does not diminish as we iterate, we cannot eliminate the contribution of the initial point $\vx_0$ and thus cannot hope for the iterates to always converge to a vector $\vx$ with an arbitrarily small fixed-point error. Nevertheless, convergence to an $\epsilon$-approximate fixed point is possible for a sufficiently small step size $\lambda(\epsilon)$ and it applies even if the operator is not guaranteed to be nonexpansive (but only mildly expansive), as we argue in this section. Additionally, convergence results from this section will serve as basic ingredients for obtaining more sophisticated results in later sections.

\subsection{Convergence for Nonexpansive Operators}

The main motivation for considering \eqref{eq:main-iteration} came from an observation about contraction of distances between successive iterates, as discussed in the introduction.\footnote{We note here that tracking distances between successive iterates is common in the analysis of fixed-point iterations; see, for example, \cite{baillion1996rate,cominetti2014rate,Sabach:2017,bravo2021universal,contreras2023optimal}.} %
This simple observation combined with the definition of the iteration \eqref{eq:main-iteration} can then be used to argue about convergence to an approximate fixed point error dependent on $\lambda,$ as follows.

\begin{lemma}[Convergence of the Fixed-Stepsize Iteration]\label{lemma:fixed-step-convergence}
    Given $\vx_0 \in \cE$ and iterates $\vx_{k+1}$ defined by \eqref{eq:main-iteration} for a $\gamma$-Lipschitz operator $\mT$ with $\gamma > 0,$ we have that for all $k \geq 1,$
    \begin{equation}\notag
        \begin{aligned}
             \|\mT(\vx_k) - \vx_k\| \leq\; & (1-\lambda)^{k}\gamma^k \|\mT(\vx_0) - \vx_0\| + \frac{\lambda}{1-\lambda}\|\vx_0 - \vx_{k}\|. %
        \end{aligned} 
    \end{equation}
    If $\gamma \in (0, 1]$ and $\vx_*$ is a fixed point of $\mT,$ then we further have
    \begin{equation}
        \|\mT(\vx_k) - \vx_k\| \leq (1-\lambda)^{k}\gamma^k \|\mT(\vx_0) - \vx_0\| + \frac{2\lambda}{1-\lambda} \|\vx_0 - \vx_{*}\|.
    \end{equation}
    As a consequence, for $\gamma \in (0, 1]$ and given any $\epsilon > 0,$ if $\lambda = \frac{\epsilon}{4D_* + \epsilon}$, where $D_* \geq  \|\vx_0 - \vx_*\|,$ then $\|\mT(\vx_k) - \vx_k\| \leq \epsilon$ after at most $k$ iterations, where
    \[
    k = \Big\lceil \frac{\ln(2 \|\mT(\vx_0) - \vx_0\|/\epsilon)}{\ln(1/(1 - \lambda)) + \ln(1/\gamma)} \Big\rceil = \cO\Big(\frac{\ln(\|\mT(\vx_0) - \vx_0\|/\epsilon)}{\epsilon/D_* + \ln(1/\gamma)}\Big).
    \]
\end{lemma}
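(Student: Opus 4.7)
The plan is to build everything on top of the contraction identity \eqref{eq:increment-contraction} that the excerpt already highlights. First I would unroll that identity: since $\vx_{k+1}-\vx_k=(1-\lambda)(\mT(\vx_k)-\mT(\vx_{k-1}))$ for $k\geq 1$, iterating and using $\gamma$-Lipschitzness yields
\[
\|\vx_{k+1}-\vx_k\|\;\leq\;(1-\lambda)^k\gamma^k\,\|\vx_1-\vx_0\|\;=\;(1-\lambda)^{k+1}\gamma^k\,\|\mT(\vx_0)-\vx_0\|,
\]
where the last equality just uses $\vx_1-\vx_0=(1-\lambda)(\mT(\vx_0)-\vx_0)$ from the definition of \eqref{eq:main-iteration}.

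Next I would extract $\mT(\vx_k)-\vx_k$ algebraically from the iteration. Rewriting $\vx_{k+1}-\vx_k=\lambda(\vx_0-\vx_k)+(1-\lambda)(\mT(\vx_k)-\vx_k)$ and solving,
\[
\mT(\vx_k)-\vx_k\;=\;\frac{1}{1-\lambda}(\vx_{k+1}-\vx_k)\;+\;\frac{\lambda}{1-\lambda}(\vx_k-\vx_0).
\]
Taking norms and plugging in the bound on $\|\vx_{k+1}-\vx_k\|$ from the first step immediately gives the first displayed inequality of the lemma. No use of $\gamma\leq 1$ has been made yet, so this part is general.

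For the second inequality I would show the Fejér-type property $\|\vx_k-\vx_*\|\leq\|\vx_0-\vx_*\|$ when $\gamma\leq 1$. Using $\mT(\vx_*)=\vx_*$ and the iteration,
\[
\|\vx_{k+1}-\vx_*\|\;\leq\;\lambda\|\vx_0-\vx_*\|+(1-\lambda)\gamma\,\|\vx_k-\vx_*\|,
\]
and a one-line induction (base case $k=0$ trivial; inductive step uses $\gamma\leq 1$ and convexity of the bound) shows $\|\vx_k-\vx_*\|\leq\|\vx_0-\vx_*\|$ for all $k$. The triangle inequality then gives $\|\vx_k-\vx_0\|\leq 2\|\vx_0-\vx_*\|$, which plugged into the first inequality yields the second.

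Finally, for the iteration-count consequence I would substitute $\lambda=\epsilon/(4D_*+\epsilon)$ into $\frac{2\lambda}{1-\lambda}$; a direct computation gives $\frac{2\lambda}{1-\lambda}=\frac{\epsilon}{2D_*}$, so the second summand is at most $\epsilon/2$. Forcing the first summand to also be $\leq\epsilon/2$ amounts to $k(\ln\frac{1}{1-\lambda}+\ln\frac{1}{\gamma})\geq\ln(2\|\mT(\vx_0)-\vx_0\|/\epsilon)$, giving the explicit ceiling in the statement. The $\cO$-form then follows from the elementary bound $\ln\frac{1}{1-\lambda}\geq\lambda=\frac{\epsilon}{4D_*+\epsilon}=\Theta(\epsilon/D_*)$ (for $\epsilon\lesssim D_*$; otherwise the claim is trivial since $k=1$ already suffices). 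The only mildly delicate point is the Fejér property — everything else is algebra and a geometric-series style telescoping — so I expect that induction step to be where one has to be most careful about tracking the role of the assumption $\gamma\leq 1$.
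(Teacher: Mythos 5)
Your proposal is correct and follows essentially the same route as the paper: unroll the increment-contraction identity to bound $\|\vx_{k+1}-\vx_k\|$, algebraically isolate $\mT(\vx_k)-\vx_k$ from the update rule and take norms, then establish the Fej\'er-type bound $\|\vx_k-\vx_*\|\leq\|\vx_0-\vx_*\|$ by induction using $\gamma\leq 1$, and finally substitute the prescribed $\lambda$. The only cosmetic differences are that the paper writes the isolated identity with $-(\vx_0-\vx_k)$ rather than $\vx_k-\vx_0$ and does not spell out the $\ln(1/(1-\lambda))\geq\lambda$ step for the $\cO$-form, both of which are immaterial.
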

\begin{proof}
    Observe first that by \eqref{eq:main-iteration}, we have $\vx_1 - \vx_0 = (1-\lambda)(\mT(\vx_0) - \vx_0).$ Further, by \eqref{eq:increment-contraction} and $\gamma$-Lipschitzness of $\mT,$ 
    \begin{equation}\label{eq:pf-inc-contractions}
        \|\vx_{k+1} - \vx_k\| \leq (1-\lambda)\gamma \|\vx_k - \vx_{k-1}\|
    \end{equation}
    for any $k \geq 1.$ Applying \eqref{eq:pf-inc-contractions} recursively we thus get
    \begin{equation}\label{eq:inc-bound}
        \|\vx_{k+1} - \vx_k\| \leq (1-\lambda)^{k}\gamma^k\|\vx_1 - \vx_0\| = (1-\lambda)^{k+1}\gamma^k\|\mT(\vx_0) - \vx_0\|.
    \end{equation}
    On the other hand, subtracting $\vx_k$ from \eqref{eq:main-iteration} gives $\vx_{k+1} - \vx_k = \lambda(\vx_0 - \vx_k) + (1 - \lambda)(\mT(\vx_k) - \vx_k),$ or, equivalently after a rearrangement, $\mT(\vx_k) - \vx_k = \frac{1}{1-\lambda}(\vx_{k+1} - \vx_k) - \frac{\lambda}{1-\lambda}(\vx_0 - \vx_k).$ Thus, by the triangle inequality,
    \begin{equation}\label{eq:fp-to-inc}
    \begin{aligned}
        \|\mT(\vx_k) - \vx_k\| &\leq \frac{1}{1-\lambda}\|\vx_{k+1} - \vx_k\| + \frac{\lambda}{1-\lambda}\|\vx_0 - \vx_k\|\\
        &\leq (1-\lambda)^{k}\gamma^k\|\mT(\vx_0) - \vx_0\|+ \frac{\lambda}{1-\lambda}\|\vx_0 - \vx_k\|,
    \end{aligned}
    \end{equation}
    where the second inequality is by \eqref{eq:inc-bound}. This gives the first inequality from the lemma statement. 

    For the second inequality in the lemma statement, we observe by the triangle inequality that $\|\vx_0 - \vx_k\| \leq \|\vx_0 - \vx_*\| + \|\vx_{k} - \vx_*\|,$ so it suffices to show that $\|\vx_k - \vx_*\| \leq \|\vx_0 - \vx_*\|,$ for $k\geq 0.$  This follows by induction on $k,$ since using that $\gamma \leq 1$ and $\mT(\vx_*) = \vx_*,$ we have for all $k \geq0,$
    \begin{align*}
        \|\vx_{k+1} - \vx_*\| &\leq \lambda\|\vx_0 - \vx_*\| + (1 - \lambda)\|\mT(\vx_k) - \mT(\vx_*)\|\\
        &\leq \lambda\|\vx_0 - \vx_*\| + (1 - \lambda)\|\vx_k - \vx_*\|. 
    \end{align*}
    To bound the number of iterations until $\|\mT(\vx_k) - \vx_k\| \leq \epsilon,$ observe first that when $\lambda = \frac{\epsilon}{4D_* + \epsilon},$ we have $\frac{2\lambda}{1-\lambda}\|\vx_0 -\vx_*\| \leq \frac{\epsilon}{2},$ so it suffices to bound the number of iterations $k$ until 
    \begin{equation}\notag
        (1-\lambda)^{k}\gamma^k\|\mT(\vx_0) - \vx_0\| \leq \frac{\epsilon}{2}.
    \end{equation}
    Solving the last inequality for $k,$ we get that any $k \geq \frac{\ln(2 \|\mT(\vx_0) - \vx_0\|/\epsilon)}{\ln(1/(1 - \lambda)) + \ln(1/\gamma)}$ suffices. Rounding the right-hand side to the larger integer number leads to the bound claimed in the lemma statement. 
\end{proof}

\subsection{Convergence for Mildly Expansive Operators}\label{sec:fixed-mildly-expansive}

We can observe from the analysis in the last subsection that for \eqref{eq:main-iteration} to converge to an approximate fixed point, $\mT$ is not required to be non-expansive. In particular, we can allow $\gamma$ to be slightly larger than one, and we informally refer to such operators as being ``mildly expansive.'' All that is needed to ensure convergence is that (1) the iterates remain in a bounded set so that $\|\vx_0 - \vx_{k}\|$ is bounded by some $D < \infty$ and (2) for the step size $\lambda$, we have $(1-\lambda)\gamma < 1.$ 
To keep the definitions and the main message reasonably simple, throughout this subsection and whenever we work with possibly expansive operators, we assume that $\mT$ maps a bounded convex  set of diameter $D < \infty$ to itself (\Cref{assp:compact-domain}). In infinite-dimensional spaces, this assumption is necessary to ensure that minimum possible fixed-point error is  bounded \cite{goebel1973minimal}.\footnote{We note here that there are alternative conditions that can guarantee the existence of a fixed point or boundedness of the infimal fixed-point error; however, they would require further specialization of the problem class. See, for instance, \cite{pang2011nonconvex} for one such example.} %

\begin{assumption}\label{assp:compact-domain}
    There exists a bounded convex set $\cC \subset \cE$ such that $\mT: \cC \to \cC$ and $\sup_{\vx, \vy \in \cC}\|\vx - \vy\| \leq D < \infty.$
\end{assumption}

Based on this assumption, we further assume that \eqref{eq:main-iteration} is initialized at $\vx_0 \in \cC$, so that all iterates remain in $\cC.$ We now formally prove the claimed result for a mildly expansive (a.k.a.\ almost nonexpansive \cite{russell2018quantitative}) operator $\mT.$ 

\begin{lemma}[Convergence of \eqref{eq:main-iteration} for a mildly expansive operator.]\label{lemma:mild-expansion} Given a $\gamma$-Lipschitz operator $\mT$ that satisfies \Cref{assp:compact-domain},  $\epsilon > 0$, and $\beta \in (0, 1),$ consider $\{\vx_k\}_{k \geq 0}$ initialized at some $\vx_0 \in \cC$ and updated according to \eqref{eq:main-iteration} for $k \geq 0$, where $\lambda = \frac{\beta \epsilon/D}{1 + \beta\epsilon/D}$ and $D$ is the diameter of $\cC.$ If $0 \leq \gamma < 1 + \frac{\beta \epsilon}{D},$ then $\|\mT(\vx_k) - \vx_k\| \leq \epsilon$ after at most $k  = \big\lceil \frac{\ln(\|\mT(\vx_0) - \vx_0\|/((1-\beta)\epsilon))}{-\ln((1-\lambda)\gamma)} \big\rceil$ iterations.  
\end{lemma}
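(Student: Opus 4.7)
The plan is to reuse the first inequality from \Cref{lemma:fixed-step-convergence}, which is stated for arbitrary $\gamma > 0$ (not just $\gamma \leq 1$), and then to tune parameters using the diameter bound supplied by \Cref{assp:compact-domain}. The only genuinely new ingredient relative to \Cref{lemma:fixed-step-convergence} is controlling $\|\vx_0 - \vx_k\|$ uniformly in $k$, which for the mildly expansive case cannot be done via a contraction-to-fixed-point argument (since $\gamma > 1$) and must instead come from the fact that the iterates stay in the compact convex set $\cC$.

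First, I would verify by induction that $\vx_k \in \cC$ for all $k \geq 0$. The base case $\vx_0 \in \cC$ holds by assumption. For the inductive step, $\mT(\vx_k) \in \cC$ by \Cref{assp:compact-domain}, and $\vx_{k+1} = \lambda \vx_0 + (1-\lambda) \mT(\vx_k)$ is a convex combination of two points in $\cC$ with $\lambda \in (0,1)$, hence lies in $\cC$ by convexity. Consequently $\|\vx_0 - \vx_k\| \leq D$ for all $k$. Plugging this into the first inequality of \Cref{lemma:fixed-step-convergence} yields
\begin{equation}\notag
    \|\mT(\vx_k) - \vx_k\| \leq (1-\lambda)^k \gamma^k \|\mT(\vx_0) - \vx_0\| + \frac{\lambda D}{1-\lambda}.
\end{equation}

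Next, I would substitute the prescribed step size $\lambda = \frac{\beta\epsilon/D}{1+\beta\epsilon/D}$. A direct computation gives $1 - \lambda = \frac{1}{1+\beta\epsilon/D}$ and $\frac{\lambda}{1-\lambda} = \beta\epsilon/D$, so the second term equals exactly $\beta\epsilon$. Moreover, $(1-\lambda)\gamma = \frac{\gamma}{1+\beta\epsilon/D} < 1$ precisely because of the hypothesis $\gamma < 1 + \beta\epsilon/D$, so the geometric factor $(1-\lambda)^k \gamma^k$ decays to zero. It therefore suffices to choose $k$ large enough that $(1-\lambda)^k \gamma^k \|\mT(\vx_0) - \vx_0\| \leq (1-\beta)\epsilon$, giving $\|\mT(\vx_k) - \vx_k\| \leq (1-\beta)\epsilon + \beta\epsilon = \epsilon$.

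Solving the inequality $(1-\lambda)^k \gamma^k \|\mT(\vx_0) - \vx_0\| \leq (1-\beta)\epsilon$ for $k$ (taking logs, noting that $\ln((1-\lambda)\gamma) < 0$, and flipping the inequality) yields $k \geq \frac{\ln(\|\mT(\vx_0) - \vx_0\|/((1-\beta)\epsilon))}{-\ln((1-\lambda)\gamma)}$; rounding up to the nearest integer gives the claimed bound. There is no real obstacle here: the heavy lifting was done in \Cref{lemma:fixed-step-convergence}, and the only tasks left are the invariance of $\cC$ under the iteration, the algebraic simplification of $\lambda/(1-\lambda)$ for the chosen step size, and routine parameter balancing to split the error budget as $(1-\beta)\epsilon + \beta\epsilon$.
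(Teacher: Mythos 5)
Your proof is correct and follows essentially the same route as the paper's: apply the first inequality of \Cref{lemma:fixed-step-convergence}, bound $\|\vx_0 - \vx_k\|$ by $D$ using the invariance of $\cC$, observe $\frac{\lambda}{1-\lambda} = \beta\epsilon/D$ so the residual term is exactly $\beta\epsilon$, note that $\gamma < 1 + \beta\epsilon/D$ makes $(1-\lambda)\gamma < 1$, and solve for $k$. The only cosmetic difference is that you spell out the convexity induction keeping $\vx_k\in\cC$, whereas the paper disposes of this in the prose immediately after \Cref{assp:compact-domain}.
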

\begin{proof}
    Applying \Cref{lemma:fixed-step-convergence}, we have that for all $k \geq 0,$
    \begin{align*}
        \|\mT(\vx_k) - \vx_k\| &\leq (1 - \lambda)^k \gamma^k \|\mT(\vx_0) - \vx_0\| + \frac{\lambda}{1 - \lambda}\|\vx_0 - \vx_{k+1}\|\\
        &\leq (1 - \lambda)^k \gamma^k \|\mT(\vx_0) - \vx_0\| + \beta \epsilon.
    \end{align*}
    The bound on $\gamma$ from the lemma statement ensures that $(1 - \lambda)\gamma < 1.$ Thus, to obtain the claimed bound on the number of iterations, we need to determine $k$ that ensures $(1 - \lambda)^k \gamma^k \|\mT(\vx_0) - \vx_0\| \leq (1-\beta)\epsilon.$ Solving this inequality for $k,$ we get that the inequality is satisfied for $k \geq \frac{\ln(\|\mT(\vx_0) - \vx_0\|/((1-\beta)\epsilon))}{-\ln((1-\lambda)\gamma)}$, and thus $k = \big\lceil \frac{\ln(\|\mT(\vx_0) - \vx_0\|/((1-\beta)\epsilon))}{-\ln((1-\lambda)\gamma)} \big\rceil$ iterations suffice. 
\end{proof}

Another way of phrasing \Cref{lemma:mild-expansion} is in terms of the attainable fixed-point error for an operator $\mT$ with an arbitrary Lipschitz constant $\gamma > 1.$ We state this result as the corollary below, which will be particularly useful for discussing this result in infinite-dimensional spaces.  

\begin{corollary}\label{cor:mild-expansion}
    Given a $\gamma$-Lipschitz operator $\mT$ that satisfies \Cref{assp:compact-domain}, where  $\gamma > 1$, and $\beta \in (0, 1),$ consider $\{\vx_k\}_{k \geq 0}$ initialized at some $\vx_0 \in \cC$ and updated according to \eqref{eq:main-iteration} for $k \geq 0$, where $\lambda = 1 - \frac{\beta}{\gamma}$. Then, for any $\epsilon \in (0, \|\mT(\vx_0) - \vx_0\|],$ we have $\|\mT(\vx_k) - \vx_k\| \leq \big(\frac{\gamma}{\beta} - 1\big)D + \epsilon$ after at most $k  = \big\lceil \frac{\ln(\|\mT(\vx_0) - \vx_0\|/\epsilon)}{\ln(1/\beta)} \big\rceil$ iterations.  In particular, for $\beta = 1/(1 + \frac{\epsilon}{2\gamma D})$, we have $\|\mT(\vx_k) - \vx_k\| \leq (\gamma - 1)D + \epsilon$ after $k = \cO\big(\frac{\gamma D \ln(\|\mT(\vx_0) - \vx_0\|/\epsilon)}{\epsilon}\big)$ iterations. 
\end{corollary}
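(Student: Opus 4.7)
The plan is to reduce the claim directly to Lemma 1 by choosing $\lambda$ so that the geometric term decays at rate $\beta$. Concretely, taking $\lambda = 1 - \beta/\gamma$ yields $(1-\lambda)\gamma = \beta < 1$ and $\lambda/(1-\lambda) = (\gamma - \beta)/\beta = \gamma/\beta - 1$. Substituting into the first inequality of Lemma 1 gives
\[
\|\mT(\vx_k) - \vx_k\| \leq \beta^k \|\mT(\vx_0) - \vx_0\| + (\gamma/\beta - 1)\|\vx_0 - \vx_k\|.
\]
To bound $\|\vx_0 - \vx_k\|$, I would first observe that the iterates remain in $\cC$: since $\vx_0 \in \cC$, $\mT(\vx_k) \in \cC$ by Assumption 1, and $\vx_{k+1}$ is a convex combination of these two points, convexity of $\cC$ and induction give $\vx_k \in \cC$ for all $k$. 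Hence $\|\vx_0 - \vx_k\| \leq D$, so
\[
\|\mT(\vx_k) - \vx_k\| \leq \beta^k \|\mT(\vx_0) - \vx_0\| + (\gamma/\beta - 1)D.
\]
Requiring $\beta^k \|\mT(\vx_0) - \vx_0\| \leq \epsilon$ and solving for $k$ produces the bound $k \geq \ln(\|\mT(\vx_0) - \vx_0\|/\epsilon)/\ln(1/\beta)$, and rounding up gives the first claim.

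For the ``in particular'' statement with $\beta = 1 - \epsilon/(2\gamma D)$, I would handle the iteration count using the elementary inequality $\ln(1/\beta) = -\ln(1-(1-\beta)) \geq 1 - \beta = \epsilon/(2\gamma D)$, which immediately yields $k = \cO(\gamma D \ln(\|\mT(\vx_0) - \vx_0\|/\epsilon)/\epsilon)$. For the error floor, the key algebraic identity is
\[
(\gamma/\beta - 1)D = (\gamma - 1)D + \gamma D\frac{1-\beta}{\beta} = (\gamma - 1)D + \frac{\epsilon}{2\beta}.
\]
The hypothesis $\epsilon \leq \|\mT(\vx_0) - \vx_0\| \leq D \leq \gamma D$ gives $\beta \geq 1/2$, so $\epsilon/(2\beta) \leq \epsilon$. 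Combined with the first (geometric) term driven below $\epsilon$, this produces a bound of the form $(\gamma - 1)D + \Theta(\epsilon)$; to hit the exact constant $(\gamma - 1)D + \epsilon$ one rescales: applying the first part with target error $\epsilon/2$ and replacing $\beta$ with $1 - \epsilon/(c\gamma D)$ for a sufficiently large constant $c$ (so both $\epsilon/(2\beta)$ and the geometric residual are at most $\epsilon/2$). This rescaling only affects constants and leaves the $\cO(\cdot)$ iteration count unchanged.

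The main obstacle is purely bookkeeping around constants: the natural substitution gives the claimed big-$\cO$ iteration count immediately, but matching the leading constant in the error bound $(\gamma-1)D + \epsilon$ requires the small adjustment of $\beta$ just described. Everything else follows from Lemma 1, convexity of $\cC$, and the inequality $-\ln(1-x) \geq x$.
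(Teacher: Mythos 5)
Your proof is correct and follows essentially the same route as the paper: substitute $\lambda = 1-\beta/\gamma$ into Lemma~\ref{lemma:fixed-step-convergence}, bound $\|\vx_0 - \vx_k\|\le D$ via convexity of $\cC$, and solve $\beta^k\|\mT(\vx_0)-\vx_0\|\le\epsilon$. You have in fact been more careful than the paper on the ``in particular'' clause: the paper dispatches it as a ``direct calculation,'' whereas you correctly note that $(\gamma/\beta-1)D = (\gamma-1)D + \epsilon/(2\beta)$ leaves a $\Theta(\epsilon)$ (not exactly $\epsilon$) residual with the stated $\beta$, and that a harmless rescaling of the constant inside $\beta$ (and the geometric target) recovers the exact form without affecting the $\cO(\cdot)$ iteration count.
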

\begin{proof}
    Observe first that $(1-\lambda)\gamma = \beta \in (0, 1)$ and $\frac{\lambda}{1-\lambda}\|\vx_0 - \vx_{k}\| \leq  \big(\frac{\gamma}{\beta} - 1\big) D.$ Thus, \Cref{lemma:fixed-step-convergence} implies that for all $k \geq 0,$ $\|\mT(\vx_k) - \vx_k\| \leq \beta^k \|\mT(\vx_0) - \vx_0\| + \big(\frac{\gamma}{\beta} - 1\big)D.$ Now, for any $\epsilon > 0,$ we have that $\beta^k \|\mT(\vx_0) - \vx_0\| \leq \epsilon$ after $k \geq \log_{1/\beta}\big(\frac{\|\mT(\vx_0) - \vx_0\|}{\epsilon}\big) = \frac{\ln({\|\mT(\vx_0) - \vx_0\|}/{\epsilon})}{\ln(1/\beta)}$ iterations. The last bound follows by a direct calculation, choosing $k$ slightly larger so that  $\beta^k \|\mT(\vx_0) - \vx_0\| \leq \epsilon/2.$
\end{proof}

We now provide a discussion of the obtained results. The entire discussion applies under \Cref{assp:compact-domain}.

\paragraph{Finite-dimensional vector space} Consider first the case of a finite-dimensional vector space, for instance, $\mathbb{R}^d.$ In this case, the existence of a fixed point is guaranteed via Brouwer's fixed point theorem, as discussed before. 
To approximate the fixed point error to some $\epsilon > 0,$ \Cref{lemma:mild-expansion} requires the operator $\mT$ to have only very mild expansion, with Lipschitz constant allowed to be only slightly larger than one. This may not seem impressive on its own, since it only allows $\gamma$ to take values up to $1 + \frac{\beta \epsilon}{D}$ for $\beta \in (0, 1).$ However, it turns out that (up to absolute constants), this is the best we can hope to get with an algorithm that makes a subexponential (in the dimension) number of queries to $\mT$, due to the lower bound from \cite{hirsch1989exponential}. In particular, let $M$ be the Lipschitz constant of $\mT - \mId,$ and observe that $\gamma \leq  1 + M$ (so $M \geq \gamma -1 $) by the triangle inequality. Theorem 2 in \cite{hirsch1989exponential} implies that for any $\epsilon \in (0, 1/10),$ any oracle-based algorithm applied to an operator $\mT$ that maps the unit hypercube $[0, 1]^d$ (so $D=1$), where $d \geq 3$, to itself and is $(1+M)$-Lipschitz continuous with respect to the  $\ell_\infty$ norm, in the worst case must make at least %
\begin{equation}\label{eq:lower-bound}
    \Big(c\Big(\frac{1}{\epsilon} - 10\Big)M\Big)^{d-2}
\end{equation}
queries to $\mT$ to reach a vector $\vxh$ in the unit hypercube such that its fixed point error is at most $\epsilon$ (i.e., $\|\mT(\vxh) - \vxh\|_\infty \leq \epsilon$), where $c \geq 10^{-5}$. Here, the important thing about $c$ is that it is an absolute constant, independent of the problem parameters. The authors conjecture that $c$ can in fact be made close to one. Observe  that \Cref{lemma:mild-expansion} gives a polynomial bound on the number of queries of \eqref{eq:main-iteration} for problems described in this lower bound whenever $\gamma \leq 1 + {\beta \epsilon}$ (as $D = 1$) and $\beta$ is smaller than one by some absolute constant (e.g., we can take $\beta = 0.99$ and bound the number of queries by $\cO\big(\frac{ \ln(1/\epsilon)}{\epsilon}\big)$). On the other hand, the lower bound stated in \eqref{eq:lower-bound} implies that $M$ must be bounded by an absolute constant times $\epsilon$ for the number of worst-case oracle queries of any algorithm to be subexponential, meaning that there exists an absolute constant $c' > 0$ such that any algorithm that finds an $\epsilon$-fixed point of a $(1 + c'\epsilon)$-Lipschitz operator must make an exponential in the dimension number of queries in the worst case. Thus, the result from \Cref{lemma:mild-expansion} cannot be improved, modulo the constant multiplying $\epsilon$ in the bound on $\gamma.$ 

While the above discussion considered the case $D=1,$ we note that, by a simple rescaling argument, the dependence on a more general value of $D>0$ as in our bound is similarly unavoidable (up to absolute constants, as discussed above). To see this, consider the following. Let $\mF = \mT - \mId$, so that $M$ is the Lipschitz constant of $\mF,$ for $\mF$ mapping the unit hypercube in $\sR^d$ to itself, as in the above discussion. Define $\mFh$---an operator mapping the hypercube $[0, D]^d$ to itself---via $\mFh(\vx) := \mF(\vx/D)$. It is immediate that the Lipschitz constant of $\mFh$ is $\widehat{M} = M/D$ and that for any $\epsilon > 0$ and any $\vx$ from the side-$D$ hypercube we have $\|\mFh(\vx)\|_\infty \leq \epsilon$ if and only if $\|\mF(\vx/D)\|_\infty \leq \epsilon$. Now, the lower bound stated in \eqref{eq:lower-bound} implies that for $d \geq 3$ and any $\epsilon \in (0, 1/10)$, any algorithm based on oracle queries to $\mFh$ must make at least
\begin{equation}\label{eq:lower-bound-D}
    \Big(c\Big(\frac{1}{\epsilon} - 10\Big)\widehat{M}D\Big)^{d-2}
\end{equation}
queries to $\mFh$ in the worst case to output a point $\vx$ with  error $\|\mFh(\vx)\|_\infty \leq \epsilon$. A consequence of this statement is that there exists an absolute constant $c'$ such that no (oracle query-based) algorithm can output a point with fixed-point error $\epsilon \in (0, 1/10)$ for an arbitrary $(1 + c'\epsilon/D)$-Lipschitz operator mapping a bounded convex set of diameter $D$ to itself without incurring an exponential dependence in the dimension (assuming $d\geq 3$) for the number of required oracle queries to the operator. Thus, for $d \geq 3,$ the result from \Cref{lemma:mild-expansion} cannot be improved (modulo the absolute constant multiplying $\epsilon/D$) without either making additional assumptions about the operator $\mT$ or incurring exponential complexity in the dimension. Put differently, for $\gamma > 1$ by any however small constant positive amount, no oracle-based algorithm can guarantee convergence to an arbitrarily small fixed-point error in polynomial time. 

\paragraph{Infinite-dimensional space.} For infinite-dimensional Banach spaces $(\cE, \norm{\cdot})$, the existence of a fixed point is not guaranteed  under \Cref{assp:compact-domain}, even if the operator $\mT$ is nonexpansive \cite{browder1965nonexpansive,gohde1965prinzip,kirk1965fixed}. Instead, for $\gamma$-Lipschitz operators $\mT$ with $\gamma > 1,$ it is only possible to guarantee that $\inf_{\vx \in \cC}\|\mT(\vx) - \vx\| \leq (1 - 1/\gamma)D,$ and this inequality is tight in general \cite{goebel1973minimal}.\footnote{Since the problem is no longer guaranteed to have a fixed point, the problem now becomes to find solutions with minimum value of $\|\mT(\vx) - \vx\|,$ which is also referred to as the ``minimal displacement problem;'' see \cite{goebel1973minimal,baillion1996rate}.} In other words, the best fixed-point error we can hope to achieve with any algorithm would be of the form $(1 - 1/\gamma)D + \epsilon,$ for $\epsilon > 0.$   Based on \Cref{cor:mild-expansion}, \eqref{eq:main-iteration} can attain fixed-point error $(\gamma - 1)D + \epsilon$ after $k = \cO\big(\frac{\gamma\ln(\|\mT(\vx_0) - \vx_0\|/\epsilon)}{\epsilon}\big)$ iterations, which is off by a multiplicative factor $\gamma$ and additive error $\epsilon$ compared to the bound on the minimal error $(1 - 1/\gamma)D,$ since $(\gamma - 1)D + \epsilon = \gamma(1 - 1/\gamma)D + \epsilon$. To the best of our knowledge, this is the first result that gives any nontrivial bound on the fixed-point error of general Lipschitz operators using a finite number of operator evaluations, despite a relevant open question being raised in \cite{baillion1996rate}.  

We note here that for sufficiently large $\gamma,$ the lower bound from \cite{hirsch1989exponential} stated in \eqref{eq:lower-bound-D} for the hypercube of side $D$ in $\sR^d$ ($d \geq 3$) does not preclude results of the form $(1 - 1/\gamma)D + \epsilon,$ for $\epsilon > 0,$ on the basis of such problems being included in the more general fixed-point problems in Banach spaces $(\cE, \norm{\cdot})$. The reason is that the shift by 10 in \eqref{eq:lower-bound-D} makes the lower bound meaningful only for sufficiently small values of the target fixed-point error. It is thus an interesting question whether error closer to $(1 - 1/\gamma)D + \epsilon$ than what we obtained is algorithmically attainable in general Banach spaces. This was conjectured true for the KM iteration in \cite{baillion1996rate}; however, to the best of our knowledge, such a result has not been established to date. As mentioned before, subsequent work \cite{bravo2026minimax} has established such an upper bound for a Halpern-based algorithm. 

%

\subsection{Extension to Geodesic Metric Spaces}\label{sec:busemann}

Even though our results are presented for normed vector spaces, they can be generalized to certain geodesic metric spaces. The reason is that very few properties of the norm are used in the analysis of \eqref{eq:main-iteration} and algorithms developed based on it in the upcoming section; namely, in addition to properties defining any metric, only convexity of the norm (or a metric) is needed. This means that the results extend to non-positively curved geodesic metric spaces known as Busemann spaces, which are fully characterized by convex metrics on every geodesic (see the textbook \cite{bacak2014convex} for relevant definitions and, in particular, Proposition 1.1.5 for the characterization of Busemann geodesic metric spaces via convexity of the metric). 

For concreteness, let $(\cG, d)$ denote a geodesic metric space. With a slight abuse of notation confined to the rest of this section, we use $d$ to denote the metric associated with the space $(\cG, d)$. Let $\mT: \cG \to \cG$ be an operator that is $\gamma$-Lipschitz for some $\gamma >0,$ which for the present setting means that $d(\mT(\vx), \mT(\vy)) \leq \gamma\, d(\vx, \vy)$ for any $\vx, \vy \in \cG.$

For iterates of \eqref{eq:main-iteration} to remain in $\cG,$ the iteration update for a step size $\lambda \in (0, 1)$ is written as
\begin{equation}\label{eq:fixHal-geodesic}
    \vx_{k+1} = \lambda \vx_0 \oplus (1-\lambda)\mT(\vx_k), 
\end{equation}
where the above equation is interpreted as $\vx_{k+1}$ being the element of $\cG$ on a geodesic from $\vx_0$ to $\mT(\vx_k)$ such that $d(\vx_{k+1}, \vx_0) = (1-\lambda)d(\mT(\vx_k), \vx_0).$ 

For concreteness, we show how to obtain a counterpart to \Cref{lemma:fixed-step-convergence} for a $\gamma$-Lipschitz operator $\mT$ on a Busemann space $(\cG, d).$ With this argument, all other results in this paper can be generalized to Busemann spaces in a straightforward manner, which is omitted for brevity.
\begin{lemma}[Convergence of the Fixed-Stepsize Iteration in a Busemann Space]\label{lemma:fixed-step-convergence-Busemann}
    Let  $(\cG, d)$ be a Busemann space and $\mT: \cG \to \cG$ be an operator that is $\gamma$-Lipschitz on $\cG$ for some $\gamma >0.$ Given $\vx_0 \in \cG$ and iterates $\vx_{k+1}$ defined by \eqref{eq:fixHal-geodesic} for $k \geq 0$, we have that for all $k \geq 1,$
    \begin{equation}\notag
        \begin{aligned}
             d(\mT(\vx_{k}), \vx_{k}) \leq\; & (1-\lambda)^{k}\gamma^{k} d(\mT(\vx_0), \vx_0) + \frac{\lambda}{1-\lambda}d(\vx_0, \vx_{k}).
        \end{aligned} 
    \end{equation}
    If $\gamma \in (0, 1]$ and $\vx_*$ is a fixed point of $\mT,$ then we further have
    \begin{equation}
        d(\mT(\vx_{k}), \vx_{k}) \leq (1-\lambda)^{k}\gamma^{k} d(\mT(\vx_0), \vx_0) + \frac{2\lambda}{1-\lambda}d(\vx_0, \vx_{*}).
    \end{equation}
    As a consequence for $\gamma \in (0, 1]$, given any $\epsilon > 0,$ if $\lambda = \frac{\epsilon}{4D_* + \epsilon}$, where $D_* \geq  d(\vx_0, \vx_*),$ then $d(\mT(\vx_k), \vx_k) \leq \epsilon$ after at most $k$ iterations, where
    \[
    k = \Big\lceil \frac{\ln(2 d(\mT(\vx_0), \vx_0)/\epsilon)}{\ln(1/(1 - \lambda)) + \ln(1/\gamma)} \Big\rceil = \cO\Big(\frac{\ln(d(\mT(\vx_0), \vx_0)/\epsilon)}{\epsilon/D_* + \ln(1/\gamma)}\Big).
    \]
\end{lemma}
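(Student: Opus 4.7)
The plan is to mirror the proof of \Cref{lemma:fixed-step-convergence} step by step, replacing the two linear-algebraic operations used there (taking a norm of a linear combination, and subtracting $\vx_k$ from the update equation) by their metric counterparts: the convexity of the metric on geodesics (which defines a Busemann space) and the triangle inequality. I would expect the main obstacle to be simulating the algebraic rearrangement $\mT(\vx_k) - \vx_k = \frac{1}{1-\lambda}(\vx_{k+1}-\vx_k) - \frac{\lambda}{1-\lambda}(\vx_0-\vx_k)$, which has no direct analogue in a metric space; this must be replaced by a careful chain of triangle inequalities that uses the fact that $\vx_{k+1}$ lies on the geodesic from $\vx_0$ to $\mT(\vx_k)$.

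First, I would establish the Busemann analogue of \eqref{eq:increment-contraction}. Since $\vx_{k+1} = \lambda\vx_0 \oplus (1-\lambda)\mT(\vx_k)$ and $\vx_k = \lambda\vx_0 \oplus (1-\lambda)\mT(\vx_{k-1})$, applying the convexity of $t \mapsto d(\gamma_1(t),\gamma_2(t))$ along the geodesics from $\vx_0$ to $\mT(\vx_k)$ and from $\vx_0$ to $\mT(\vx_{k-1})$ (Proposition 1.1.5 of \cite{bacak2014convex}) yields $d(\vx_{k+1},\vx_k) \le (1-\lambda) d(\mT(\vx_k),\mT(\vx_{k-1})) \le (1-\lambda)\gamma\, d(\vx_k,\vx_{k-1})$. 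Combining with $d(\vx_1,\vx_0) = (1-\lambda) d(\mT(\vx_0),\vx_0)$ (which holds by the very definition of $\vx_1$ as a point on the geodesic from $\vx_0$ to $\mT(\vx_0)$) and iterating gives $d(\vx_{k+1},\vx_k) \le (1-\lambda)^{k+1}\gamma^k d(\mT(\vx_0),\vx_0)$.

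Next, to get the analogue of \eqref{eq:fp-to-inc}, I would use that $\vx_{k+1}$ being on the geodesic from $\vx_0$ to $\mT(\vx_k)$ implies $d(\vx_{k+1},\mT(\vx_k)) = \lambda\, d(\vx_0,\mT(\vx_k))$. Two triangle inequalities give
\begin{align*}
d(\mT(\vx_k),\vx_k) &\le d(\mT(\vx_k),\vx_{k+1}) + d(\vx_{k+1},\vx_k) = \lambda\, d(\vx_0,\mT(\vx_k)) + d(\vx_{k+1},\vx_k)\\
&\le \lambda\, d(\vx_0,\vx_k) + \lambda\, d(\vx_k,\mT(\vx_k)) + d(\vx_{k+1},\vx_k),
\end{align*}
and rearranging yields $d(\mT(\vx_k),\vx_k) \le \tfrac{1}{1-\lambda} d(\vx_{k+1},\vx_k) + \tfrac{\lambda}{1-\lambda} d(\vx_0,\vx_k)$. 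Substituting the bound on $d(\vx_{k+1},\vx_k)$ from the previous step gives the first displayed inequality of the lemma.

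For the second inequality, the triangle inequality gives $d(\vx_0,\vx_k) \le d(\vx_0,\vx_*) + d(\vx_k,\vx_*)$, so it suffices to establish $d(\vx_k,\vx_*) \le d(\vx_0,\vx_*)$ for all $k \ge 0$ when $\gamma \le 1$. This I would prove by induction: the base case is trivial, and for the step, I would compare the constant geodesic at $\vx_*$ with the geodesic from $\vx_0$ to $\mT(\vx_k)$ using metric convexity, obtaining $d(\vx_{k+1},\vx_*) \le \lambda\, d(\vx_0,\vx_*) + (1-\lambda)\, d(\mT(\vx_k),\mT(\vx_*)) \le \lambda\, d(\vx_0,\vx_*) + (1-\lambda)\gamma\, d(\vx_k,\vx_*) \le d(\vx_0,\vx_*)$, where the last step uses $\gamma \le 1$ and the inductive hypothesis. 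Finally, the iteration-count bound follows by the identical algebraic calculation as in the proof of \Cref{lemma:fixed-step-convergence}: choose $\lambda = \epsilon/(4D_*+\epsilon)$ so that $\tfrac{2\lambda}{1-\lambda} D_* \le \epsilon/2$, then solve $(1-\lambda)^k\gamma^k\, d(\mT(\vx_0),\vx_0) \le \epsilon/2$ for $k$.
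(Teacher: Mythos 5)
Your proof is correct, and its overall architecture mirrors the paper's: metric convexity gives the increment contraction $d(\vx_{k+1},\vx_k)\le(1-\lambda)\gamma\,d(\vx_k,\vx_{k-1})$, induction with convexity gives the Fej\'er-type bound $d(\vx_k,\vx_*)\le d(\vx_0,\vx_*)$, and the iteration count follows by the same algebra as \Cref{lemma:fixed-step-convergence}. The one place you take a slightly different (and mildly more elementary) route is precisely the rearrangement step you flagged: you write $d(\mT(\vx_k),\vx_k)\le d(\mT(\vx_k),\vx_{k+1})+d(\vx_{k+1},\vx_k)$ and invoke the exact geodesic split $d(\vx_{k+1},\mT(\vx_k))=\lambda\,d(\vx_0,\mT(\vx_k))$, whereas the paper instead applies convexity of $d(\cdot,\mT(\vx_{k+1}))$ along the geodesic from $\vx_0$ to $\mT(\vx_k)$ to get $d(\mT(\vx_{k+1}),\vx_{k+1})\le\lambda\,d(\vx_0,\mT(\vx_{k+1}))+(1-\lambda)\,d(\mT(\vx_{k+1}),\mT(\vx_k))$, then uses Lipschitzness and the displacement bound before isolating $d(\mT(\vx_{k+1}),\vx_{k+1})$. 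Both routes produce the same intermediate inequality $d(\mT(\vx_k),\vx_k)\le\frac{1}{1-\lambda}d(\vx_{k+1},\vx_k)+\frac{\lambda}{1-\lambda}d(\vx_0,\vx_k)$; your version needs only the defining property of a geodesic at this step rather than Busemann convexity, though convexity is still indispensable for the other two steps, so the hypotheses of the lemma are unchanged.
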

\begin{proof}
    Similar to the proof of \Cref{lemma:fixed-step-convergence}, we first bound the distance between successive iterates. For $k = 0$, we have, by the definition of the iteration \eqref{eq:fixHal-geodesic}, that $d(\vx_1, \vx_0) = (1-\lambda)d(\mT(\vx_0), \vx_0)$. For $k \geq 1$, by convexity of the metric $d$ and by \eqref{eq:fixHal-geodesic}, 
    \begin{align}
        d(\vx_{k+1}, \vx_k) \leq \; & \lambda d(\vx_0, \vx_0) + (1-\lambda)d(\mT(\vx_k), \mT(\vx_{k-1})) \notag\\
        \leq \; & (1-\lambda)\gamma d(\vx_k, \vx_{k-1}),\notag
    \end{align}
    where the last inequality is by $d(\vx, \vx) = 0$ for any $\vx \in \cG$ (by the definition of a metric) and by $d(\mT(\vx_k), \mT(\vx_{k-1})) \leq \gamma d(\vx_k, \vx_{k-1})$, which holds by the definition of $\gamma$-Lipschitzness of $\mT$.

    Applying the last inequality recursively and using  $d(\vx_1, \vx_0) = (1-\lambda)d(\mT(\vx_0), \vx_0)$, we thus have
    \begin{equation}\label{eq:Buseman-displacement-bnd}
        d(\vx_{k+1}, \vx_k) \leq (1-\lambda)^{k + 1}\gamma^{k} d(\mT(\vx_0), \vx_0). 
    \end{equation}
    Now, again using the definition of the iteration \eqref{eq:fixHal-geodesic} and convexity of $d$, we get
    \ifsiopt
     \begin{align}
        d(\mT(\vx_{k+1}), \vx_{k+1}) \leq \; & \lambda d(\vx_0, \mT(\vx_{k+1})) + (1-\lambda) d(\mT(\vx_{k+1}), \mT(\vx_k))\notag\\
        \leq \; & \lambda d(\vx_0, \mT(\vx_{k+1})) + (1-\lambda)\gamma d(\vx_{k+1}, \vx_k)\notag\\
        \leq \; & \lambda d(\vx_0, \mT(\vx_{k+1})) + (1-\lambda)^{k + 2}\gamma^{k + 1} d(\mT(\vx_0), \vx_0) \notag\\
        \leq \; & \lambda d(\vx_0, \vx_{k+1}) + \lambda d(\mT(\vx_{k+1}), \vx_{k+1})\notag\\
        &+ (1-\lambda)^{k + 2}\gamma^{k + 1} d(\mT(\vx_0), \vx_0), \label{eq:Buseman-first-claim}
    \end{align}
    \else 
    \begin{align}
        d(\mT(\vx_{k+1}), \vx_{k+1}) \leq \; & \lambda d(\vx_0, \mT(\vx_{k+1})) + (1-\lambda) d(\mT(\vx_{k+1}), \mT(\vx_k))\notag\\
        \leq \; & \lambda d(\vx_0, \mT(\vx_{k+1})) + (1-\lambda)\gamma d(\vx_{k+1}, \vx_k)\notag\\
        \leq \; & \lambda d(\vx_0, \mT(\vx_{k+1})) + (1-\lambda)^{k + 2}\gamma^{k + 1} d(\mT(\vx_0), \vx_0) \notag\\
        \leq \; & \lambda d(\vx_0, \vx_{k+1}) + \lambda d(\mT(\vx_{k+1}), \vx_{k+1}) + (1-\lambda)^{k + 2}\gamma^{k + 1} d(\mT(\vx_0), \vx_0), \label{eq:Buseman-first-claim}
    \end{align}
    \fi
    where the second inequality uses $\gamma$-Lipschitzness of $\mT$, the third inequality uses \eqref{eq:Buseman-displacement-bnd}, and the last inequality is by applying the triangle inequality as $d(\vx_0, \mT(\vx_{k+1})) \leq d(\vx_0, \vx_{k+1}) + d(\mT(\vx_{k+1}), \vx_{k+1})$. Grouping the like terms in \eqref{eq:Buseman-first-claim} and simplifying now gives the first claim. 

    The rest of the lemma is proved in the same way as \Cref{lemma:fixed-step-convergence}, as the only properties of the norm used there are the triangle inequality (which holds for an arbitrary metric by definition) and convexity of the metric (which holds for metrics in Busemann spaces). Thus, we omit the proof for brevity.
\end{proof}

\subsection{Connection to Resolvents and Regularization}\label{sec:fix-Hal-as-resolvent}

Although this was not the original motivation for considering \eqref{eq:main-iteration}, upon writing the initial version of the results, the author realized that the considered iteration effectively defines the resolvent of the mapping $\mT.$ To be more specific, assuming $(\cG, d)$ is a Hadamard space (a special case of Busemann spaces) and $\mT$ is nonexpansive w.r.t.\ the metric $d$, \cite{bacak2014convex} (see Definition 4.2.1 in the same book) defines the resolvent $\mR_{\tau}(\vx)$ of the operator $\mT$ as the fixed point of the mapping $\mS_{\vx, \tau}$ defined via
\begin{equation}\label{eq:resolvent-proxy}
    \mS_{\vx, \tau}(\vy) = \frac{1}{1 + \tau}\vx \oplus \frac{\tau}{1 + \tau}\mT(\vy), \; \vy \in \cG.
\end{equation}
This definition remains meaningful if we consider more general Busemann spaces and possibly expansive operators. It is immediate from this definition that $\mR_{\tau}(\vx) = \vx$ if and only if $\mT(\vx) = \vx.$ 

It is immediate from this definition that the fixed-step Halpern iteration \eqref{eq:fixHal-geodesic} is equivalent to $\vx_{k + 1} = \mS_{\vx, \tau}(\vx_k)$ for $\tau$ satisfying $\frac{1}{1 + \tau} = \lambda.$ In other words, \eqref{eq:fixHal-geodesic} corresponds to  Picard iteration applied to the mapping $\mS_{\vx, \tau},$ whose fixed point is precisely the resolvent of $\mT$ at $\vx_0,$ $\mR_{\tau}(\vx_0) = \frac{1}{1 + \tau}\vx_0 \oplus \frac{\tau}{1 + \tau}\mT(\mR_{\tau}(\vx_0))$. Thus, another interpretation of the fixed-step Halpern iteration is as a ``regularization'' method, porting the original problem with possibly (mildly) expansive operator to one with a contractive operator $\mS_{\vx, \tau},$ much like adding a quadratic term to an objective function in smooth (weakly) convex optimization is used to replace the original problem by a smooth strongly convex one. This interpretation further allows viewing our algorithms introduced in the next section as variants of \emph{inexact} proximal point algorithms with geometrically decreasing error of the computed resolvent estimate and geometrically increasing step size parameter $\tau_k$ (i.e., geometrically decreasing $\frac{\lambda_k}{1-\lambda_k} = \frac{1}{ \tau_k}$). Thus, the proposed approach can also be viewed as a generalization of a recent regularization-based approach for smooth (convex and nonconvex) optimization in Euclidean setups \cite{lan2023optimal} to the more general fixed point equations with Lipschitz operators in Busemann spaces.      

\section{Gradual Halpern Algorithm and its Adaptive Version}\label{sec:GHAL-and-AdaGHAL}

Although it is possible to use the fixed-step Halpern iteration \eqref{eq:main-iteration} to approximately solve fixed-point equations with contractive, nonexpansive, or even mildly expansive operators, there are multiple downsides to its direct use. First, even if we knew everything about the problem, we fixed the target approximation $\epsilon > 0$, and set the step size $\lambda = \lambda(\epsilon)$ accordingly, the iteration complexity obtained from \Cref{lemma:fixed-step-convergence} would be suboptimal by a factor $\ln\big(\frac{\|\mT(\vx_0) - \vx_0\|}{\epsilon})$ for nonexpansive operators \cite{diakonikolas2021potential,park2022exact}. Second, it is generally undesirable to set the target error in advance as done in this case, because, in the worst case, the algorithm would not converge to arbitrarily small error, even asymptotically. Third, the value of $\lambda$ prescribed by \Cref{lemma:fixed-step-convergence} requires an upper bound on the iterate distance $\|\vx_{k+1} - \vx_0\| \leq D$ to be useful, which is a strong assumption particularly when the operator $\mT$ is nonexpansive (i.e., when $\gamma \leq 1$). Finally, since the algorithm can provide guarantees for any value of $\gamma \in (0, 1+\epsilon/D)$, it would be useful to have an algorithm that can automatically adapt to the value of $\gamma,$ possibly even locally, without explicit prior knowledge about local or global values of $\gamma.$ In this section, we provide and discuss algorithms that address these issues. %
\ifsiopt %
\else

\fi
For clarity of exposition, we handle separately the cases in which the operator $\mT$ is nonexpansive and in which it is mildly expansive, and build from the setting where we know the diameter of the set containing algorithm iterates to a fully parameter-free setting.

\subsection{Gradual Halpern Algorithm (GHAL)}

We consider first a simple setting in which we have an upper estimate $D$ for the diameter of the set containing algorithm's iterates. In particular, for nonexpansive operators ($\gamma \leq 1$), we have $D \leq 2 \|\vx_0 - \vx_*\|$, while under \Cref{assp:compact-domain}, $D$ is the diameter of the set $\cC$ for which $\mT: \cC \to \cC.$ The pseudocode for the algorithm (Gradual Halpern Algorithm---GHAL) is provided in \Cref{algo:grad-iterated-Halpern}. In the algorithm, there is no apparent benefit to choosing $\beta$ to be anything other than an absolute constant, so in the analysis below it helps to regard $\beta$ as some number between zero and one, such as, for example, $0.1, 0.5,$ or $0.9.$

\begin{algorithm}
\caption{Gradual Halpern Algorithm (GHAL)}\label{algo:grad-iterated-Halpern}
    \begin{algorithmic}[1]
    \Statex \textbf{Input}: $\epsilon > 0, \vx_0, D < \infty,$ $\beta \in (0, 1), \beta' \in (0, 1)$
     \Statex \textbf{Initialization}: $\epsilon_0 \leftarrow \|\mT(\vx_0) - \vx_0\|,\, k \leftarrow 0,\, \vxh_0 \leftarrow \vx_0$
     \While{$\|\mT(\vxh_k) - \vxh_k\| > \epsilon$}
     \State $k \leftarrow k + 1$
     \State $\epsilon_k \leftarrow \beta \epsilon_{k-1}$
     \State $\lambda_k \leftarrow \frac{\beta  \epsilon_k/D}{1 + \beta \epsilon_k/D}$
     \State $\vy_0 \leftarrow \vxh_{k-1}$, $j \leftarrow 0$ \Comment{Initialize \eqref{eq:main-iteration}}
     \While{$\|\mT(\vy_j) - \vy_j\| > \epsilon_k$} \Comment{Run \eqref{eq:main-iteration} until target error is reached}
     \State $\vy_{j+1}\leftarrow \lambda_k \vy_0 + (1-\lambda_k)\mT(\vy_j)$
     \If{$j \geq 1$ and $\|\vy_{j+1} - \vy_j\| \geq (1-\beta'\lambda_k)\|\vy_j - \vy_{j-1}\|$} \Comment{Safeguard for potentially expansive operators}
     \State Halt the algorithm and return $\vxh_k = \argmin\{\|\mT(\vy_j) - \vy_j\|, \|\mT(\vy_0) - \vy_0\|\}$
     \EndIf
     \State $j \leftarrow j + 1$
     \EndWhile
     \State $\vxh_k \leftarrow \vy_{j}$
     \EndWhile
     \State\Return $\vxh_{k}$
    \end{algorithmic}
\end{algorithm}

\begin{proposition}\label{prop:opt-it-complexity}
     Let $\mT$ be a $\gamma$-Lipschitz operator and let $\vxh_k$ be the output of \Cref{algo:grad-iterated-Halpern} initialized at some $\vx_0$ and run for $\beta \in (0, 1)$ such that $1/\beta, 1/(1-\beta)$ are absolute constants independent of the problem parameters. Suppose $\norm{\vy_j - \vy_0} \leq D$ for any iterates $\vy_j$ within \Cref{algo:grad-iterated-Halpern}. If $\gamma \in (0, 1],$ then $\|\mT(\vxh_k) - \vxh_k\| \leq \epsilon$ and the algorithm takes 
    $$
    \cO\Big(\min\Big\{ {\ln\Big(\frac{\|\mT(\vx_0) - \vx_0\|}{\epsilon}\Big)} \frac{1 + \ln(1/\gamma)}{\ln(1/\gamma)},\, \ln\Big(\frac{1}{\gamma}\Big) + \frac{\ln(\gamma \|\mT(\vx_0) - \vx_0\|/\epsilon)}{\ln(1/\gamma)},\, \frac{D}{\epsilon} \Big\}\Big)
    $$ 
    \eqref{eq:main-iteration} iterations (and consequently the same number of oracle calls to $\mT$). 

    If $\gamma \in (1, 1 + \frac{\|\mT(\vx_0) - \vx_0\| \beta^2(1-\beta')}{D}],$ then \Cref{algo:grad-iterated-Halpern} outputs a point $\vxh_k$ with $\|\mT(\vxh_k) - \vxh_k\| \leq \max\{\epsilon, \bar{\epsilon}\}$, where $\bar{\epsilon} = \frac{D}{\beta^2}\frac{\gamma - 1}{1 - \beta'}$, after  $\cO(\min\{\frac{D}{\epsilon \beta'},\, \frac{\beta(1-\beta')}{\gamma - 1}\})$ oracle calls to $\mT.$
\end{proposition}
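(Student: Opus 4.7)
The plan is to use \Cref{lemma:mild-expansion} as the workhorse for analyzing each inner \eqref{eq:main-iteration} loop of \Cref{algo:grad-iterated-Halpern}: observe that the step size $\lambda_k = (\beta \epsilon_k/D)/(1 + \beta \epsilon_k/D)$ is exactly the choice prescribed by that lemma with target error $\epsilon_k$, so within each outer iteration the displacement $\|\vy_{j+1} - \vy_j\|$ contracts at rate $(1-\lambda_k)\gamma$ via \eqref{eq:increment-contraction}. I would then combine these per-loop bounds through the geometric schedule $\epsilon_k = \beta \epsilon_{k-1}$, splitting into two cases according to $\gamma$.

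For the nonexpansive case $\gamma \leq 1$, I first argue that the safeguard check never triggers: $(1-\lambda_k)\gamma \leq 1 - \lambda_k < 1 - \beta'\lambda_k$ since $\beta' < 1$. Every inner loop then terminates via its success criterion, and \Cref{lemma:mild-expansion} applied with starting error $\epsilon_{k-1} = \epsilon_k/\beta$ and target $\epsilon_k$ gives at most $\lceil \ln(1/(\beta(1-\beta)))/(-\ln((1-\lambda_k)\gamma)) \rceil$ iterations at level $k$. With $K = \cO(\ln(\|\mT(\vx_0) - \vx_0\|/\epsilon))$ outer iterations and the bounds $-\ln((1-\lambda_k)\gamma) \geq \max\{\lambda_k, \ln(1/\gamma)\}$ along with $\lambda_k$ proportional to $\beta\epsilon_k/D$ (up to lower-order terms), the three terms in the stated min arise from three summation strategies: $\sum_k 1/\lambda_k$ forms a geometric series dominated by the last level, yielding $\cO(D/\epsilon)$; using $-\ln((1-\lambda_k)\gamma) \geq \ln(1/\gamma)$ uniformly and tracking the per-step ceiling gives $\cO(K(1+\ln(1/\gamma))/\ln(1/\gamma))$; and a refined split into an early burn-in phase of $\cO(\ln(1/\gamma))$ iterations followed by a tail of $\cO(\ln(\gamma \|\mT(\vx_0) - \vx_0\|/\epsilon)/\ln(1/\gamma))$ iterations yields the third bound.

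For the mildly expansive case $\gamma > 1$, the safeguard becomes a source of information rather than an obstruction. If it triggers at inner iteration $j$ of outer loop $k$, then combining the safeguard inequality with \eqref{eq:increment-contraction} and $\gamma$-Lipschitzness forces $(1-\lambda_k)\gamma \geq 1 - \beta'\lambda_k$, which rearranges to $\gamma - 1 \geq (1-\beta')\lambda_k/(1-\lambda_k) = (1-\beta')\beta \epsilon_k/D$, i.e.\ $\epsilon_k \leq D(\gamma-1)/(\beta(1-\beta'))$. Since $\vxh_k$ is chosen as the $\argmin$ over $\vy_j$ and $\vy_0 = \vxh_{k-1}$, and the latter has fixed-point error at most $\epsilon_{k-1} = \epsilon_k/\beta \leq \bar\epsilon$, the returned point has error at most $\max\{\epsilon, \bar\epsilon\}$. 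For the iteration count, every outer iteration that finishes without triggering the safeguard satisfies $(1-\lambda_k)\gamma < 1 - \beta'\lambda_k$, so the proof of \Cref{lemma:mild-expansion} (with contraction factor at most $1-\beta'\lambda_k$ per iteration) yields $\cO(1/(\beta'\lambda_k)) = \cO(D/(\beta\beta'\epsilon_k))$ inner iterations; summing geometrically until $\epsilon_k$ first drops below $\epsilon$ or below $\bar\epsilon$ (whichever comes first) produces the two terms of the stated min.

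The main obstacle is calibrating the safeguard precisely: it must stay silent throughout the success regime of \Cref{lemma:mild-expansion}, yet must trigger as soon as the target $\epsilon_k$ dips below the threshold permitted by $\gamma - 1$, so that no wasted iterations accumulate once the operator's expansion precludes further progress. The specific choice $\lambda_k = (\beta \epsilon_k/D)/(1 + \beta \epsilon_k/D)$ together with the comparison $(1-\lambda_k)\gamma$ versus $1 - \beta'\lambda_k$ is what makes this interplay tight, and the careful bookkeeping of the $\beta, \beta'$ constants is what produces the clean $\beta^2/(1-\beta')$ factor in the definition of $\bar\epsilon$.
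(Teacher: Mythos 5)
Your overall structure matches the paper's proof, and both Case~1's first and third terms and all of Case~2 are handled correctly (you reference \Cref{lemma:mild-expansion} where the paper uses \Cref{lemma:fixed-step-convergence} directly, but those are equivalent here). There is, however, a genuine gap in your treatment of the middle term of the nonexpansive bound.

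Your "burn-in phase of $\cO(\ln(1/\gamma))$ iterations followed by a tail of $\cO(\ln(\gamma\|\mT(\vx_0)-\vx_0\|/\epsilon)/\ln(1/\gamma))$ iterations" is a restatement of the target bound rather than an argument for it. The per-level bound from \Cref{lemma:mild-expansion}, $\lceil \ln(1/(\beta(1-\beta)))/(-\ln((1-\lambda_k)\gamma)) \rceil$, is at least $1$ for every outer level, and there are $K = \Theta(\ln(\epsilon_0/\epsilon))$ outer levels, so summing per-level costs can never produce a total below $K$. But the middle term of the $\min$ can be much smaller than $K$ when $\gamma$ is small, so a summation strategy based purely on the per-level ceiling cannot yield it. The missing mechanism is that the ceiling is loose: once $\bar\lambda_k := \lambda_k/(1-\lambda_k) \le \gamma$ (equivalently $\epsilon_k \le D\gamma/\beta$) and $\gamma < \beta/2$, a single call to \eqref{eq:main-iteration} drives the actual fixed-point error from $\|\mT(\vy_0)-\vy_0\|$ down to $(\lambda_k + (1-\lambda_k)\gamma)\|\mT(\vy_0)-\vy_0\| \le 2\gamma\|\mT(\vy_0)-\vy_0\|$, which can be far below the threshold $\epsilon_k = \beta\epsilon_{k-1}$. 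Consequently several subsequent outer levels can be traversed with \emph{zero} inner iterations (the entry condition $\|\mT(\vy_0)-\vy_0\| > \epsilon_k$ fails at once), so the number of oracle calls is governed by the $2\gamma$-contraction of the \emph{actual} error rather than by the $\beta$-schedule of the thresholds $\epsilon_k$. You need to track the true fixed-point error, not just the upper bound $\epsilon_k$, to make this precise.

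There is also an unstated hypothesis you would need: to reduce the burn-in count $\cO(\log_{1/\beta}(\epsilon_0\beta/(D\gamma)))$ and the tail count $\cO(\ln(D\gamma/\epsilon)/\ln(1/\gamma))$ to the clean form involving $\epsilon_0 = \|\mT(\vx_0)-\vx_0\|$ rather than $D$, the paper uses $\epsilon_0 \ge (1-\gamma)\|\vx_0-\vx_*\|$ together with the assumption $D = \cO(\|\vx_0-\vx_*\|)$, so that $D = \cO(\epsilon_0)$ when $\gamma < \beta/2$. Without that identification the middle term would retain an extra $\ln(\epsilon_0/D)$-type additive correction.
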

\begin{proof}
We carry out the proof by separately handling the cases in which the operator is nonexpansive and in which it is mildly expansive.

\noindent\textbf{Case 1: Nonexpansive operator.} 
We first consider the case in which $\gamma \in (0, 1]$ and observe that it is impossible for the algorithm to halt due to the condition in Line 8, since we established in the proof of \Cref{lemma:fixed-step-convergence} that for nonexpansive operators, \eqref{eq:main-iteration} must be contracting distances between successive iterates by a factor at most $1-\lambda_k$. 
    The claim $\|\mT(\vxh_k) - \vxh_k\| \leq \epsilon$ thus follows by the outer while loop condition---for the while loop to terminate and the algorithm to halt, this condition must hold. Because $\epsilon_k$ reduces by  a factor $\beta \in (0, 1)$ in each sequence of calls to \eqref{eq:main-iteration} within the inner while loop, there are at most $\lceil\log_{1/\beta}(\epsilon_0/(\beta\epsilon))\rceil$ iterations of the while loop. Using \Cref{lemma:fixed-step-convergence}, each iteration $k$ of the while loop makes at most $\Big\lceil \frac{\log_{1/\beta}(\|\mT(\vxh_{k-1}) - \vxh_{k-1}\|/(\beta \epsilon_k))}{\log_{1/\beta}(1/(1 - \lambda_k)) + \log_{1/\beta}(1/\gamma)} \Big\rceil = \cO(1 + \frac{1}{\epsilon_k/D + \ln(1/\gamma)}) = \cO(1 + \frac{1}{\epsilon_0 \beta^k/D + \ln(1/\gamma)})$ calls to \eqref{eq:main-iteration}, where we used that $\|\mT(\vxh_{k-1}) - \vxh_{k-1}\| \leq \epsilon_{k-1} = \epsilon_k/\beta$ and that $1/\beta$ is an absolute constant.  Hence the total number of calls to \eqref{eq:main-iteration}, each requiring a single oracle call to $\mT$, is
    \ifsiopt
    \begin{align*}
        &\cO\bigg(\sum_{k=1}^{\log_{1/\beta}(\epsilon_0/\epsilon)}\Big(1 + \frac{1}{\epsilon_0\beta^k/D + \ln(1/\gamma)}\Big) \bigg)\\ =\; & \cO\bigg(\min\Big\{\ln(\epsilon_0/\epsilon) \frac{ 1 + \ln(1/\gamma)}{\ln(1/\gamma)},\, \frac{D}{\epsilon_0} \Big(\frac{1}{\beta}\Big)^{\log_{1/\beta}(\epsilon_0/\epsilon)} \Big\}\bigg)\\
        = \; &\cO\bigg(\min\Big\{\ln(\epsilon_0/\epsilon) \frac{ 1 + \ln(1/\gamma)}{\ln(1/\gamma)},\, \frac{D}{\epsilon} \Big\}\bigg).
    \end{align*}
    \else
    \begin{align*}
        \cO\bigg(\sum_{k=1}^{\log_{1/\beta}(\epsilon_0/\epsilon)}\Big(1 + \frac{1}{\epsilon_0\beta^k/D + \ln(1/\gamma)}\Big) \bigg) &= \cO\bigg(\min\Big\{\ln(\epsilon_0/\epsilon) \frac{ 1 + \ln(1/\gamma)}{\ln(1/\gamma)},\, \frac{D}{\epsilon_0} \Big(\frac{1}{\beta}\Big)^{\log_{1/\beta}(\epsilon_0/\epsilon)} \Big\}\bigg)\\
        &= \cO\bigg(\min\Big\{\ln(\epsilon_0/\epsilon) \frac{ 1 + \ln(1/\gamma)}{\ln(1/\gamma)},\, \frac{D}{\epsilon} \Big\}\bigg).
    \end{align*}
    \fi

    Now consider $\gamma < \beta/2.$ Observe that, by the above argument, no iteration of the while loop can result in more than a constant number of iterations, since $\cO(1 + 1/\ln(1/\gamma))$ is bounded by a constant in this case. We claim that in this case, after an initial small number of while loop iterations, the inner while loop of \Cref{algo:grad-iterated-Halpern} makes only one call to \eqref{eq:main-iteration}, and, moreover, the contraction of the fixed point error quickly reaches the  order of $\gamma$. Let $\vy_0 = \vxh_{k-1},$ $\vy_1 = \lambda_k \vy_0 + (1 - \lambda_k)\mT(\vy_0).$ Then, rearranging the $\vy_1$ update equation, we get that
    \begin{align}\label{eq:first-it-rearranged}
        \vy_1 - \vy_0 = (1 - \lambda_k)(\mT(\vy_0) - \vy_0) & \Rightarrow \|\vy_1 - \vy_0\| \leq (1 - \lambda_k)\|\mT(\vy_0) - \vy_0\|,
    \end{align}
    and, further
    \begin{equation}\notag
        (1 - \lambda_k)(\vy_1 - \mT(\vy_1)) = \lambda_k(\vy_0 - \vy_1) + (1-\lambda_k)(\mT(\vy_0) - \mT(\vy_1)),
    \end{equation}
    which, using the triangle inequality, $\gamma$-contractivity of $\mT$, and \eqref{eq:first-it-rearranged}, implies
    \begin{align}
        \|\vy_1 - \mT(\vy_1)\| &\leq \Big(\frac{\lambda_k}{1 - \lambda_k} + \gamma\Big)\|\vy_1 - \vy_0\|\notag\\
        &\leq \big(\lambda_k + (1 - \lambda_k)\gamma\big) \|\mT(\vy_0) - \vy_0\|.
    \end{align}
    Now, since $\|\mT(\vy_0) - \vy_0\| \leq \epsilon_{k-1}$ and calls to \eqref{eq:main-iteration} in the inner while loop terminate as soon as $\|\vy_j - \mT(\vy_j)\| \leq \epsilon_k = \beta\epsilon_{k-1},$ a sufficient condition for the while loop to terminate in one iteration is that $\lambda_k + (1 - \lambda_k)\gamma \leq \beta.$ Equivalently, this holds if $\lambda_k \leq \frac{\beta - \gamma}{1 - \gamma}.$ More generally, we have that $\Bar{\lambda}_k := \frac{\lambda_k}{1-\lambda_k} \leq \gamma$ is sufficient to ensure that this while loop terminates in one iteration and $\lambda_k + (1 - \lambda_k)\gamma < 2\gamma.$ This means that once $\bar{\lambda}_k \leq \gamma$, each iteration of \Cref{algo:grad-iterated-Halpern} contracts the fixed point error by a factor at most $2 \gamma.$

    By the definition of $\lambda_k,$ we have that $\Bar{\lambda}_k := \frac{\lambda_k}{1-\lambda_k} = \frac{\beta \epsilon_k}{D}$, thus the condition $\bar{\lambda}_k \leq \gamma$ is equivalent to $\epsilon_k \leq D \gamma/\beta.$ The total number of iterations until this value of $\epsilon_k$ is reached (of which, recall, each makes a constant number of calls to \eqref{eq:main-iteration}) is $\cO(\log_{1/\beta}(\epsilon_0/\epsilon_k)) = \cO\big(\ln\big(\frac{\epsilon_0 \beta}{D \gamma}\big)\big).$ All subsequent iterations contract the fixed point error by a factor at most $2\gamma,$ so the remaining number of iterations until the algorithm halts is $\cO\big(\frac{\ln(D\gamma /\epsilon)}{\ln(1/\gamma)}\big).$ Observe here that, by the triangle inequality and $\gamma$-contractivity of $\mT,$ we can conclude that $\epsilon_0 = \|\vx_0 - \mT(\vx_0)\| \geq \|\vx_0 - \vx_*\| - \|\mT(\vx_0) - \mT(\vx_*)\| \geq (1 - \gamma)\|\vx_0 - \vx_*\|.$ Thus, if $D = \cO(\|\vx_0 - \vx_*\|),$ then we also have $D = \cO(\epsilon_0),$ and the iteration/oracle complexity in this case simplifies to 
    \begin{equation}\notag
        \cO\Big(\ln({1}/{\gamma}) + \frac{\ln(\gamma \epsilon_0/\epsilon)}{\ln(1/\gamma)}\Big).
    \end{equation}

    To complete the proof of the first case, it remains to recall that $\epsilon_0 = \|\mT(\vx_0) - \vx_0\|$ and observe that when $\gamma$ is close to one (but smaller than one), the first two terms in the oracle complexity bound are of the same order.

    \noindent\textbf{Case 2: Mildly expansive operator.} Suppose now that $\gamma \in (1, 2)$. Then it is possible for the algorithm to halt either due to the exit condition of the outer while loop or due to the safeguard condition in Line 8. If the algorithm halts because of the while loop exit condition, then it outputs an $\vxh_k$ with the fixed point error $\|\mT(\vxh_k) - \vxh_k\| \leq \epsilon.$ Moreover, because in this case the halting condition from Line 8 is never reached, we have that the \eqref{eq:main-iteration} iterations in the inner loop are always contracting distances by a factor at most $1-\beta' \lambda_k,$ and so we can carry out the same argument as in Case 1 to bound the total number of iterations by $\cO(\frac{D}{\epsilon \beta'}).$

    Now suppose that \Cref{algo:grad-iterated-Halpern} halts due to the safeguard condition in Line 8, and suppose that this happens in iteration $k$. The solution output by the algorithm, by the condition in Line 9, has fixed-point error no worse than $\|\mT(\vxh_{k-1}) - \vxh_{k-1}\| \leq \epsilon_{k-1}.$ Because, by \eqref{eq:main-iteration}, we have for $j \geq 1$ that
    \begin{equation*}
        \|\vy_{j+1} - \vy_j\| = (1-\lambda_k)\|\mT(\vy_j)- \mT(\vy_{j-1})\| \leq (1-\lambda_k)\gamma \|\vy_j - \vy_{j-1}\|,
    \end{equation*}
    it must be $(1-\lambda_k)\gamma > 1 - \beta' \lambda_k,$ or, equivalently, $\lambda_k < \frac{\gamma - 1}{\gamma - \beta'}$. Since $\lambda_k = \frac{\beta \epsilon_k/D}{1 + \beta \epsilon_k/D},$ we also have $\epsilon_k < \frac{D}{\beta}\frac{\gamma - 1}{1- \beta'}.$ Thus, the fixed-point error of the solution output by the algorithm is at most $\epsilon_{k-1} = \frac{\epsilon_k}{\beta} < \frac{D}{\beta^2}\frac{\gamma - 1}{1- \beta'} \leq \epsilon_0$. Since $\epsilon_k$ decreases by a factor $\beta$ in each outer while loop iteration, there are at most $\lceil \log_{1/\beta} (\frac{\epsilon_0}{\epsilon_k}) \rceil = \lceil \log_{1/\beta}(\frac{\epsilon_0}{D}\frac{\beta(1-\beta')}{\gamma-1})\rceil$  such iterations. Since each iteration makes at most $\cO(\frac{1}{\beta'\lambda_k}) = \cO(\frac{D}{\beta'\beta^k \epsilon_0})$ calls to \eqref{eq:main-iteration}, the total number of calls to \eqref{eq:main-iteration} (and, consequently, the total number of oracle calls to $\mT$) that \Cref{algo:grad-iterated-Halpern} makes is $\cO(\frac{\beta(1-\beta')}{\beta'(\gamma - 1)})$.
\end{proof}

A few remarks are in order here. First, we can observe that \Cref{algo:grad-iterated-Halpern} simultaneously recovers the optimal complexity bounds for nonexpansive and contractive operators in high dimensions, as long as $\gamma$ is not very close to zero, since in that case the complexity would be $\Theta\big(\frac{\ln(\|\mT(\vx_0) - \vx_0\|/\epsilon)}{\ln(1/\gamma)}\big)$ \cite{sikorski2009computational}. Further, because the step size parameter $\lambda_k$ is adjusted gradually, the algorithm can adapt its convergence speed if the operator happens to be locally contractive (but globally nonexpansive). This will also be shown numerically in \Cref{sec:num-exp}. 

Second, GHAL is guaranteed to converge to an approximate fixed-point of $\mT$ even if $\mT$ is not guaranteed to be non-expansive, but only $\gamma$-Lipschitz, with $\gamma \in (1, 1 + \|\mT(\vx_0) - \vx_0\|/D),$ where the approximation error scales with the diameter of the set $D$ and with $\gamma -1.$ Note here that the assumption that $\gamma < 1 + \|\mT(\vx_0 - \vx_0\|/D$ is to avoid trivialities. If $\gamma \geq 1 + \|\mT(\vx_0) - \vx_0\|/D$, then the initial solution has error $\|\mT(\vx_0) - \vx_0\| \leq (\gamma - 1)D.$ Since the algorithm cannot output a point with fixed-point error higher than the initial one, the error in this case is guaranteed to be at most  $(\gamma - 1)D \leq \ebar.$ The number of oracle queries is no higher than constant, following the same argument as in the proof of \Cref{prop:opt-it-complexity}. 

Finally, we note that for the purpose of the analysis, the algorithm is stated in a way that guarantees finite halting time when a solution with target error is reached. If the algorithm were to, instead, be called for a fixed number of iterations (as will be done in numerical examples in \Cref{sec:num-exp}), there is no need to halt early. Instead, one can simply replace the ``halt'' in Line 9 with a ``break'' for the inner while loop and revert to the previous value of $\lambda_{k-1}$ for the step size. Because Line 9 can only be reached for $\lambda_k < \frac{\gamma - 1}{\beta' - 1},$ we can retain the same approximation error guarantee for the algorithm output. 
%

\subsection{{Adaptive Gradual Halpern Algorithm}}\label{sec:AdaGHAL}

{In this subsection, we introduce the AdaGHAL algorithm (\Cref{algo:adaGHAL-rev}) and present our main results for general $\gamma$-Lipschitz operators. AdaGHAL adaptively estimates the value of $D$ and is thus fully parameter-free. Similar to GHAL, it adapts to the value of $\gamma$ while retaining optimal oracle complexity on nearly the entire range of its possible values. Our main result is stated in \Cref{thm:adaGHAL-Lipschitz}, while the rest of this section is dedicated to proving this theorem.}

\begin{algorithm}
\caption{{Adaptive Gradual Halpern Algorithm (AdaGHAL)}}\label{algo:adaGHAL-rev}
    \begin{algorithmic}[1]
    \Statex \textbf{Input}: $\epsilon > 0, \vx_0,$ $\beta \in (0, 1), \beta' \in (0, 1)$
     \Statex \textbf{Initialization}: $\epsilon_0{, D_0} \leftarrow \|\mT(\vx_0) - \vx_0\|,\, k \leftarrow 0,\, \vxh_0 \leftarrow \vx_0$
     \While{$\|\mT(\vxh_k) - \vxh_k\| > \epsilon$}
     \State $k \leftarrow k + 1$, $D_k = D_{k-1}$
     \State $\epsilon_k \leftarrow \beta \epsilon_{k-1}$
     \State $\lambda_k \leftarrow \frac{\beta  \epsilon_k/D_k}{1 + \beta \epsilon_k/D_k}$ \Comment{{Type~(1) update}}
     \State $\vy_0 \leftarrow \vxh_{k-1}$, $j \leftarrow 0$ \Comment{Initialize \eqref{eq:main-iteration}}
     \While{$\|\mT(\vy_j) - \vy_j\| > \epsilon_k$} \Comment{Run \eqref{eq:main-iteration} until target error is reached}
     \State $\vy_{j+1}\leftarrow \lambda_k \vy_0 + (1-\lambda_k)\mT(\vy_j)$
     \If{$j \geq 1$ and $\|\vy_{j+1} - \vy_j\| \geq (1-\beta'\lambda_k)\|\vy_j - \vy_{j-1}\|$} \Comment{Safeguard}
     \State Halt and return $\vxh_k = \argmin\{\|\mT(\vy_j) - \vy_j\|, \|\mT(\vy_0) - \vy_0\|\}$
     \EndIf
     \If{{$\max\{\norm{\vy_j - \vy_0}, \norm{\mT(\vy_j) - \vy_0}\} \leq D_k$}}
     \State {$j \leftarrow j + 1$}
    \Else 
     \State {$D_k \leftarrow D_k /\beta$}
     {\State $\lambda_k \leftarrow \frac{\beta  \epsilon_k/D_k}{1 + \beta \epsilon_k/D_k}$ \Comment{Type~(2) update}}
     {\State $\vy_0 \leftarrow \argmin\{\|\mT(\vy_{j+1}) - \vy_{j+1}\|, \|\mT(\vy_0) - \vy_0\|\}$, $j \leftarrow 0$}
     \EndIf
     \EndWhile
     \State $\vxh_k \leftarrow \vy_{j}$
     \EndWhile
     \State\Return $\vxh_{k}$
    \end{algorithmic}
\end{algorithm}

\begin{theorem}\label{thm:adaGHAL-Lipschitz}
    {Let $\mT$ be a $\gamma$-Lipschitz operator. Consider \Cref{algo:adaGHAL-rev} applied to $\mT$ for a target $\epsilon > 0,$ initialized at some $\vx_0$. Then all of the following statements apply, depending on the value of $\gamma:$
    \begin{enumerate}
        \item If $\gamma < 1,$ then the algorithm outputs $\vxh_k$ such that $\norm{\mT(\vxh_k) - \vxh_k} \leq \epsilon$ within the number of oracle queries to $\mT$ bounded by
        \[\cO\Big(\log_{1/\beta}\Big(\frac{1}{\gamma}\Big) + \frac{\ln(\beta(1-\beta))}{\ln(\gamma)} + \frac{\ln(\norm{\mT(\vx_0) - \vx_0}/\epsilon)}{\ln(1/\gamma)}\Big(1 + \frac{\ln(1-\beta)}{\ln(\beta)}\Big)\Big).\]
        \item If $\gamma = 1$ and $\vx_*$ is a fixed point of $\mT,$ then the algorithm outputs $\vxh_k$ such that $\norm{\mT(\vxh_k) - \vxh_k} \leq \epsilon$ within the number of oracle queries to $\mT$ bounded by
        \[
            \cO\Big(\frac{\norm{\vx_0 - \vx_*}}{\epsilon}\cdot \frac{-\ln(\beta(1-\beta))}{\beta^4}\Big).
        \]
        \item If $\gamma > 1$, then under \Cref{assp:compact-domain} and $\vx_0 \in \cC$,  \Cref{algo:adaGHAL-rev} outputs $\vxh_k$ such that $\norm{\mT(\vxh_k) - \vxh_k} \leq \min\{\epsilon_{k-1}/\beta, D\}=: \bar{\epsilon},$ where 
        \[\epsilon_{k-1} < (1+\beta^2)\frac{D_k}{\beta^2}\frac{\gamma - 1}{1- \beta'} \leq \frac{D(1+\beta^2)}{\beta^3}\frac{\gamma - 1}{1- \beta'}.\]
        If the algorithm outputs $\vxh_k$ with the fixed-point residual $\norm{\mT(\vxh_k) - \vxh_k} = \hat{\epsilon} \in (\beta \epsilon, \bar{\epsilon}],$ then the total number of oracle queries it utilizes is bounded by 
        \[\cO\Big(\frac{D}{\hat{\epsilon}}\cdot \frac{-\ln(\beta(1-\beta))}{\beta'\beta^4}\Big).\]
    \end{enumerate}
    }
\end{theorem}

A few remarks are in order here. {Suppose that $\beta \in (0, 1)$ is chosen so that both $1/\beta$ and $1/(1-\beta)$ can be treated as universal constants.} First, we can observe that {\Cref{algo:adaGHAL-rev}} simultaneously recovers the optimal complexity bounds for nonexpansive and contractive operators in high dimensions, as long as $\gamma$ is not very close to zero {(namely, as long as $\gamma = \Omega(e^{-\sqrt{\ln(\norm{\mT(\vx_0) - \vx_0}/\epsilon)}})$)}. Further, because the step size parameter $\lambda_k$ is adjusted gradually, the algorithm can adapt its convergence speed if the operator happens to be locally contractive (but globally nonexpansive). %
Second, AdaGHAL is guaranteed to converge to an approximate fixed-point of $\mT$ even if $\gamma > 1,$ where for $\gamma$ close to one, the worst-case approximation error scales with the diameter of the set $D$ and with $\gamma -1,$ matching (up to constants), the guarantee of \eqref{eq:main-iteration} with perfect knowledge of the problem parameters. 
Finally, we note that for the purpose of the analysis, the algorithm is stated in a way that guarantees finite halting time when a solution with target error is reached. If the algorithm were to, instead, be called for a fixed number of iterations, there is no need to halt early. Instead, one can simply replace the ``halt'' in Line 9 with a ``break'' for the inner while loop and revert to a larger value of $\lambda_{k}$ for the step size (e.g., reverting to $\lambda_{k-1}$ or even $c \lambda_k$ for some constant $c > 1$). Because Line 9 can only be reached for $\lambda_k < \frac{\gamma - 1}{\gamma - \beta'},$ we can retain the same approximation error guarantee for the algorithm output. 

\paragraph{{Bounding contributions of Type~(1) and Type~(2) updates}}
{To bound the number of oracle queries to $\mT$ that AdaGHAL makes, it suffices to bound  the total number of inner while loop iterations in \Cref{algo:adaGHAL-rev} (which is the same as the number of \eqref{eq:main-iteration} calls, up to a constant factor). To do so, let us use $(k, j)$ to index the outer loop/inner loop iteration pairs. Let $\cS$ denote the ordered set containing (as an ordered sequence, in order by occurrence) $(k, j)$ in which updates to $\lambda_k$ occur (see Lines 4 and 14), where $(k, 0)$ indexes the Type~(1) update occurring in Line 4, before the inner while loop is entered. We begin by bounding the number of iterations between subsequent elements in $\cS$. For notational convenience, define $\rho_{k} \in (0, 1)$ as: $\rho_k = \gamma(1-\lambda_k)$ if $\gamma \leq 1$ and $\rho_k = 1 - \beta'\lambda_k$ otherwise.}
{
\begin{lemma}\label{lemma:subsequent-type-updates}
    Let $\mT$ be a $\gamma$-Lipschitz operator. Consider \Cref{algo:adaGHAL-rev}, and let the set $\cS$ and contraction factor $\rho_k$ be defined as in the preceding text. Let $(k, j)$ be any elements in $\cS.$ Then, the total number of inner while loop iterations between  $(k, j)$ and either the subsequent update to $\lambda_k$ or the algorithm halting is at most $J = \big\lceil \frac{\ln(\beta(1-\beta))}{\ln(\rho_{k})}\big\rceil.$
\end{lemma}}
\begin{proof}
    {First, observe that because the safeguard halts the algorithm, it cannot occur between elements in $\cS$. Suppose first that $j = 0$ (i.e., $(k, j)$ indexes a Type~(1) update). Consider the while loop iterations $j$ until  either the next update to $\lambda_k$ (in $(k, j)$) occurs, or the algorithm halts. In all these iterations, there is no update to either $\lambda_k$ or $D_k.$   
    For each while loop iteration $j,$ using \Cref{lemma:fixed-step-convergence} and the safeguard condition, we have $\norm{\vy_{j+1} - \vy_j} \leq (1-\lambda_{k})\rho_k^j \norm{\mT(\vy_0) - \vy_0}$, and, as a consequence, by the same argument as in \Cref{lemma:fixed-step-convergence}: $\norm{\mT(\vy_j) - \vy_j} \leq \rho_k^j \norm{\mT(\vy_0) - \vy_0} + \frac{\lambda_k}{1-\lambda_k}\norm{\vy_j - \vy_0}.$ By the inner loop condition, we have $\norm{\mT(\vy_0) - \vy_0}  \leq \epsilon_{k-1} = \epsilon_k/\beta$. Since there was no Type~(2) update up to iteration $j,$  it must be $\norm{\vy_j - \vy_0} \leq D_k$ and so $\frac{\lambda_k}{1-\lambda_k}\norm{\vy_j - \vy_0} \leq \beta \epsilon_k.$ By the inner while loop exit condition and the definition of $\vy_0$, we have $\norm{\mT(\vy_0) - \vy_0} \leq \epsilon_{k-1} = \epsilon_k/\beta.$ Hence:
    \[
        \norm{\mT(\vy_j) - \vy_j} \leq \epsilon_k ((1/\beta)\rho^j  + \beta).
    \]
    If $j \geq J$, then $\rho^j \leq \beta(1-\beta),$ implying $\norm{\mT(\vy_j) - \vy_j} \leq \epsilon_k,$ which triggers the inner while loop exit condition upon which either a Type~(1) update must occur in the outer while loop or the outer while loop (and thus the algorithm) halts. Thus, either an update to $\lambda_k$ or algorithm halting must occur within $J$ iterations.}

    {Now suppose $j > 0$, in which case $(k, j)$ indexes a Type~(2) update. Observe that on a Type~(2) update,  \eqref{eq:main-iteration} is effectively restarted (Line~15) ensuring that the new fixed-point residual satisfies $\norm{\mT(\vy_j) - \vy_j} \leq \norm{\mT(\vxh_{k-1}) - \vxh_{k-1}} \leq \epsilon_{k-1}.$ Now the same argument as above applies: the next update to $\lambda_k$ (either Type~(1) or Type~(2)) or algorithm halting must occur within $J$ iterations, since otherwise the inner while loop exit condition would be reached, leading to the Type~(1) update or a halt. }
\end{proof}

{We now bound the total number of Type~(1) and Type~(2) updates, as follows.}
{
\begin{lemma}\label{lemma:Type1Type2-updates-count}
    Given a $\gamma$-Lipschitz operator $\mT: \cC \to \cC$, for $\cC$   a bounded closed convex set of diameter $D$, and given $\vx_0 \in \cC$, \Cref{algo:adaGHAL-rev} makes at most: $I_1 = \big\lceil \log_{\frac{1}{\beta}}\frac{\norm{\mT(\vx_0) - \vx_0}}{\beta \epsilon} \big\rceil$ Type~(1) updates and $I_2 = \big\lceil \log_{\frac{1}{\beta}}\frac{D}{\beta \norm{\mT(\vx_0) - \vx_0}} \big\rceil$ Type~(2) updates.
\end{lemma}}
\begin{proof}
   {Every Type~(1) update coincides with a decrease in $\epsilon_k$ by a factor $\beta \in (0, 1).$ Since $\epsilon_k$ is never increased, it is initialized at $\epsilon_0 = \norm{\mT(\vx_0) - \vx_0}$, and the algorithm halts for $\epsilon_k \leq \epsilon,$ the total number of times it can get decreased by a factor $\beta$ is at most $\lceil \log_{1/\beta}(\frac{\norm{\mT(\vx_0) - \vx_0}}{\beta \epsilon}) \rceil,$ which is precisely the claimed $I_1$.}

    {Consider now all the Type~(2) updates. First, observe that since the algorithm only updates its iterates ($\vy_j$ or $\vxh_k$) as convex combinations of prior iterates and their mappings via $\mT(\cdot),$ by the lemma assumptions, all iterates must belong to $\cC.$  Each Type~(2) update increases the estimate $D_k$ by a factor $1/\beta > 1.$ Since $D_k$ is initialized at $D_0 = \norm{\mT(\vx_0) - \vx_0},$ it is never decreased, and $D_k \leq D/\beta$ (because all iterates remain in $\cC$), the total number of times that $D_k$ can be increased by a factor $(1/\beta)$ is at most $\lceil \log_{1/\beta}(\frac{D}{\beta \norm{\mT(\vx_0) - \vx_0}}) \rceil$. Since each Type~(2) update coincides with an update to $D_k$, the same bound applies to the maximum number of Type~(2) updates. }
\end{proof}

\paragraph{{Remaining Proof of \Cref{thm:adaGHAL-Lipschitz}}}
{
We are now ready to prove our main theorem for general $\gamma$-Lipschitz operators.}
\begin{proof}[{Proof of \Cref{thm:adaGHAL-Lipschitz}}]
   {Let us first quickly argue that \Cref{lemma:Type1Type2-updates-count} applies under either regime of $\gamma$, that is, that $\mT: \cC \to \cC$, where $\cC$  is a bounded closed convex set of diameter $D < \infty$, and that $\vx_0 \in \cC$. For $\gamma > 1,$ this is immediate, due to \Cref{assp:compact-domain} and the theorem assumption. When $\gamma \leq 1,$ this claim holds for $\cC = \{\vx: \norm{\vx - \vx_*} \leq \norm{\vx_0 - \vx_*}\},$ as $\vx_0 \in \cC$ and any new algorithm iterate $\vx^+$ is constructed as $\vx^+ = \lambda \vx + (1-\lambda)\mT(\vy)$ where both $\vx, \vy$ are prior algorithm iterates and $\lambda \in (0, 1)$. Thus, $\norm{\vx^+ - \vx_*} \leq \lambda\norm{\vx - \vx_*} + (1-\lambda)\norm{\mT(\vy) - \mT(\vx_*)} \leq \lambda\norm{\vx - \vx_*} + (1-\lambda)\norm{\vy - \vx_*},$ where we used the triangle inequality, $\mT(\vx_*) = \vx_*$, and $\gamma \leq 1.$ Hence it inductively follows that all iterates remain in $\cC.$ Observe that in this case $D = 2\norm{\vx_0 - \vx_*}.$ Further, $D_k \leq D/\beta$, for any algorithm iterate $k \geq 0.$}

    {Suppose first that the algorithm never enters the safeguard update. Certainly, for $\gamma \leq 1,$ this is always true, as for inner iteration $j \geq 1$ and outer iteration $k \geq 1$, $\norm{\vy_{j+1} - \vy_j} = (1 - \lambda_k)\norm{\mT(\vy_j) - \mT(\vy_{j-1})} \leq (1-\lambda_k) \norm{\vy_j - \vy_{j-1}}.$ Then if the algorithm halts, it must return $\vxh_k$ with $\norm{\mT(\vxh_k) - \vxh_k} \leq \epsilon$, so all we need to argue is that this happens within the claimed number of iterations/oracle queries to $\mT.$}

    {Observe that the first update to $\lambda_k$ occurs in the outer loop (Type~(1) update), before the inner while loop is entered for the first time. Partition the set $\cS$ into ordered sets $\cS_1 := \{(k, j) \in \cS: j = 0\}$ and $\cS_2 = \{(k, j) \in \cS: j \geq 1\},$ respecting the order in $\cS.$  That is, $\cS_1$ contains all the $(k, j)$ indexing Type~(1) updates, while $\cS_2$ contains all the $(k, j)$ indexing Type~(2) updates. Consider first the elements of $\cS_1.$ By \Cref{lemma:Type1Type2-updates-count}, $|\cS_1| \leq \big\lceil \log_{1/\beta}(\frac{\norm{\mT(\vx_0) - \vx_0}}{\beta \epsilon}) \big\rceil$. Let $(k, 0)$ be the $i^{\mathrm{th}}$ element of $\cS_1$. Because $\epsilon_k$ is only updated when a Type~(1) update to $\lambda_k$ occurs, we have that $\epsilon_k = \epsilon_0 \beta^i$ and so $\lambda_k = \frac{(\epsilon_0/D_k) \beta^{i+1}}{1 + (\epsilon_0/D_k) \beta^{i+1}}.$ By \Cref{lemma:subsequent-type-updates}, the total number of iterations until either a next update to $\lambda_k$ or a halting condition is reached is at most $\lceil \frac{\ln(\beta(1-\beta))}{\ln(\rho_k)} \rceil,$ where $\rho_k = \gamma(1-\lambda_k)$ if $\gamma \leq 1$ and $\rho_k  = 1-\beta' \lambda_k$ if $\gamma >1$. Call these iterations the $i^{\mathrm{th}}$ Type~(1) interval, and let $J_1$ denote the sum of the lengths of all Type~(1) intervals. We now bound $J_1$ case-by-case, depending on the value of $\gamma.$}

{
 \noindent\textbf{Case 1: $\gamma <1$.} In this case, $\rho \leq \gamma,$ and we have
    \begin{equation}\label{eq:J1-contractive-gen}
        J_1 \leq \Big\lceil\frac{\ln(\beta(1-\beta))}{\ln(\gamma)}\Big\rceil I_1 \leq \Big(\frac{\ln(\beta(1-\beta))}{\ln(\gamma)} + 1\Big)\Big(\frac{\ln(\norm{\mT(\vx_0) - \vx_0}/\epsilon)}{\ln(1/\beta)} + 1\Big).
    \end{equation}
If $\gamma \geq \beta(1-\beta),$ then $\frac{\ln(\beta(1-\beta))}{\ln(\gamma)} \geq 1,$ and so 
\begin{equation}\label{eq:J1-contractive-large-gamma}
J_1 = \cO\Big(\frac{\ln(\beta(1-\beta))}{\ln(\gamma)} + \frac{\ln(\norm{\mT(\vx_0) - \vx_0}/\epsilon)}{\ln(1/\gamma)}\Big(1 + \frac{\ln(1-\beta)}{\ln(\beta)}\Big)\Big).
\end{equation}
Observe that for $\beta \in (0, 1)$, chosen so that both $1/\beta$ and $1/(1-\beta)$ can be treated as universal constants, independent of any problem parameters, the above bound simplifies to $J_1 = \cO\big(\frac{\ln(\norm{\mT(\vx_0) - \vx_0}/\epsilon)}{\ln(1/\gamma)}\big).$}

{
Now suppose $\gamma < \beta(1-\beta).$ Let $\vy_0 = \vxh_{k-1},$ $\vy_1 = \lambda_k \vy_0 + (1 - \lambda_k)\mT(\vy_0).$ Then, rearranging the $\vy_1$ update equation, we get that
    \begin{align}\label{eq:first-it-rearranged-ada}
        \vy_1 - \vy_0 = (1 - \lambda_k)(\mT(\vy_0) - \vy_0) & \Rightarrow \|\vy_1 - \vy_0\| \leq (1 - \lambda_k)\|\mT(\vy_0) - \vy_0\|,
    \end{align}
    and, further,  
    \(
        (1 - \lambda_k)(\vy_1 - \mT(\vy_1)) = \lambda_k(\vy_0 - \vy_1) + (1-\lambda_k)(\mT(\vy_0) - \mT(\vy_1)),
    \)
    which, using the triangle inequality, $\gamma$-contractivity of $\mT$, and \eqref{eq:first-it-rearranged-ada}, implies
    \begin{align}\notag
        \|\vy_1 - \mT(\vy_1)\| &\leq \Big(\frac{\lambda_k}{1 - \lambda_k} + \gamma\Big)\|\vy_1 - \vy_0\| 
        \leq \big(\lambda_k + (1 - \lambda_k)\gamma\big) \|\mT(\vy_0) - \vy_0\|.
    \end{align}
    Now, since $\|\mT(\vy_0) - \vy_0\| \leq \epsilon_{k-1}$ and calls to \eqref{eq:main-iteration} in the inner while loop terminate as soon as $\|\vy_j - \mT(\vy_j)\| \leq \epsilon_k = \beta\epsilon_{k-1},$ a sufficient condition for the while loop to terminate in one iteration is that $\lambda_k + (1 - \lambda_k)\gamma \leq \beta.$ Since $\lambda_k \leq \lambda_1 \leq \beta^2$ and $\gamma < \beta(1-\beta),$ this condition is always satisfied, and so the inner while loop always halts within one iteration. Further, if $\epsilon_k \leq \gamma \epsilon_0$ then also  $\lambda_k \leq \beta \gamma.$ This happens after at most $i = \lceil \log_{1/\beta}(1/\gamma)\rceil$ Type~(1) updates, at which point the effective contraction factor is $\lambda_k + (1 - \lambda_k)\gamma \leq (1+\beta)\gamma.$ Thus, the total number of iterations $J_1$ %
    is
    \begin{equation}\label{eq:J1-contractive-small-gamma}
        J_1 
        = \cO\Big(\log_{1/\beta}(1/\gamma) + \frac{\ln(\gamma \norm{\mT(\vx_0) - \vx_0}/\epsilon)}{\ln(1/\gamma)}\Big). 
    \end{equation}
To finish bounding $J_1$ for $\gamma < 1,$ it remains to combine the bounds for ``large'' and ``small'' $\gamma$, namely, \eqref{eq:J1-contractive-large-gamma} and \eqref{eq:J1-contractive-small-gamma}, and simplify. This leads to:
\begin{equation}\label{eq:J1-final-gamma-bound}
    J_1 = \cO\Big(\log_{1/\beta}\Big(\frac{1}{\gamma}\Big) + \frac{\ln(\beta(1-\beta))}{\ln(\gamma)} + \frac{\ln(\norm{\mT(\vx_0) - \vx_0}/\epsilon)}{\ln(1/\gamma)}\Big(1 + \frac{\ln(1-\beta)}{\ln(\beta)}\Big)\Big).
\end{equation}
}
{\noindent\textbf{Case 2: $\gamma =1$.} If $\gamma = 1,$ then $\rho_k = 1-\lambda_k = \frac{1}{1 + (\epsilon_0/D_k)\beta^{i+1}}\leq \frac{1}{1 + (\epsilon_0/D)\beta^{i+2}},$ and so
    \begin{equation}\label{eq:J1-nonexpansive}
    \begin{aligned}
        J_1 &\leq \sum_{i=1}^{I_1} \Big\lceil\frac{-\ln(\beta(1-\beta))}{\ln(1 + (\epsilon_0/D)\beta^{i+2})}\Big\rceil = \cO\Big(\frac{-\ln(\beta(1-\beta))}{\beta^3}\frac{D}{\epsilon_0}\frac{\norm{\mT(\vx_0) - \vx_0}}{\beta \epsilon}\Big)\\
        &= \cO\Big(\frac{\norm{\vx_0 - \vx_*}}{\epsilon}\cdot \frac{-\ln(\beta(1-\beta))}{\beta^4}\Big). 
    \end{aligned}
    \end{equation}
 \noindent\textbf{Case 3: $\gamma > 1$.} If $\gamma >1$, then \[\rho_k = 1 - \beta' \lambda_k \leq \frac{1}{1 + \beta'(\epsilon_0/D_k)\beta^{i+1}/(1+\beta)}\leq \frac{1}{1 + \beta'(\epsilon_0/D)\beta^{i+2}/(1+\beta)},\] and so using the same derivation as for $\gamma = 1$ leads to
    \begin{equation}\label{eq:J1-mildly-expansive}
    \begin{aligned}
        J_1 = \cO\Big(\frac{D}{\epsilon}\cdot \frac{-\ln(\beta(1-\beta))}{\beta'\beta^4}\Big). 
    \end{aligned}
    \end{equation}
}
 
    {Our next step is to bound the cumulative number of iterations occurring in intervals beginning with a Type~(2) update end ending with either a subsequent update to $\lambda_k$ or the algorithm halt, whichever happens first. Call those intervals Type~(2) intervals and let $J_2$ denote the cumulative number of iterations within the union of those intervals. Fix first some $(k, j) \in \cS_2,$ and let it be the $i^{\mathrm{th}}$ element of $\cS_2.$ As the updates to $D_k$ only happen on Type~(2) updates, we have that immediately upon the update, $D_k = D_0/\beta^{i} = \epsilon_0/\beta^i.$ Further, since for any $k,$ $\epsilon_k \geq \beta \epsilon$ (otherwise the algorithm would have halted), we have \(\lambda_k = \frac{\beta \epsilon_k/D_k}{1 + \beta \epsilon_k/D_k} \geq \frac{\beta^{i+2}\epsilon/\epsilon_0}{1 + \beta^{i+2}\epsilon/\epsilon_0}\). We now similarly use \Cref{lemma:subsequent-type-updates,lemma:Type1Type2-updates-count} to obtain the bound on $J_2$, considering the three cases for $\gamma.$}

    \noindent{\textbf{Case 1: $\gamma < 1$.} In this case, $\rho_k < \gamma,$ and so the length of the $i^{\mathrm{th}}$ Type~(2) interval is at most $\lceil \frac{\ln(\beta(1-\beta))}{\ln(\gamma)} \rceil.$ Further, by the triangle inequality and $\gamma$-contractivity, we have $D_0 = \norm{\mT(\vx_0) - \vx_0} \geq (1- \gamma)\norm{\vx_0 - \vx_*} \geq \frac{1-\gamma}{2}D.$ Thus, by \Cref{lemma:Type1Type2-updates-count}, the total number of Type~(2) intervals is $\cO(\log_{1/\beta}(\frac{2}{1-\gamma})),$ and so
    \begin{equation}\label{eq:J2-gamma<1}
        J_2 = \cO\Big(\log_{1/\beta}\Big(\frac{2}{1-\gamma}\Big)\Big(1 + \frac{\ln(\beta(1-\beta))}{\ln(\gamma)}\Big)\Big).
    \end{equation}
    \noindent\textbf{Case 2: $\gamma = 1$.} In this case, $\rho_k = 1- \lambda_k \leq \frac{1}{1 + \beta^{i+2}\epsilon/\epsilon_0}$, and the length of the $i^{\mathrm{th}}$ Type~(2) interval is $\cO(\frac{-\ln(\beta(1-\beta))}{\beta^{i+2}\epsilon/\epsilon_0}).$ By \Cref{lemma:Type1Type2-updates-count}, the total number of Type~(2) intervals is at most $I_2 = \big\lceil \log_{1/\beta}(\frac{D}{\beta \norm{\mT(\vx_0) - \vx_0}}) \big\rceil$. Thus:
    \begin{equation}\label{eq:J2-gamma=1}
        \begin{aligned}
            J_2 &= \sum_{i=1}^{I_2} \cO\Big(\frac{-\ln(\beta(1-\beta))}{\beta^{i+2}\epsilon/\epsilon_0}\Big) = \cO\Big(\frac{\norm{\vx_0 - \vx_*}}{\epsilon}\cdot \frac{-\ln(\beta(1-\beta))}{\beta^4}\Big).
        \end{aligned}
    \end{equation}
    \noindent\textbf{Case 3: $\gamma > 1$.} In this case, $\rho_k = 1 - \beta' \lambda_k \leq \frac{1}{1 + \beta'(\epsilon_0/D)\beta^{i+2}/(1+\beta)},$ and so the same derivation as for Case 2 leads to:
    \begin{equation}\label{eq:J2-gamma>1}
    \begin{aligned}
        J_2 = \cO\Big(\frac{D}{\epsilon}\cdot \frac{-\ln(\beta(1-\beta))}{\beta'\beta^4}\Big). 
    \end{aligned}
    \end{equation}
    }

    {To complete the proof, it remains to consider the case in which the algorithm halts due to the safeguard condition, which, as discussed at the beginning, can only happen for $\gamma > 1.$ Suppose that this happens in outer loop iteration $k$. Observe that the algorithm can never return a solution with error worse than $\norm{\mT(\vx_0) - \vx_0} \leq D,$ as the algorithm never increases the fixed-point residual of candidate output points $\vxh_k.$ The solution output by the algorithm, by the condition in Line 9, has fixed-point residual no worse than $\|\mT(\vxh_{k-1}) - \vxh_{k-1}\| \leq \epsilon_{k-1}.$ Because, by \eqref{eq:main-iteration}, we have for $j \geq 1$ that
    \(
        \|\vy_{j+1} - \vy_j\| = (1-\lambda_k)\|\mT(\vy_j)- \mT(\vy_{j-1})\| \leq (1-\lambda_k)\gamma \|\vy_j - \vy_{j-1}\|,
    \)
    it must be $(1-\lambda_k)\gamma > 1 - \beta' \lambda_k,$ or, equivalently, $\lambda_k < \frac{\gamma - 1}{\gamma - \beta'}$. Since $\lambda_k = \frac{\beta \epsilon_k/D_k}{1 + \beta \epsilon_k/D_k}$ and $1 + \beta \epsilon_k/D_k \leq 1 + \beta^{k+1}\epsilon_0/D_0 \leq 1 + \beta^2$, we also have $\epsilon_k < (1+\beta^2)\frac{D}{\beta}\frac{\gamma - 1}{1- \beta'}.$ Thus, the fixed-point residual of the solution output by the algorithm is at most $\epsilon_{k-1} = \frac{\epsilon_k}{\beta} < (1+\beta^2)\frac{D_k}{\beta^2}\frac{\gamma - 1}{1- \beta'} \leq \frac{D(1+\beta^2)}{\beta^3}\frac{\gamma - 1}{1- \beta'}$, since $D_k \leq D /\beta$. Thus, we have argued that the output solution has fixed-point residual no larger than $\ebar.$}

    {Finally, it remains to observe that, if the safeguard condition is triggered at the output fixed-point residual $\hat{\epsilon} \in (\beta\epsilon, \ebar]$, then by the earlier argument in this proof, the total number of iterations/oracle queries to $\mT$ until this can happen is at most $\cO\big(\frac{D}{\hat{\epsilon}}\cdot \frac{-\ln(\beta(1-\beta))}{\beta'\beta^4}\big),$ which concludes the proof. }
\end{proof}
%


\subsection{Comparison to Restarting-Based Approaches}\label{sec:restart}

We briefly discuss here how our results compare to restarting-based strategies as in prior work \cite{diakonikolas2020halpern} that sought to obtain an algorithm that simultaneously addresses cases where $\mT$ can be nonexpansive or contractive, adapting to the better between the sublinear convergence rate (of Halpern iteration) and linear convergence rate (of Halpern iteration). While the results in \cite{diakonikolas2020halpern} apply only to Euclidean spaces, the underlying restarting idea can be applied more broadly to the classical Halpern iteration and any related algorithm that leads to results of the form\footnote{Here we do not explicitly discuss the Krasnosel'skii-Mann iteration, as its (tight) convergence rate is $1/\sqrt{k}$ \cite{baillion1996rate,cominetti2014rate,diakonikolas2021potential}, which is slower than the rate $1/k$ of Halpern iteration and leads to worse complexity results.}
\begin{equation}\label{eq:generic-halpern-bnd}
    \|\mT(\vx_k) - \vx_k\| = \cO\Big(\frac{\|\vx_0 - \vx_*\|}{k}\Big),
\end{equation}
after running the algorithm for $k \geq 1$ iterations, where the Lipschitz constant $\gamma$ of $\mT$ satisfies $\gamma \leq 1$ and $\vx_*$ is a fixed point of $\mT$ (assuming it exists). 

When $\gamma < 1,$ since $\mT(\vx_*) = \vx_*$, we also have, by the triangle inequality, that $\|\vx - \vx_*\| \leq \|\vx - \mT(\vx)\| + \|\mT(\vx) - \mT(\vx_*)\| \leq \|\vx - \mT(\vx)\| + \gamma\|\vx - \vx_*\|$ and so $\|\vx - \vx_*\| \leq \frac{\|\vx - \mT(\vx)\|}{1-\gamma}$. As a consequence, the bound in \eqref{eq:generic-halpern-bnd} implies $ \|\mT(\vx_k) - \vx_k\| = \cO\Big(\frac{\|\vx_0 - \mT(\vx_0)\|}{(1-\gamma)k}\Big)$, and so the fixed-point error $\|\mT(\vx_k) - \vx_k\|$ can be halved within $k = \cO(1/(1-\gamma))$ iterations. Thus, by restarting the algorithm each time the fixed-point error halves, we get an algorithm that converges to a fixed-point error $\epsilon > 0$ within $\cO\big(\frac{\ln(\|\mT(\vx_0 - \vx_0\|/\epsilon)}{1-\gamma}\big)$ iterations. When $\gamma$ is close to one, $\ln(1/\gamma) \approx 1 - \gamma$ and this bound matches the optimal iteration complexity $\cO\big(\frac{\ln(\|\mT(\vx_0 - \vx_0\|/\epsilon)}{\ln(1/\gamma)}\big)$ of Picard iteration. However, when $\gamma$ is small, this bound is suboptimal, as $\ln(1/\gamma)$ can be much larger than $1 - \gamma.$ Observe that the oracle complexity of GHAL and AdaGHAL in this case is $\cO\big(\ln(1/\gamma) + \frac{\ln(\gamma\|\mT(\vx_0 - \vx_0\|/\epsilon)}{\ln(1/\gamma)}\big)$, which is lower than $\cO\big(\frac{\ln(\|\mT(\vx_0 - \vx_0\|/\epsilon)}{1-\gamma}\big)$ whenever $\gamma$ is small and closer to the optimal iteration complexity of Picard iteration, matching it up to constant factors whenever $\gamma$ is not trivially small. This complexity advantage is clearly observed in the numerical examples in \Cref{sec:num-exp}.

We remark here that (as also observed in \cite{diakonikolas2020halpern}) the oracle complexity bound of $\cO\big(\frac{\ln(\|\mT(\vx_0 - \vx_0\|/\epsilon)}{\mu}\big)$ for restart-based strategies applies even for operators that are not contractive, but instead are nonexpansive and satisfy a local error bound of the form $\|\mT(\vx) - \vx\| \geq \mu\, \dist(\vx, \cX_*)$, where $\cX_*$ denotes the set of fixed points of $\mT$ and $\dist(\vx, \cX_*) = \inf_{\vy \in \cX_*}\|\vx - \vy\|$. In this case, it can be argued that restarted \eqref{eq:main-iteration} with an appropriate choice of the step size $\lambda$ converges linearly. 
\begin{lemma}\label{lemma:LEB-convergence}
    Suppose that $\mT$ is a nonexpansive operator that satisfies $\|\mT(\vx) - \vx\| \geq \mu\, \dist(\vx, \cX_*)$, where $\cX_*$ denotes the set of fixed points of $\mT$ (assuming $\cX_*$ is non-empty). Let $\vx_k$ denote the $k^{\rm th}$ iterate of \eqref{eq:main-iteration} applied to $\mT,$ initialized at some $\vx_0$ and using some step size $\lambda \in (0, 1)$. Let $\beta \in (0, 1).$ If $\lambda = \frac{\mu\beta/4}{1 + \mu\beta/4},$ then $\|\mT(\vx_k) - \vx_k\| \leq \beta\|\mT(\vx_0) - \vx_0\|$ after at most $k = \lceil \frac{\ln(2/\beta)}{\ln(1/(1-\lambda))} \rceil$ iterations. As a result, choosing $\beta = 1/2$ and restarting \eqref{eq:main-iteration} each time the fixed-point error is halved, we have that $\|\mT(\vx_k) - \vx_k\| \leq \epsilon$ using a total of $\cO\big(\frac{\ln(\|\mT(\vx_0) - \vx_0\|/\epsilon)}{\mu}\big)$ oracle queries to $\mT$ (equivalently, iterations of \eqref{eq:main-iteration}).
\end{lemma}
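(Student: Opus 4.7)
The plan is to derive Lemma~\ref{lemma:LEB-convergence} as a direct consequence of Lemma~\ref{lemma:fixed-step-convergence}, using the local error bound to trade the distance-to-fixed-point term for a multiple of the initial fixed-point error. Specifically, since $\mT$ is nonexpansive ($\gamma=1$), Lemma~\ref{lemma:fixed-step-convergence} gives, for any fixed point $\vx_* \in \cX_*$,
\[
    \|\mT(\vx_k)-\vx_k\| \leq (1-\lambda)^k \|\mT(\vx_0)-\vx_0\| + \frac{2\lambda}{1-\lambda}\|\vx_0 - \vx_*\|.
\]
Since this holds for any $\vx_*\in \cX_*$, I would take the infimum over $\cX_*$ on the right-hand side (or, if the infimum is not attained, pass to the limit through a sequence of fixed points $\vx_*^{(n)}$ with $\|\vx_0 - \vx_*^{(n)}\| \to \dist(\vx_0,\cX_*)$), obtaining
\[
    \|\mT(\vx_k)-\vx_k\| \leq (1-\lambda)^k \|\mT(\vx_0)-\vx_0\| + \frac{2\lambda}{1-\lambda}\,\dist(\vx_0,\cX_*).
\]
Applying the local error bound $\dist(\vx_0,\cX_*) \leq \|\mT(\vx_0)-\vx_0\|/\mu$ yields
\[
    \|\mT(\vx_k)-\vx_k\| \leq \Big((1-\lambda)^k + \frac{2\lambda}{(1-\lambda)\mu}\Big)\|\mT(\vx_0)-\vx_0\|.
\]

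Next I would verify the two sufficient conditions that together guarantee the ratio is at most $\beta$. For the second (steady-state) term, requiring $\frac{2\lambda}{(1-\lambda)\mu} \leq \beta/2$ is equivalent to $\frac{\lambda}{1-\lambda} \leq \mu\beta/4$, which holds precisely under the lemma's assumption $\lambda \leq \frac{\mu\beta/4}{1+\mu\beta/4}$ (a direct algebraic manipulation). For the first (transient) term, requiring $(1-\lambda)^k \leq \beta/2$ gives the stated iteration count $k \geq \ln(2/\beta)/\ln(1/(1-\lambda))$, so $k = \lceil \ln(2/\beta)/\ln(1/(1-\lambda))\rceil$ suffices.

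For the restarted version with $\beta = 1/2$, I would run the above procedure in epochs, treating the last iterate of each epoch as the initial point of the next. Since each epoch halves the fixed-point error, after $N = \lceil \log_2(\|\mT(\vx_0)-\vx_0\|/\epsilon)\rceil$ epochs the target accuracy $\epsilon$ is reached. With $\lambda = \frac{\mu/8}{1+\mu/8}$, one has $\lambda = \Theta(\mu)$ and $\ln(1/(1-\lambda)) = \Theta(\mu)$ for $\mu \in (0,1]$, so each epoch consumes $\cO(1/\mu)$ oracle queries, giving a total of $\cO\big(\ln(\|\mT(\vx_0)-\vx_0\|/\epsilon)/\mu\big)$ queries as claimed.

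The only subtlety I anticipate is the minor technicality that $\cX_*$ need not be closed in a general Banach space and that the infimum defining $\dist(\vx_0,\cX_*)$ need not be attained; this is handled cleanly by the approximation-sequence argument above, since the bound from Lemma~\ref{lemma:fixed-step-convergence} is valid for every $\vx_* \in \cX_*$ and the right-hand side depends continuously on $\|\vx_0-\vx_*\|$. Beyond that, the proof reduces to plugging in parameters; no new inequality is required.
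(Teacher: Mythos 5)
Your proof is correct and follows essentially the same route as the paper: apply \Cref{lemma:fixed-step-convergence}, take the infimum over $\cX_*$ to replace $\|\vx_0-\vx_*\|$ with $\dist(\vx_0,\cX_*)$, invoke the error bound, split into transient and steady-state terms, and then run in halving epochs. The only addition is your remark about the infimum over $\cX_*$ possibly not being attained, a minor but valid bit of care that the paper elides.
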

\begin{proof}
If $\|\mT(\vx_0) - \vx_0\| = 0,$ there is nothing to prove, so assume this is not the case. From \Cref{lemma:fixed-step-convergence}, we have that for all $k \geq 1$ and all $\vx_* \in \cX_*,$ $\|\mT(\vx_k) - \vx_k\| \leq (1-\lambda)^k\|\mT(\vx_0) - \vx_0\| + \frac{2\lambda}{1-\lambda}\|\vx_0 - \vx_*\|$. 
    Thus, 
    \begin{align*}
        \|\mT(\vx_k) - \vx_k\| &\leq (1-\lambda)^k\|\mT(\vx_0) - \vx_0\| + \frac{2\lambda}{1-\lambda}\dist(\vx_0, \cX_*)\\
        &\leq (1-\lambda)^k\|\mT(\vx_0) - \vx_0\| + \frac{2\lambda}{1-\lambda}\frac{1}{\mu}\|\mT(\vx_0) - \vx_0\|, 
    \end{align*}
    where the last inequality is by the lemma assumption. The first lemma claim now follows by solving the inequalities $(1-\lambda)^k \leq \beta/2$ and $\frac{2\lambda}{1-\lambda}\frac{1}{\mu}\|\mT(\vx_0) - \vx_0\| \leq (\beta/2)\|\mT(\vx_0) - \vx_0\|$ for $k$ and $\lambda,$ respectively. 

    Now consider restarting \eqref{eq:main-iteration} each time the fixed-point error of the iterate $\vx_k$ gets halved (i.e., whenever $\norm{\mT(\vx_k) - \vx_k} \leq (1/2)\norm{\mT(\vx_0) - \vx_0}$). Here, restarting means that the new calls to the iteration \eqref{eq:main-iteration} are initialized at the output $\vx_k$ that triggered the restart. Using the first part of the lemma, a restart is triggered every $k = \lceil \frac{\ln(2/\beta)}{\ln(1/(1-\lambda))} \rceil = \cO\big(\frac{1}{\mu}\big)$ iterations, in which case the fixed-point error is halved. Hence, there can be at most $\lceil\log_2(\norm{ 2\mT(\vx_0) - \vx_0}/\epsilon)\rceil$ restarts in total, proving the second claim. 
\end{proof}
We remark here that the result of \Cref{lemma:LEB-convergence} does not directly translate into linear convergence for GHAL and AdaGHAL under a local error  bound, as both algorithms progressively decrease the step size $\lambda$ as the fixed-point error of the iterates is decreased. While it is possible to change the while-loop conditions of these two algorithms to prevent reducing the step size once it is sufficiently small, this would, however, come at a cost of higher $\cO\big(\frac{\ln(\|\mT(\vx_0 - \vx_0\|/\epsilon)}{1-\gamma}\big)$ oracle complexity for contractive operators with a small value of $\gamma.$ For this reason, we do not consider such algorithm modifications and defer investigation of convergence under local error bounds to future work.  

\section{Gradually Expansive Operators and Convergence of AdaGHAL}\label{sec:grad-expansive}

Even though improving the results under mild operator expansion seems unlikely, we argue below that by using {\Cref{algo:adaGHAL-rev}}, we can ensure convergence to a fixed point for a class of operators with \emph{gradual expansion} (defined below), whose bound on the Lipschitz constant is dependent on the fixed-point residual (thus the bound is required to \emph{gradually} decrease with the fixed-point residual). The basic idea is that in {\Cref{algo:adaGHAL-rev}}, the inner while loop can ensure a reduction in error within a finite number of iterations as long as $(1-\lambda_k)\norm{\mT(\vy_{j+1}) - \mT(\vy_j)} < \norm{\vy_{j+1} - \vy_j}.$ The definition of gradually expansive operators was obtained by ensuring such a condition.

\begin{definition}[$\alpha$-Gradually Expansive Operators]\label{def:gradually-expansive-op}
    Given an operator $\mT$ that satisfies \Cref{assp:compact-domain}, we say that $\mT$ is $\alpha$-gradually expansive for some $\alpha \in [0, 1)$, if the following holds: for all $\vx, \vy \in \cC$ and all $\epsilon > 0$,
    \begin{equation}\notag
       \max\{\|\mT(\vx) - \vx\|, \|\mT(\vy) - \vy\|\} \leq \epsilon \;\Rightarrow \; \|\mT(\vx) - \mT(\vy)\| \leq \Big(1 + \frac{\alpha \epsilon}{D}\Big)\|\vx - \vy\|.
    \end{equation}
\end{definition}

It is immediate that the above definition generalizes nonexpansive operators, as the case $\alpha = 0$ corresponds to nonexpansivity. However, for $\alpha > 0,$ the operator need not be nonexpansive, with allowed expansion depending on the fixed-point residual relative to the diameter of the set $\cC.$

Our main result for this setting is summarized in the following theorem.

\begin{theorem}\label{thm:GHAL-grad-expansive}
    Let $\mT:\cC \to \cC$ be an $\alpha$-gradually expansive operator for $\alpha \in (0, \sqrt{2}-1)$, where the diameter of $\cC$ is bounded by $D < \infty.$ Given $\epsilon > 0,$ if {\Cref{algo:adaGHAL-rev}} is invoked with parameters $\beta, \beta' \in (0, 1)$ such that $\beta^3 >  \alpha(1+\alpha + \beta^2)$ and $\beta' \in {(0, \frac{c}{1+c}]}$, where $c > 0$ is the constant such that  $\beta^3 = (1+c)\alpha(1+\alpha+\beta^2)$,  then it outputs $\vxh_k \in \cC$ such that $\|\mT(\vxh_k) - \vxh_k \| \leq \epsilon$ within the number of oracle queries    
    \[{\cO\Big( \frac{-\ln(\beta(1-\beta))}{\beta^4} \min\Big\{ \frac{D}{\beta' \epsilon}, \frac{D^2}{\beta^4 \epsilon^2}\Big\} \Big)}.\]  
\end{theorem}
\begin{proof}
We first note that the parameter $\beta'$ was chosen so that the safeguard condition in {\Cref{algo:adaGHAL-rev}} never gets triggered, as will be argued below by proving that for any fixed outer iteration $k,$ all iterates $\vy_{j}$ satisfy $\|\vy_{j+1} - \vy_j\| < (1 - \beta' \lambda_k)\|\vy_j - \vy_{j-1}\|$ for $j \geq 1$ (see \eqref{eq:grad-contraction-final} in the proof below). Then, all we need to do is bound above the contraction factor $\rho_k \leq 1 - \beta' \lambda_k,$ since we can invoke the same argument that we used in bounding the oracle complexity in \Cref{thm:adaGHAL-Lipschitz}.  
{Observe first that we can assume w.l.o.g.\ that $\max\{\norm{\vy_j - \vy_0}, \norm{\mT(\vy_j) - \vy_0}\} \leq D_k,$ since otherwise a Type (2) update would be triggered, which cannot be followed by a safeguard, since $j$ is reset to zero.}
    
    We argue first by induction on $j$ that for all $j \geq 0,$ $\|\mT(\vy_j) - \vy_j\| \leq (1-\lambda_k)(1 + \alpha + \beta^2) \epsilon_{k-1}.$ We then use this bound to conclude that by \Cref{def:gradually-expansive-op}, $\|\mT(\vy_{j+1}) - \mT(\vy_j)\| \leq (1 + \frac{(1-\lambda_k)\alpha (1 + \alpha + \beta^2) \epsilon_{k-1}}{D} )\|\vy_{j+1} - \vy_j\|.$ %
    For the base case, we consider $j = \{0, 1\}.$ Because $\vy_0 = \vxh_{k-1},$ we have from {\Cref{algo:adaGHAL-rev}} that $\|\mT(\vy_0) - \vy_0\| \leq \epsilon_{k-1}.$ By the definition of $\vy_1 := \lambda_k \vy_0 + (1 - \lambda_k) \mT(\vy_0)$ we then also have $\|\vy_1 - \vy_0\| = (1 - \lambda_k)\|\mT(\vy_0) - \vy_0\| \leq (1 - \lambda_k)\epsilon_{k-1}.$ To finish the base case, it remains to bound $\|\mT(\vy_1) - \vy_1\|.$ By the definition of $\vy_1,$ subtracting $\mT(\vy_1)$ and using the triangle inequality, %
    \begin{equation}\label{eq:init-fixed-point}
    \begin{aligned}
        \|\mT(\vy_1) - \vy_1\| &\leq \lambda_k\|\vy_0 - \mT(\vy_1)\| + (1 - \lambda_k)\|\mT(\vy_1) - \mT(\vy_0)\|\\ %
        &\leq (1 - \lambda_k)\beta^2  \epsilon_{k-1} + (1 - \lambda_k)\|\mT(\vy_1) - \mT(\vy_0)\|,
    \end{aligned}
    \end{equation}
    where we have used the definition of $\lambda_k,$ by which $\frac{\lambda_k}{1 - \lambda_k} = \beta^2 \epsilon_{k-1}/D_k$ and  that $\|\mT(\vy_1) - \vy_0\| \leq D_k$.     
    Observe that for any $\vy \in \cC,$ $\|\mT(\vy) - \vy\| \leq D,$ and thus for any $\vx, \vy \in \cC,$ by \Cref{def:gradually-expansive-op}, we have $\|\mT(\vx) - \mT(\vy)\| \leq (1 + \alpha)\|\vx - \vy\|.$ As a consequence, applying \Cref{def:gradually-expansive-op}, we have
    \begin{align}
        \|\mT(\vy_1) - \vy_1\| &\leq (1 - \lambda_k)\beta^2  \epsilon_{k-1} + (1 - \lambda_k)(1 + \alpha)\|\vy_1 - \vy_0\|\notag\\
        &\leq (1 - \lambda_k)\beta^2  \epsilon_{k-1} + (1 - \lambda_k)^2(1 + \alpha)\epsilon_{k-1}\notag\\%
        &< (1-\lambda_k)(1 + \alpha + \beta^2) \epsilon_{k-1}. 
        \label{eq:fp1-1}
    \end{align}

    Now suppose that for some $j \geq 1$ we have for all $j' \in\{0, 1, \dots, j\},$ $\|\mT(\vy_{j'}) - \vy_{j'}\| < (1-\lambda_k)(1 + \alpha + \beta^2) \epsilon_{k-1}$. Then, by \Cref{def:gradually-expansive-op}, for all $j' \in\{0, 1, \dots, j\},$
    \begin{equation}\notag
        \|\mT(\vy_{j'}) - \mT(\vy_{j'-1})\| < \Big(1 + \frac{\alpha(1 + \alpha + \beta^2) \epsilon_{k-1}}{D}\Big)\|\vy_{j'} - \vy_{j' - 1}\|, 
    \end{equation}
    which further implies that
    \begin{align}
        \|\vy_{j'+1} - \vy_{j'}\| &= (1 - \lambda_k)\|\mT(\vy_{j'}) - \mT(\vy_{j'-1})\|\notag\\
        & < (1 - \lambda_k) \Big(1 + \frac{\alpha(1 + \alpha + \beta^2) \epsilon_{k-1}}{D}\Big)\|\vy_{j'} - \vy_{j' - 1}\|. \notag
    \end{align}
    Observing that
    \begin{align*}
        (1 - \lambda_k) \Big(1 + \frac{\alpha(1 + \alpha + \beta^2) \epsilon_{k-1}}{D}\Big) &= \frac{1}{1 + \beta^2 \epsilon_{k-1}/D_k} \Big(1 + \frac{\alpha(1 + \alpha + \beta^2) \epsilon_{k-1}}{D}\Big) \leq 1,
    \end{align*}
    where the inequality is from the condition $\beta^2(\beta - \alpha) \geq  \alpha(1 + \alpha)$ from the theorem statement, we conclude that for all $j' \in\{0, 1, \dots, j\},$ we have $\|\vy_{j'+1} - \vy_{j'}\| < \|\vy_{j'} - \vy_{j' -1}\|$ and so it must be
    \begin{equation}\label{eq:grad-dist-dec}
        \|\vy_{j+1} - \vy_j\| < \|\vy_1 - \vy_0\| \leq (1 -\lambda_k)\epsilon_{k-1}. 
    \end{equation}
    Using the definition of $\vy_{j+1}$ and the triangle inequality, 
    \begin{align*}
        \|\mT(\vy_{j+1}) - \vy_{j+1}\| &\leq \lambda_k\|\vy_0 - \mT(\vy_{j+1})\| + (1 - \lambda_k)\|\mT(\vy_{j+1}) - \mT(\vy_j)\|\\
        &{<} (1-\lambda_k)\beta^2 \epsilon_{k-1} + (1 + \alpha)(1 - \lambda_k)^2 \epsilon_{k-1}\\
        &< (1-\lambda_k)(1 + \alpha + \beta^2)\epsilon_{k-1}, 
    \end{align*}
    where the second line is by the definition of $\lambda_k$, $\|\vy_0 - \mT(\vy_{j+1})\| \leq D_k$, $ \|\mT(\vy_{j+1}) - \mT(\vy_j)\| \leq (1+\alpha)\|\vy_{j+1} - \vy_j\|$, and \Cref{eq:grad-dist-dec}, while the third line is by $\lambda_k \in (0, 1).$ 
    
    Thus, we conclude by induction on $j$ that all iterates $j$ in the inner while loop of {\Cref{algo:adaGHAL-rev}} satisfy $\|\mT(\vy_j) - \vy_j\| < (1-\lambda_k)(1 + \alpha + \beta^2)\epsilon_{k-1}.$ Hence, for all $j \geq 0,$
    \begin{equation}\label{eq:grad-halpern-contraction-factor-1}
        \|\mT(\vy_{j+1}) - \mT(\vy_j)\| \leq \Big(1 + (1-\lambda_k) \frac{\alpha \epsilon_{k-1}}{D}(1 + \alpha + \beta^2)\Big)\|\vy_{j+1} - \vy_j\|.
    \end{equation}
    Suppose now that $\beta^3 = (1 + c)\alpha(1 + \alpha + \beta^2)$ for some $c > 0$, where we require $\beta \in (\alpha, 1)$ (note that such a choice must exist, due to the assumption that $\alpha < \sqrt{2}-1${, which is equivalent to $\alpha\frac{1+\alpha}{1-\alpha}<1$}). Then:
    \begin{align}
        1 - \lambda_k &= \frac{1}{1 + \beta^2 \epsilon_{k-1}/D_k} 
        \leq \frac{1}{1 + (1 + c)\alpha(1 + \alpha + \beta^2)\epsilon_{k-1}/D}.\notag
    \end{align}
    Using $D_k \leq D/\beta,$ a rearrangement of the last inequality gives
    \(
        \alpha(1 + \alpha + \beta^2)\epsilon_{k-1}/D \leq \frac{1}{1+c}\cdot\frac{\lambda_k}{1- \lambda_k}, 
    \)
    which, when plugged back into \eqref{eq:grad-halpern-contraction-factor-1} leads to
    \begin{equation}\label{eq:grad-halpern-contraction-factor-2}
        \|\mT(\vy_{j+1}) - \mT(\vy_j)\| \leq \Big(1 + \frac{\lambda_k}{1+c}\Big)\|\vy_{j+1} - \vy_j\|, \; \forall j \geq 0.
    \end{equation}
    As a consequence, we have that for all $j \geq 1,$
    \begin{equation}\label{eq:grad-contraction-final}
    \begin{aligned}
         \|\vy_{j+1} - \vy_j\| \leq \Big(1 - \frac{\lambda_k}{1+c}(c + \lambda_k)\Big)\|\vy_{j} - \vy_{j-1}\| {\leq (1-\beta'\lambda_k)\norm{\vy_j - \vy_{j-1}}},
    \end{aligned}
    \end{equation}
    {thus the safeguard condition is never triggered. The iteration/oracle query upper bound $\cO\big(\frac{D}{\epsilon}\cdot \frac{-\ln(\beta(1-\beta))}{\beta'\beta^4}\big)$ now follows as an application of \Cref{thm:adaGHAL-Lipschitz}. For the latter bound in the min, one needs to use the stronger bound on $\rho_k$ obtained in the analysis above, by which $1 - \frac{\lambda_k}{1+c}(c + \lambda_k) \leq  1 - \lambda_k^2.$ This inequality holds because $\frac{\lambda_k}{1+c}(c + \lambda_k) \geq \lambda_k^2$ is (after simplifying) equivalent to $c\lambda_k \geq c\lambda_k^2$, which is true since $\lambda_k \in (0, 1)$ and $c > 0.$ The argument is similar to what was used in proving \Cref{thm:adaGHAL-Lipschitz}, and is omitted for brevity.}
\end{proof}

A direct consequence of \Cref{thm:GHAL-grad-expansive} is that  for $\alpha$-gradually expansive operators with $\alpha \in (0, \sqrt{2}-1),$ we can guarantee that $\inf_{\vx \in \cC}\norm{\mT(\vx) - \vx} = 0,$ even in infinite-dimensional spaces, despite the Lipschitz constant of $\mT$ being possibly as large as $\gamma = 1+ \alpha \in (1, \sqrt{2})$. Recall here that, without additional assumptions, in infinite-dimensional Banach spaces, the best that can be shown for a $\gamma$-Lipschitz operator $\mT$ is that $\inf_{\vx \in \cC}\norm{\mT(\vx) - \vx} \leq (1 - 1/\gamma)D,$ and this inequality is tight in general \cite{goebel1973minimal}. 

{The bound in \Cref{thm:GHAL-grad-expansive} is not fully specified, as the algorithm parameters $\beta', \beta$ are implicitly defined. The following corollary provides a full parameter specification.}

\begin{corollary}\label{cor:grad-exp}
   { Suppose $\alpha \in (0, \sqrt{2}-1)$ and define $\delta = \sqrt{2}-1 -\alpha.$ Then any parameter choice $\beta, \beta'$ that satisfies the following inequalities:
    \begin{equation}\label{eq:beta-beta'}
        1 > \beta \geq \big(1 - \delta(1 + \sqrt{2}/2)\big)^{1/3},\quad 0 < \beta' \leq \frac{\beta^3 - \alpha(1 + \alpha + \beta^2)}{\beta^3}
    \end{equation}
    also satisfies the conditions from \Cref{thm:GHAL-grad-expansive}. As a consequence, by choosing $\beta, \beta'$ to satisfy the right inequalities in \eqref{eq:beta-beta'} with equality, the oracle complexity of \Cref{algo:adaGHAL-rev} stated in \Cref{thm:GHAL-grad-expansive} is bounded by $\cO\big(\ln(\frac{1}{\delta})\min\{\frac{D}{\delta \epsilon}, \frac{D^2}{\epsilon^2}\}\big)$.}

    {In particular, if $\alpha \leq 0.4,$ then there is a universal choice of $\beta = 0.992,$ $\beta' = 0.02$ for which the oracle  complexity of \Cref{algo:adaGHAL-rev} is $\cO(\frac{D}{\epsilon}).$ }
\end{corollary}
\begin{proof}
Let $h:= 1 + \sqrt{2}/2$, so the stated bound on $\beta$ is $1 > \beta \geq (1 - h\delta)^{1/3}.$ Observe first that as $\delta = \sqrt{2} - 1 - \alpha \in (0, \sqrt{2} - 1),$ we have that $h\delta < \sqrt{2}/2 < 1,$ so $1-h\delta > 0,$ and thus under the stated condition it holds $\beta \in (0, 1),$ as required. For the choice of $\beta$ to be valid under the requirements of \Cref{thm:GHAL-grad-expansive}, we need to verify that $\beta^3 > \alpha(1 + \alpha + \beta^2).$ Since $\beta^3 \geq 1 - h\delta$ and $\alpha(1 + \alpha + \beta^2) < \alpha(2 + \alpha)$ (since $\beta < 1$), it suffices to verify that $1 - h\delta \geq \alpha(2 + \alpha).$ Recalling that $h = 1 + \sqrt{2}/2$ and $\alpha = \sqrt{2} - 1 - \delta,$ the last inequality is equivalent to $1 - (1+\sqrt{2}/2)\delta \geq 1 - 2\sqrt{2}\delta + \delta^2,$ which, after rearranging, is equivalent to $(3\sqrt{2}/2 - 1)\delta \geq \delta^2.$ Since $\delta \in (0, \sqrt{2}-1)$ by assumption, this inequality clearly holds.

    Now, to bound the oracle complexity of \Cref{algo:adaGHAL-rev}, let $\beta = (1-h\delta)^{1/3}$ and observe that since $\delta < \sqrt{2} - 1,$ we have $\beta > (1-\sqrt{2}/2)^{1/3} > 0.66$, meaning it is well separated from zero and so all terms in the oracle complexity depending on $1/\beta$ can be treated as constants. On the other hand, when $\delta \to 0,$ we have $\beta = 1 - h\delta/3 + \cO(\delta^2)$, so $1-\beta = \Theta(\delta)$. It remains to determine the order of $\beta'$, which we can choose as $\beta' = \frac{\beta^3 - \alpha(1+\alpha + \beta^2)}{\beta^3} > \beta^3 - \alpha(2+\alpha),$ as $\beta < 1$. By the calculation from the first paragraph in the proof, $\beta^3 - \alpha(2+\alpha) \geq (3\sqrt{2}/2 - 1)\delta - \delta^2,$ so $\beta' = \Omega(\delta).$ Plugging into the oracle complexity bound from \Cref{algo:adaGHAL-rev}, we get that the resulting oracle complexity in this case is $\cO\big(\ln(\frac{1}{\delta})\min\{\frac{D}{\delta \epsilon}, \frac{D^2}{\epsilon^2}\}\big).$

    {Now let $\alpha \leq 0.4,$ so $\delta \geq 0.01421$ can be treated as a constant. Then, computing the numerical values in \eqref{eq:beta-beta'}, we get that it suffices that $\beta \geq 0.992$, $\beta' \leq 0.02.$ Since all algorithm parameters are now absolute constants, the oracle complexity is $\cO(\frac{D}{\epsilon})$.}
\end{proof}
%

\subsection{A Simple Example and Properties}\label{sec:grad-expansive-examples}

Gradual expansion is a technical condition that, as mentioned earlier, was discovered when studying the convergence properties of AdaGHAL. However, its interpretation is intuitive: it bounds the excess expansion ($\gamma - 1$) of an operator between two points proportional to the maximum of fixed-point residuals of the two points relative to the diameter of the set. Further, this condition can be seen as a quantitative version of the ``asymptotically nonexpansive'' property studied in \cite{lauster2021convergence,berdellima2022alpha}, defined as the property that for an operator $\mT$ and any $\delta > 0,$ there exists a sufficiently small neighborhood $\cN_\delta$ of the fixed point set of $\mT$ on which $\mT$ is $(1 + \delta)$-Lipschitz. Conversely: any asymptotically nonexpansive operator that satisfies a local error bound  such as $\|\mT(\vx) - \vx\| \geq \mu\, \dist(\vx, \cX_*)$, where $\cX_* \neq \emptyset$ is the set of fixed points of $\mT$, can be argued to be gradually nonexpansive in a local neighborhood of $\cX_*$; for instance, this is true for generalizations of proximal mappings of convex functions in CAT($\kappa$) spaces; see \cite{lauster2021convergence,berdellima2022alpha} for relevant definitions.\footnote{The author thanks Prof.\ Russell Luke for these insights.}

In the lemma below, we provide an explicit construction of an operator that is gradually expansive on its domain. 

\begin{lemma}\label{lemma:ex-grad-exp}
    Given $d \geq 1,$ $\alpha \in [0, 1),$ $D > 0$, and $\cC = [-D/2, D/2]^d,$ let $\mT(\vx) := \proj_{\cC}(\vx + \exp(\frac{\alpha \vx}{D}))$, where $\proj_{\cC}$ is the orthogonal projection onto $\cC$ and $\exp(\frac{\alpha \vx}{D})$ applies coordinate-wise. Then $\mT(\vx)$ is $\alpha$-gradually expansive w.r.t.\ the $\ell_\infty$ norm.
\end{lemma}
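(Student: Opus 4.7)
The map $\mT$ is fully separable. Since $\cC = [-D/2, D/2]^d$ is a product of intervals, $\proj_\cC$ amounts to coordinate-wise clipping to $[-D/2, D/2]$, and the exponential is itself applied coordinate-wise, so $\mT(\vx)_i = g(x_i)$ with $g(t) := \min(D/2,\, t + e^{\alpha t/D})$, and the per-coordinate residual is $\phi(t) := g(t) - t = \min(D/2 - t,\, e^{\alpha t/D}) \geq 0$. The plan is to prove the scalar claim: whenever $\phi(x_i), \phi(y_i) \leq \epsilon$, one has $|g(x_i) - g(y_i)| \leq (1 + \alpha\epsilon/D)\,|x_i - y_i|$; taking the maximum over $i$ then yields the $\ell_\infty$ gradual expansion required by \Cref{def:gradually-expansive-op}.

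Let $t^* \in (-D/2, D/2)$ be the unique point where $t + e^{\alpha t/D} = D/2$; then $g'(t) = 1 + (\alpha/D)e^{\alpha t/D}$ on $[-D/2, t^*]$ and $g(t) = D/2$ on $[t^*, D/2]$. Assuming WLOG $x_i \leq y_i$, the proof splits into three cases. First, if both $x_i, y_i \geq t^*$, then $g(x_i) = g(y_i) = D/2$ and the bound is trivial. Second, if both $x_i, y_i \leq t^*$, the mean-value theorem gives $g(y_i) - g(x_i) = (y_i - x_i)(1 + (\alpha/D)e^{\alpha z_i/D})$ for some $z_i \in [x_i, y_i] \subseteq [-D/2, t^*]$, and $e^{\alpha z_i/D} \leq e^{\alpha y_i/D} = \phi(y_i) \leq \epsilon$ closes the bound. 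The remaining mixed case $x_i \leq t^* \leq y_i$ is the technical heart; there, direct computation yields $g(y_i) - g(x_i) = D/2 - x_i - e^{\alpha x_i/D}$, and the residual constraints specialize to $e^{\alpha x_i/D} \leq \epsilon$ together with $y_i \geq \max(D/2 - \epsilon,\, t^*)$.

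The mixed case splits further depending on which of $D/2 - \epsilon$ and $t^*$ is larger. When $\epsilon \leq D/2 - t^*$, one uses $y_i - x_i \geq D/2 - \epsilon - x_i$ and reduces the target to nonnegativity of the auxiliary function $h(t) := e^{\alpha t/D} - \epsilon + (\alpha\epsilon/D)(D/2 - \epsilon - t)$ on the interval $t \in [-D/2, (D/\alpha)\ln\epsilon]$; since $h'(t) = (\alpha/D)(e^{\alpha t/D} - \epsilon) \leq 0$ on this interval, the minimum occurs at the right endpoint, where $h$ evaluates to $(\alpha\epsilon/D)(D/2 - \epsilon - (D/\alpha)\ln\epsilon) \geq 0$. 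When $\epsilon > D/2 - t^*$, one instead uses $y_i - x_i \geq t^* - x_i$ and the target collapses to $e^{\alpha t^*/D} - e^{\alpha x_i/D} \leq (\alpha\epsilon/D)(t^* - x_i)$, which follows from the mean-value theorem applied to $e^{\alpha(\cdot)/D}$ together with $e^{\alpha t^*/D} = D/2 - t^* \leq \epsilon$. The main obstacle lies precisely in this mixed case: identifying the correct lower bound on $y_i - x_i$ in each subregime and constructing the auxiliary function $h$ whose monotonicity absorbs the interaction between the exponential term and the linear slack $D/2 - \epsilon - t$. Combining all cases coordinate-wise and passing to the $\ell_\infty$ maximum completes the proof.
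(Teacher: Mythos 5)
Your proof is correct and follows the same high-level plan as the paper's: reduce to the scalar operator coordinate-wise using separability of $\cC$ and the $\ell_\infty$ norm, then split on whether the iterates get clipped by the projection. The easy cases (neither clipped, both clipped) are handled essentially identically. You diverge in the technical heart, the mixed case. The paper exploits the ``$\max$'' in Definition~\ref{def:gradually-expansive-op} directly: it splits on whether $\Delta(x) := D/2 - x$ is smaller or larger than $e^{\alpha y/D}$, makes the first subcase trivial, shows the left-hand side of the resulting inequality is monotone decreasing in $x$, and so reduces to the boundary $x = t^*$ where the inequality collapses to $1 + a \leq e^a$. You instead fix $\epsilon$ explicitly, split on whether $\epsilon$ exceeds $D/2 - t^*$, and use an auxiliary function $h$ (plus a separate MVT argument in the other subcase). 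Both arguments land on a boundary evaluation; the paper's is arguably slicker since the $\max$ in the definition does work for free, while yours is perhaps more systematic because it keeps $\epsilon$ as an explicit parameter throughout.

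Two small gaps you should close to make this airtight. First, you assume the threshold $t^*\in(-D/2,D/2)$ with $t^*+e^{\alpha t^*/D}=D/2$ exists, but if $-D/2 + e^{-\alpha/2} > D/2$ no such point exists and $g \equiv D/2$ on the whole interval; that degenerate case (and the trivial case $\alpha = 0$, where division by $\alpha$ is undefined) needs a separate sentence, as the paper includes. Second, in the first mixed subcase you assert $h\bigl((D/\alpha)\ln\epsilon\bigr) = (\alpha\epsilon/D)\bigl(D/2 - \epsilon - (D/\alpha)\ln\epsilon\bigr) \geq 0$ without justification. This does hold, but it requires the subcase hypothesis $\epsilon \leq D/2 - t^*$: writing $s := (D/\alpha)\ln\epsilon$, one has $s \leq t^*$ and $\epsilon = e^{\alpha s/D}$, so $D/2 - \epsilon - s = D/2 - (s + e^{\alpha s/D}) \geq D/2 - (t^* + e^{\alpha t^*/D}) = 0$ by monotonicity of $u \mapsto u + e^{\alpha u/D}$. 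Spell this out; as written the nonnegativity looks like it is being taken on faith.
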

\begin{proof}
    Observe that $\mT: \cC\to \cC$, by its definition. The claim of the lemma is immediate when $\alpha = 0$, as $\mT$ is nonexpansive in this case (coordinate-wise shift + clipping to the interval $[-D/2, D/2]$), so assume that $\alpha \in (0, 1).$ Consider first a univariate operator $t(x)$ defined via $\proj_{[-D/2, D/2]}(x + f(x))$, where $\proj_{[-D/2, D/2]}(t) = \min\{D/2, \max\{-D/2, t\}\}$ is the projection onto the  interval $[-D/2, D/2]$ and we define $f(x) = \exp(\alpha x/D)$. In the rest of the proof, we argue that $t$ is $\alpha$-gradually expansive. This suffices for proving that $\mT$ is $\alpha$-gradually expansive, since (using superscript notation $^{(i)}$ for indexing  coordinates)
    \ifsiopt
    \begin{align*}
        \|&\mT(\vx) - \mT(\vy)\|_\infty = \max_{1 \leq i \leq d}|t(\vx^{(i)}) - t(\vy^{(i)})|\\
        &\leq  \max_{1 \leq i \leq d}\Big(1 + \frac{\alpha\max\{|t(\vx^{(i)}) - \vx^{(i)}|,\, |t(\vy^{(i)}) - \vy^{(i)}|\}}{D} \Big)|\vx^{(i)} - \vy^{(i)}|\\
        &\leq \Big(1 + \frac{\alpha\max\{\norm{\mT(\vx) - \vx}_\infty,\, \norm{\mT(\vy) - \vy}_\infty\}}{D} \Big)\norm{\vx - \vy}_\infty,
    \end{align*}
    \else
    \begin{align*}
        \|\mT(\vx) - \mT(\vy)\|_\infty &= \max_{1 \leq i \leq d}|t(\vx^{(i)}) - t(\vy^{(i)})|\\
        &\leq  \max_{1 \leq i \leq d}\Big(1 + \frac{\alpha\max\{|t(\vx^{(i)}) - \vx^{(i)}|,\, |t(\vy^{(i)}) - \vy^{(i)}|\}}{D} \Big)|\vx^{(i)} - \vy^{(i)}|\\
        &\leq \Big(1 + \frac{\alpha\max\{\norm{\mT(\vx) - \vx}_\infty,\, \norm{\mT(\vy) - \vy}_\infty\}}{D} \Big)\norm{\vx - \vy}_\infty,
    \end{align*}
    \fi
    where the first inequality is by $t$ being $\alpha$-gradually expansive. 
    
    Observe that $f'(x) = \frac{\alpha}{D}f(x)$ and so $|f(x) - f(y)| \leq \max\{|f'(x)|, |f'(y)|\}|x-y| = \frac{\alpha}{D}\max\{|f(x)|, |f(y)|\}|x-y|$. Thus, it is immediate that $|t(x) - t(y)| \leq (1 + \frac{\alpha}{D}\max\{|f(x)|, |f(y)|\})|x-y|$. As a consequence, if $t(x) = x + f(x)$ and $t(y) = y + f(y)$ (i.e., if neither $x$ nor $y$ gets clipped by the projection onto $[-D/2, D/2]$), then the inequality defining $\alpha$-gradual expansiveness  holds. 

    Observe further that $t(x)$ is increasing in $x,$ so the only way it can happen that $t(x) \neq x + f(x)$ is if $x + f(x) > D/2,$ in which case $t(x) = D/2.$ If both $x$ and $y$ get clipped, then both are equal to $D/2,$ so in this case $|t(x) - t(y)| = 0$ and the inequality defining $\alpha$-gradually expansive operators holds trivially. Thus, it remains to argue that this inequality still holds when only one of $x, y$ gets clipped. Suppose w.l.o.g.\ that $t(x) = D/2$ and $t(y) < D/2.$ Observe that in this case it must be $x > y$. Let $\Delta(x) = |t(x) - x| = t(x) - x = D/2 - x.$ We now have:
    \begin{align*}
        |t(x) - t(y)| &= D/2 - y - e^{\alpha y /D}\\
                     &= x - y + \Delta(x) - e^{\alpha y /D}.
    \end{align*}
    To complete the proof of the lemma, it suffices to argue that in this case we have
    \begin{equation}\label{eq:ex-grad-exp-condition}
        \Delta(x) - e^{\alpha y /D} \leq \frac{\alpha}{D}\max\{\Delta(x),\, e^{\alpha y /D}\}(x - y),
    \end{equation}
    as $x > y,$ $\Delta(x) = |t(x) - x|$, and $e^{\alpha y /D} = |t(y) - y|$. 
    
    When $\Delta(x) \leq e^{\alpha y /D}$, \eqref{eq:ex-grad-exp-condition} holds trivially, as the left-hand side is non-positive, while the right-hand side is positive. Thus, suppose now that $\Delta(x) > e^{\alpha y /D}$. Then, \eqref{eq:ex-grad-exp-condition} is equivalent to 
    \(
        \Delta(x) - e^{\alpha y /D} \leq \frac{\alpha}{D}\Delta(x)(x - y),
    \)
    which, rearranging and recalling that $\Delta(x) = D/2 - x \geq 0,$ is equivalent to
    \begin{equation}\label{eq:ex-grad-exp-condition-final}
        (D/2 - x)\Big(1 - \frac{\alpha}{D}(x - y)\Big) \leq e^{\alpha y /D}. 
    \end{equation}
    Observe that the left-hand side of \eqref{eq:ex-grad-exp-condition-final} is decreasing in $x.$ This is true because both $(D/2 - x)$ and $\big(1 - \frac{\alpha}{D}(x - y)\big)$ are non-negative and decreasing in $x$ (the latter term is non-negative because $(x - y) \leq D$ and $\alpha < 1$). Thus, the left-hand side is maximized when $x$ is minimized, which for the present case means that $x$ is such that $x + e^{\alpha x/D} = D/2$ (the smallest value at which $x$ gets clipped; if no such point exists, then $t(x) = D/2$ for all $x \in [-D/2, D/2]$ and the lemma claim holds trivially). Plugging this value of $x$ into \eqref{eq:ex-grad-exp-condition-final}, we get that it suffices to ensure that
    \begin{equation}\notag
        e^{\alpha x/D}\Big(1 - \frac{\alpha}{D}(x - y)\Big) \leq e^{\alpha y/D}.
    \end{equation}
    This inequality is equivalent to $1 + \frac{\alpha}{D}(y - x) \leq e^{\alpha/D(y - x)}$, which is true, as $e^a \geq 1 + a$ for any real number $a$. 
\end{proof}
\Cref{lemma:ex-grad-exp} provides an example of an operator $\mT$ that is $\alpha$-gradually expansive (but not nonexpansive, for any $\alpha \in (0, 1)$ and $D > 2$). However, this example is not interesting for numerical evaluations, as any reasonable fixed-point iteration (including Picard) would converge to a fixed point of $\mT$. This is because $\mT(\vx) \geq \vx$ coordinate-wise, and as soon as any of the coordinates reaches $D/2,$ a fixed point of $\mT$ is found. Nevertheless, we hope that this simple example can serve as a starting point for constructing more interesting gradually expansive operators, possibly by trying to replace the separable exponential mapping in its definition by a different mapping that would prevent Picard iteration from converging, while retaining the gradual expansion property. Towards this goal, we also state and prove a simple structural lemma that provides sufficient conditions for an operator to be  $\alpha$-gradually expansive.

\begin{lemma}\label{lemma:grad-exp-composition}
    Let $\mT: \cC \to \cC$ be an operator that satisfies \Cref{assp:compact-domain}. Let $\mF := \mT - \mId.$ If for all $\vx, \vy \in \cC$ it holds that \ifsiopt$$\|\mF(\vx) - \mF(\vy)\| \leq \frac{\alpha}{D}\max\{\|\mF(\vx)\|,\, \|\mF(\vy)\|\}\|\vx - \vy\|$$\else$\|\mF(\vx) - \mF(\vy)\| \leq \frac{\alpha}{D}\max\{\|\mF(\vx)\|,\, \|\mF(\vy)\|\}\|\vx - \vy\|$\fi for some $\alpha \in [0, 1)$, then $\mT$ is $\alpha$-gradually expansive. Further, in this case, if there exists a nonexpansive operator $\mR$ such that $\mR(\vx) \in \cC$ and $\vx + \mF(\mR(\vx)) \in \cC'$ for all $\vx \in \cC',$ on some bounded closed convex set $\cC'$ of diameter $D' < \infty$, then the operator $\mTt = \mId + \mF\circ \mR$ is $\alpha'$-gradually expansive with $\alpha' = \alpha D'/D.$ 
\end{lemma}
\begin{proof}
    The first claim of the lemma is immediate, as, by the triangle inequality and the definition of $\mF,$ we have 
    \begin{align*}
        \|\mT(\vx) - \mT(\vy)\| &\leq \|\vx - \vy\| + \|\mF(\vx) - \mF(\vy)\|\\
        &\leq \Big(1 + \frac{\alpha}{D}\max\{\|\mF(\vx)\|,\, \|\mF(\vy)\|\}\Big)\|\vx - \vy\|\\
        &= \Big(1 + \frac{\alpha}{D}\max\{\|\mT(\vx) - \vx\|,\, \|\mT(\vy) - \vy\|\}\Big)\|\vx - \vy\|. 
    \end{align*}
    For the remaining  claim, since  $\|\mF(\vx) - \mF(\vy)\| \leq \frac{\alpha}{D}\max\{\|\mF(\vx)\|,\, \|\mF(\vy)\|\}\|\vx - \vy\|$ and $\mR$ is nonexpansive, we have
    \begin{align*}
        \|\mF(\mR(\vx)) - \mF(\mR(\vy))\| &\leq \frac{\alpha}{D}\max\{\|\mF(\mR(\vx))\|,\, \|\mF(\mR(\vy))\|\}\|\mR(\vx) - \mR(\vy)\|\\
        &\leq \frac{\alpha}{D}\max\{\|\mF(\mR(\vx))\|,\, \|\mF(\mR(\vy))\|\}\|\vx - \vy\|\\
        &= \frac{\alpha}{D}\max\{\|\mTt(\vx) - \vx\|,\, \|\mTt(\vy) - \vy\|\}\|\vx - \vy\|,
    \end{align*}
    where the second inequality is by the nonexpansiveness of $\mR$ and the last inequality is by the definition of $\mTt.$ To complete the proof, it remains to use the triangle inequality by which $\|\mTt(\vx) - \mTt(\vy)\| \leq \|\vx - \vy\| +  \|\mF(\mR(\vx)) - \mF(\mR(\vy))\|\leq \big(1 + \frac{\alpha}{D}\max\{\|\mTt(\vx) - \vx\|,\, \|\mTt(\vy) - \vy\|\}\big)\|\vx - \vy\|$ and $\frac{\alpha}{D} = \frac{\alpha'}{D'}$.  
\end{proof}

\subsection{Implications of Mild and Gradual Expansion on Root Finding Problems}\label{sec:root-finding}

In this section, we discuss some implications of the presented results on related root finding problems for maximal, possibly multi-valued operators $\mF: \cE \rightrightarrows \cE$. In this subsection only, we assume that $\norm{\cdot}$ is inner-product induced, so that the space $\cE$ is self-dual. We assume that the resolvent operator $\mJ = ( \mF + \mId)^{-1}$ is well-defined and  reason about root finding problems, which amount to finding $\vx$ such that $\vzero \in \mF(\vx)$, by considering the problem of finding a fixed point for the resolvent operator.  A standard observation here is that if $\mJ(\vx) = \vu$, then, by the definition of $\mJ,$ we have that there exists $\mG_{\vu} \in \mF(\vu)$ such that
\begin{equation}\label{eq:resolvent-mapping}
 \mG_{\vu} + \vu = \vx.
\end{equation}
Thus, given any $\epsilon \geq 0,$ we have that $\|\mG_\vu\| \leq \epsilon$ if and only if $\|\mJ(\vx) - \vx\| \leq \epsilon.$ Hence we can reduce the problem of finding an $\epsilon$-approximate root of $\mF$ (namely, a point $\vu$ such that $\exists \mG \in \mF(\vu)$ with $\|\mG\| \leq \epsilon$) to the problem of finding an $\epsilon$-approximate fixed point of $\mJ.$ Thus, in the following, we provide sufficient conditions for $\mF$ that ensure $\mJ$ is either mildly or gradually expansive, so the associated root finding problems can be approximately solved by finding an approximate fixed point of $\mJ$. 

First, we establish a sufficient condition for $\gamma$-Lipschitzness of $\mJ$, as a condition on $\mF$ stated in the following lemma, which in turn provides a sufficient condition on $\mF$ that leads to a mildly nonexpansive resolvent. This lemma is simple to prove and is provided for completeness.

\begin{lemma}\label{lem:resolvent-mildly-exp}
    Given $\mF: \cE \rightrightarrows \cE$ such that $\dom(\mF) \neq \emptyset,$ let $\mJ  =  (\mF + \mId)^{-1}.$ If for some $\mu \in [0, 1)$, any $\vu, \vv \in \dom(\mF)$, and any $\mG_u \in \mF(\vu), \mG_\vv \in \mF(\vv),$ we have
    \begin{equation}\label{eq:hypomotone}
        \innp{\mG_\vu - \mG_\vv, \vu - \vv} \geq - \mu \|\vu - \vv\|^2,
    \end{equation}
    then $\mJ$ is $\frac{1}{1-\mu}$-Lipschitz. 
    In particular, suppose that $\sup_{\vx, \vy \in \dom(\mF)}\|\vx - \vy\| = D.$ Then a sufficient condition for $\mJ$ to be $\gamma$-Lipschitz with $\gamma = 1 + \frac{\epsilon \beta}{D}$ for some $\epsilon > 0,$ $\beta \in (0, 1)$ is that the above inequality applies with $\mu = \frac{\beta \epsilon}{D + \beta \epsilon}.$ 
\end{lemma}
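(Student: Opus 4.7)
The plan is to use the defining relation of the resolvent \eqref{eq:resolvent-mapping} together with the hypomotonicity inequality \eqref{eq:hypomotone} to derive a one-step bound on $\|\mJ(\vx) - \mJ(\vy)\|$ in terms of $\|\vx - \vy\|$.

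Concretely, I would start by fixing $\vx, \vy \in \cE$ (in the range of $\mF + \mId$, so that $\mJ$ is defined at these points), and setting $\vu := \mJ(\vx)$, $\vv := \mJ(\vy)$. By \eqref{eq:resolvent-mapping}, there exist $\mG_\vu \in \mF(\vu)$ and $\mG_\vv \in \mF(\vv)$ such that $\mG_\vu + \vu = \vx$ and $\mG_\vv + \vv = \vy$. Subtracting gives the key identity $\mG_\vu - \mG_\vv = (\vx - \vy) - (\vu - \vv)$, which lets me translate a property of $\mF$ into one on $\mJ$.

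Next, I would take the inner product of this identity with $\vu - \vv$, obtaining
\begin{equation*}
\innp{\mG_\vu - \mG_\vv, \vu - \vv} = \innp{\vx - \vy, \vu - \vv} - \|\vu - \vv\|^2.
\end{equation*}
Applying the hypotheses \eqref{eq:hypomotone} to the left-hand side gives $\innp{\vx - \vy, \vu - \vv} - \|\vu - \vv\|^2 \geq -\mu\|\vu - \vv\|^2$, which rearranges to $(1-\mu)\|\vu - \vv\|^2 \leq \innp{\vx - \vy, \vu - \vv}$. Since $\mu \in [0,1)$, Cauchy-Schwarz then yields $(1-\mu)\|\vu - \vv\| \leq \|\vx - \vy\|$, i.e., $\|\mJ(\vx) - \mJ(\vy)\| \leq \frac{1}{1-\mu}\|\vx - \vy\|$, establishing the first claim.

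For the ``In particular'' part, I would simply invert the relation $\gamma = \frac{1}{1-\mu}$ with $\gamma = 1 + \beta\epsilon/D$: solving gives $1 - \mu = \frac{D}{D + \beta\epsilon}$, hence $\mu = \frac{\beta\epsilon}{D + \beta\epsilon}$, which is in $[0,1)$ as required. I do not expect any real obstacle here: the proof is a short calculation, and the only subtlety is ensuring that $\mJ$ is single-valued (or interpreting the Lipschitz bound as holding for every selection), which follows from the hypomotonicity: if $\vu, \vu' \in \mJ(\vx)$, applying the inequality above with $\vy = \vx$ and $\vv = \vu'$ forces $\|\vu - \vu'\| = 0$.
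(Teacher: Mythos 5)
Your proposal is correct and follows essentially the same route as the paper's proof: both set $\vu = \mJ(\vx)$, $\vv = \mJ(\vy)$, expand $\|\vu-\vv\|^2$ via the resolvent identity, apply the hypomonotonicity bound, and finish with Cauchy–Schwarz. Your added remark on single-valuedness of $\mJ$ (taking $\vy=\vx$ to force coincidence of selections) is a nice explicit justification of a point the paper leaves implicit.
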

\begin{proof}
    The second part of the lemma is an immediate corollary of the first part, so we only prove the first statement. Fix any $\vx, \vy \in \cE$ and let $\vu = \mJ(\vx),$ $\vv = \mJ(\vy)$. Observe that $\vu, \vv \in \dom(\mF)$. Using the definition of the resolvent, we further have $\vx = \mG_\vu + \vu$ for some $\mG_\vu \in \mF(\vu)$ and $\vy = \mG_\vv + \vv$ for some $\mG_\vv \in \mF(\vv).$ Thus, 
    \begin{equation}\label{eq:res-1st-eq}
        \|\vu - \vv\|^2 = \innp{\vx - \vy, \vu - \vv} - \innp{\mG_\vu - \mG_\vv, \vu - \vv}. 
    \end{equation}
    Since, by assumption, $\innp{\mG_\vu - \mG_\vv, \vu - \vv} \geq - \mu \|\vu - \vv\|^2,$ rearranging \eqref{eq:res-1st-eq} and applying Cauchy-Schwarz inequality, we get
    \begin{align*}
        (1-\mu)\|\vu - \vv\|^2 \leq \innp{\vx - \vy, \vu - \vv} \leq \|\vx - \vy\|\|\vu - \vv\|.
    \end{align*}
    Recalling that $\vu = \mJ(\vx), \vv = \mJ(\vy),$ this in turn implies that $\mJ$ is $\frac{1}{1-\mu}$-Lipschitz. 
\end{proof}
One way of satisfying the conditions of \Cref{lem:resolvent-mildly-exp} is when $\mF$ is the sum of a single-valued operator $\mG$ that satisfies \eqref{eq:hypomotone} and a normal cone of a convex set of bounded diameter $D.$ For instance, such a condition would hold for weakly convex-weakly concave min-max optimization problems with a sufficiently small (order $\epsilon/D$) weak convexity parameter $\mu$, without imposing any additional assumptions such as the existence of weak solutions to the associated variational inequality problem \cite{rafique2022weakly}.\footnote{Assuming that there exists a weak (or Minty) solution to the associated variational inequality is very restricting. For instance, for Lipschitz operators, such an assumption suffices for solving the associated problem with basic extragradient-type methods without any further assumptions on weak monotonicity as in \eqref{eq:hypomotone}; see, for example, \cite{diakonikolas2021efficient}.}

Finally, it is possible to derive a sufficient condition for the resolvent of a possibly nonmonotone operator $\mF$ to be $\alpha$-gradually expansive, as stated in the lemma below.

\begin{lemma}\label{lem:VI-grad-exp}
    Given $\mF: \cE \rightrightarrows \cE$ such that $\dom(\mF) \neq \emptyset$ and $\sup_{\vx, \vy \in \dom(\mF)}\|\vx - \vy\| = D,$ let $\mJ  =  (\mF + \mId)^{-1}.$ %
    If for some $\alpha \in [0, 1)$, any $\vu, \vv \in \dom(\mF)$, and any $\mG_u \in \mF(\vu), \mG_\vv \in \mF(\vv),$ we have
    \begin{equation}\label{eq:hypomotone-grad}
        \innp{\mG_\vu - \mG_\vv, \vu - \vv} \geq - \frac{\tau_{\vu, \vv}}{1 + \tau_{\vu, \vv}} \|\vu - \vv\|^2,
    \end{equation}
    where $\tau_{\vu, \vv}:= \frac{\alpha}{D}\max\{\|\mG_\vu\|, \|\mG_\vv\|\}$, then $\mJ$ is $\alpha$-gradually expansive. 
\end{lemma}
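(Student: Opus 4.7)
The plan is to mirror the proof of \Cref{lem:resolvent-mildly-exp}, exploiting the fact that the only change is replacing the uniform modulus $\mu$ by a pointwise quantity $\tau_{\vu,\vv}/(1+\tau_{\vu,\vv})$. The crucial structural observation is that the pointwise modulus $\tau_{\vu,\vv}$ is exactly of the form that translates into the fixed-point error of $\mJ$: if $\vu = \mJ(\vx)$ and $\vv = \mJ(\vy)$, then by \eqref{eq:resolvent-mapping} there exist $\mG_\vu \in \mF(\vu)$ and $\mG_\vv \in \mF(\vv)$ with $\mG_\vu = \vx - \vu = \vx - \mJ(\vx)$ and $\mG_\vv = \vy - \mJ(\vy)$, so $\|\mG_\vu\| = \|\mJ(\vx) - \vx\|$ and $\|\mG_\vv\| = \|\mJ(\vy) - \vy\|$. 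Hence $\tau_{\vu,\vv} = \frac{\alpha}{D}\max\{\|\mJ(\vx) - \vx\|, \|\mJ(\vy) - \vy\|\}$, which is the precise expression appearing in \Cref{def:gradually-expansive-op}.

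With that setup, I would start from the identity
\[
    \|\vu - \vv\|^2 = \innp{\vx - \vy, \vu - \vv} - \innp{\mG_\vu - \mG_\vv, \vu - \vv},
\]
which is exactly \eqref{eq:res-1st-eq} in the proof of \Cref{lem:resolvent-mildly-exp}. Applying the hypothesis \eqref{eq:hypomotone-grad} to the specific pair $\mG_\vu, \mG_\vv$ selected above (valid because the assumption is quantified over all selections) yields
\[
    \|\vu - \vv\|^2 \leq \innp{\vx - \vy, \vu - \vv} + \tfrac{\tau_{\vu,\vv}}{1+\tau_{\vu,\vv}}\|\vu - \vv\|^2.
\]
Moving the last term to the left-hand side and simplifying gives $\tfrac{1}{1+\tau_{\vu,\vv}}\|\vu - \vv\|^2 \leq \innp{\vx - \vy, \vu - \vv}$. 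An application of the Cauchy--Schwarz inequality and division by $\|\vu - \vv\|$ then produces
\[
    \|\vu - \vv\| \leq (1 + \tau_{\vu,\vv})\|\vx - \vy\|.
\]

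Finally, I would unpack $\tau_{\vu,\vv}$ using the identification noted above. Since $\vu - \vv = \mJ(\vx) - \mJ(\vy)$ and $\tau_{\vu,\vv} = \frac{\alpha}{D}\max\{\|\mJ(\vx) - \vx\|, \|\mJ(\vy) - \vy\|\}$, the bound becomes exactly the $\alpha$-gradually expansive condition from \eqref{eq:alpha-grad-nonexpansive-def} applied to $\mJ$, so $\mJ$ is $\alpha$-gradually expansive. There is no real obstacle here; the one point that requires care is the quantifier on the hypothesis (it must hold for \emph{all} selections, which in particular covers the resolvent-induced selection $\mG_\vu = \vx - \vu$, $\mG_\vv = \vy - \vv$), and checking that $\frac{\tau}{1+\tau} < 1$ so that the rearrangement is valid — both are immediate.
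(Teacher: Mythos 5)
Your proof is correct and follows essentially the same path as the paper's: both start from the identity $\|\vu-\vv\|^2 = \innp{\vx-\vy,\vu-\vv} - \innp{\mG_\vu-\mG_\vv,\vu-\vv}$, apply the hypothesis \eqref{eq:hypomotone-grad} to the resolvent-induced selections, rearrange, invoke Cauchy--Schwarz, and then unpack $\tau_{\vu,\vv}$ using $\mG_\vu = \vx - \mJ(\vx)$ and $\mG_\vv = \vy - \mJ(\vy)$. The one extra remark you make about the quantifier over selections is a useful clarification but not a departure from the paper's argument.
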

\begin{proof}
    Similar to the proof of \Cref{lem:resolvent-mildly-exp}, fix any $\vx, \vy \in \cE$ and let $\vu = \mJ(\vx),$ $\vv = \mJ(\vy)$. Recall that $\vx = \mG_\vu + \vu$ for some $\mG_\vu \in \mF(\vu)$ and $\vy = \mG_\vv + \vv$ for some $\mG_\vv \in \mF(\vv).$ Then: 
    \begin{equation}\label{eq:res-1st-eq-grad}
        \|\vu - \vv\|^2 = \innp{\vx - \vy, \vu - \vv} - \innp{\mG_\vu - \mG_\vv, \vu - \vv}. 
    \end{equation}
    As, by assumption, $ \innp{\mG_\vu - \mG_\vv, \vu - \vv} \geq - \frac{\tau_{\vu, \vv}}{1 + \tau_{\vu, \vv}} \|\vu - \vv\|^2,$ rearranging \eqref{eq:res-1st-eq-grad} yields
    \begin{equation}\notag
        \|\vu - \vv\|^2 \leq (1 + \tau_{\vu, \vv}) \innp{\vx - \vy, \vu - \vv}.
    \end{equation}
    Thus, using the Cauchy-Schwarz inequality and simplifying,
     \begin{equation}\notag
        \|\vu - \vv\| \leq (1 + \tau_{\vu, \vv}) \|\vx - \vy\|.
    \end{equation}
    To complete the proof, it remains to recall that $\vu = \mJ(\vx),$ $\vv = \mJ(\vy),$ $\mG_\vu = \vx - \mJ(\vx),$ and $\mG_\vv = \vy - \mJ(\vy).$ Plugging these identities into the last inequality leads to the conclusion that $\mJ$ is $\alpha$-gradually expansive, by definition. 
\end{proof}

A few remarks are in order here. First, observe that it is not necessary for \eqref{eq:hypomotone-grad} to hold for all elements of $\mF(\vu)$ but only those selected by the resolvent; i.e., those that satisfy $\mG_\vx \in \mF(\mJ(\vx))$ and $\vx = \mG_\vx + \mJ(\vx)$ (though it is not immediately clear how to enforce such a condition). Second, perhaps the most basic example of an operator $\mF$ to consider is when it is expressible as the sum of a 1-Lipschitz-continuous\footnote{Here, 1-Lipschitz is w.l.o.g., as we can rescale $\mF_1$ by its Lipschitz constant.} operator $\mF_1$ and the normal cone of a closed convex set $\cC$ with a nonempty interior. Then, in the interior of the set $\cC,$ we have $\mF = \mF_1$, which is single-valued and 1-Lipschitz-continuous. The condition \eqref{eq:hypomotone-grad} for $\vx, \vy$ in the interior of $\cC$ then  becomes
\begin{equation}\notag
    \innp{\mF(\vx) - \mF(\vy), \vx - \vy} \geq -\frac{(\alpha/D)\max\{\norm{\mF(\vx)}, \norm{\mF(\vy)}\}}{1 + (\alpha/D)\max\{\norm{\mF(\vx)}, \norm{\mF(\vy)}\}}\|\vx - \vy\|^2.
\end{equation}
{This condition is similar to the $(L_0, L_1)$-Lipschitzness studied in the recent literature (with $L_0 = 0$, see \cite{choudhury2025extragradient} and references therein); however, here it is only used to bound how non-monotone the operator may be, rather than to generalize Lipschitz conditions for classes of problems previously addressed under standard Lipschitzness.}
Since it is possible, in general, for $\max\{\norm{\mF(\vx)}, \norm{\mF(\vy)}\}$ to take values of order-$D$ (or even larger, if $\cC$ does not contain a root of $\mF_1$), this condition allows for the operator $\mF_1$ to be hypomonotone between some pairs of points with a hypomonotonicity parameter that is an absolute constant smaller than one (but can be close to one). 

The condition \eqref{eq:hypomotone-grad} is required to hold on the boundary of the set $\cC$ as well for the resolvent to be gradually expansive, and such a condition seems more challenging to ensure. Even so, we note that the current literature addressing root finding problems with operators that are not monotone is only able to address problems under strong conditions either requiring $\mJ(\vx)$ to be nonexpansive (see \cite{bauschke2021generalized} for sufficient conditions that ensure the resolvent is nonexpansive) or slightly relaxing this property to the weak MVI condition introduced in \cite{diakonikolas2021efficient}; see, for instance, \cite{alacaoglu2024revisiting} and references therein. 

Finally, another avenue for applying results from this paper to root finding problems is, similar to the example above, by considering problems where $\mF$ is the sum of a 1-Lipschitz operator $\mF_1$ and the normal cone of a closed convex set $\cC$ with a nonempty interior and seeking a fixed point of the projection operator $\mT(\vx) = \proj_{\cC}(\vx - \mF_1(\vx))$. Note that, by standard arguments, points $\vx \in \cC$ with $\norm{\mT(\vx) - \vx} \leq \epsilon$ translate into $\cO(\epsilon)$-approximate roots of $\mF.$ In this case, the norm of $\mId - \mT$ is also known as the ``natural residual'' $r(\vx) = \norm{\proj_{\cC}(\vx - \mF_1(\vx)) - \vx}$ (see, e.g., \cite{facchinei2007finite}). The $\alpha$-gradual nonexpansiveness in this case becomes the following condition for $\vx, \vy \in \cC$:
\begin{equation}\label{eq:grad-exp-for-proj}
\begin{aligned}
      \norm{\proj_{\cC}(\vx - \mF_1(\vx)) - \proj_{\cC}(\vy - \mF_1(\vy))}
     \leq  \Big(1 + \frac{\alpha\max\{r(\vx),\, r(\vy)%
     \}}{D}\Big)\norm{\vx - \vy}.
\end{aligned}
\end{equation}
This seems like a more tractable condition that may be possible to verify (at least numerically) for different example problems arising in machine learning, such as, for example, those discussed in \cite{daskalakis2021independent,daskalakis2022non}. 

\subsection{{Implications on Weakly Convex Optimization}}\label{sec:weakly-convex}

{We now briefly discuss some implications of our results on \emph{last iterate} convergence for a class of weakly convex optimization problems in Euclidean spaces. We begin by reviewing basic definitions and facts pertaining to weakly convex functions. Let $\norm{\cdot}$ be a Euclidean norm on $\sR^d$, induced by an inner product $\innp{\cdot, \cdot}$. Recall that a function is said to be $\rho$-weakly convex if $\vx \mapsto f(\vx) +  \frac{\rho}{2}\norm{\vx}^2$ is convex. For weakly convex functions, the (Fr\'{e}chet) subdifferential $\partial f (\vx)$ for any $\vx \in \sR^d$  is defined as the set of all vectors $\vg \in \sR^d$ such that $f(\vy) \geq f(\vx) + \innp{\vg, \vy - \vx} - o(\norm{\vy - \vx})$ as $\vy \to \vx.$ %
Further, the following inequalities hold (see, e.g., \cite{davis2019stochastic} for the fact below and additional properties and definitions).}
{\begin{fact}
    Any lower-semicontinuous function $f: \sR^d \to \sR \cup \{+\infty\}$ is $\rho$-weakly convex if and only if either of the following conditions holds for all $\vx, \vy \in \dom(f)$:
    \begin{align}
        f(\vy) - f(\vx) - \innp{\vg_\vx, \vy - \vx} &\geq  - \frac{\rho}{2}\norm{\vy - \vx}^2, \quad&&\forall \vg_\vx \in \partial f(\vx), \\
        \innp{\vg_\vx - \vg_\vy, \vx - \vy} &\geq - \rho \norm{\vx - \vy}^2, \quad&&\forall \vg_\vx \in \partial f(\vx), \forall \vg_{\vy} \in \partial f(\vy).
    \end{align}
\end{fact}}
{Without loss of generality and to simplify notation, we assume that $\rho < 1;$ this can always be achieved by rescaling $f$ (i.e., considering $h = c f$ for $c < 1/\rho$ instead of $f$ if $\rho \geq 1$). In this case, the proximal operator at any $\vx \in \sR^d$, defined by
\begin{equation}\label{eq:prox-def}
    \prox_f(\vx) := \argmin_{\vu \in \sR^d} \Big\{f(\vu) + \frac{1}{2}\norm{\vu - \vx}^2\Big\}
\end{equation}
is well-defined and unique, and, further, $\vu = \prox_f(\vx)$ satisfies $\vg_{\vu} + \vu - \vx = \vzero$ for some $\vg_\vu \in \partial f(\vu).$ We additionally make the following assumption:
\begin{assumption}\label{assp:bnded-sublevel-set}
    Given some $\vx_0 \in \sR^d,$ the sublevel set $\cL := \{\vx: f(\vx) \leq f(\vx_0)\}$ is convex and bounded, with diameter $D < \infty.$ 
\end{assumption}
Observe that under \Cref{assp:bnded-sublevel-set}, for a $\rho$-weakly convex function $f$ with $\rho < 1,$ $\mT(\vx):= \prox_f(\vx)$ maps $\cL$ to itself, and so \Cref{assp:compact-domain} holds. 
}

{We are now ready to provide a characterization of a class of weakly convex minimization problems for which the proximal operator is gradually expansive. As a result, \Cref{thm:GHAL-grad-expansive} applies in this case and leads to the fast convergence, of the order $1/k$, for the stationarity guarantee of the last algorithm iterate. More details are provided below the proof of the following lemma.}
{\begin{lemma}\label{lemma:weakly-covnvex-grad-exp}
    Given $\vx_0$ and a proper lower-semicontinuous $\rho$-weakly convex function $f: \sR^d \to \sR \cup \{+\infty\}$ with $\rho < 1,$ suppose \Cref{assp:bnded-sublevel-set} holds. Suppose further that there exists $\hat{\alpha} \in (0, 1)$ such that for all $\vu, \vv \in \cL$ and all $\vg_\vu \in \partial f(\vu), \vg_\vv \in \partial f(\vv)$,  %
    \begin{equation}\label{eq:weakly-convex-grad-exp}
        \innp{\vg_\vu - \vg_\vv, \vu - \vv} \geq - \frac{\hat{\alpha}}{D} \max\{\norm{\vg_\vu},\, \norm{\vg_\vv}\}\norm{\vu - \vv}^2.
    \end{equation}
    Then, $\mT(\vx) := \prox_f(\vx)$ is $\alpha$-gradually expansive on $\cL$, with $\alpha = \frac{\hat{\alpha}}{1 - \hat{\alpha}}$.
\end{lemma}}
\begin{proof}
    {First, because $\rho < 1,$ $\prox$ operator is well-defined and unique. Fix any $\vx, \vy \in \cL$ and let $\vu := \mT(\vx) = \prox_f(\vx)$, $\vv := \mT(\vy) = \prox_f(\vy)$. By the definition of prox operator \eqref{eq:prox-def}, there is some $\vg_\vu \in \partial f(\vu)$ and some $\vg_\vv \in \partial f(\vv)$ such that
    \begin{equation}\label{eq:vuvv-prox-equality}
        \vg_\vu + \vu - \vx = \vzero,\quad \vg_\vv + \vv - \vy = \vzero.
    \end{equation}
    We thus have that, using \eqref{eq:vuvv-prox-equality},
    \begin{align}
        \norm{\vu - \vv}^2 &= \innp{\vx - \vy - (\vg_\vu - \vg_\vv), \vu - \vv} \notag \\
        &\leq \innp{\vx - \vy, \vu - \vv} + \frac{\hat{\alpha}}{D} \max \{\norm{\vg_\vu}, \norm{\vg_\vv}\}\norm{\vu - \vv}^2, \label{eq:weakly-convex-derivation-1}
    \end{align}
    where the last inequality is by \eqref{eq:weakly-convex-grad-exp}. 
    Observe that, because $\vx, \vy \in \cL$ and because $f(\vu) \leq f(\vx),$ $f(\vv) \leq f(\vy)$ (by the definition of prox, \eqref{eq:prox-def}), we have $\vu, \vv \in \cL$. As a consequence, $\norm{\vu - \vx} \leq D,$ $\norm{\vy - \vv} \leq D,$ and so \eqref{eq:vuvv-prox-equality} implies that $\max \{\norm{\vg_\vu}, \norm{\vg_\vv}\} \leq D.$ Thus, we can rearrange \eqref{eq:weakly-convex-derivation-1} to get
    \begin{equation}\label{eq:weakly-convex-derivation-2}
        \norm{\vu - \vv}^2 \leq \Big(1 - \frac{\hat{\alpha}}{D} \max \{\norm{\vg_\vu}, \norm{\vg_\vv}\}\Big)^{-1} \innp{\vu - \vv, \vx - \vy}. 
    \end{equation}
    If $\norm{\vu - \vv} = 0,$ then trivially $\norm{\vu - \vv} \leq \norm{\vx - \vy},$ so suppose this is not the case. Applying Cauchy-Schwarz to the right-hand side of \eqref{eq:weakly-convex-derivation-2} and dividing both sides by $\norm{\vu - \vv},$ we get
    \begin{equation}\notag
        \norm{\vu - \vv} \leq \Big(1 - \frac{\hat{\alpha}}{D} \max \{\norm{\vg_\vu}, \norm{\vg_\vv}\}\Big)^{-1} \norm{\vx - \vy}. 
    \end{equation}
    To complete the proof, it remains to argue that $\big(1 - \frac{\hat{\alpha}}{D} \max \{\norm{\vg_\vu}, \norm{\vg_\vv}\}\big)^{-1} \leq 1 + \frac{\hat{\alpha}}{1 - \hat{\alpha}} \frac{\max \{\norm{\vg_\vu}, \norm{\vg_\vv}\}}{D}.$ This follows by letting $\tau : = \frac{\hat{\alpha}}{D} \max \{\norm{\vg_\vu}, \norm{\vg_\vv}\},$ observing that $\tau \in [0, \hat{\alpha}],$ and using that $\frac{1}{1-\tau} \leq 1 + \frac{\tau}{1-\tau} \leq 1 +\frac{\tau}{1-\hat{\alpha}}.$
    }
\end{proof}
{A few remarks are in order here. First, as can be observed from the proof of \Cref{lemma:weakly-covnvex-grad-exp}, the condition \eqref{eq:weakly-convex-grad-exp} need not apply to the entire subdifferential of $f$ on $\cL;$ rather only those subgradients selected by the prox mapping. Second, \eqref{eq:weakly-convex-grad-exp} on the whole subdifferential of $f$ on $\cL$ implies weak convexity, even if it is not explicitly assumed. The statement of \Cref{lemma:weakly-covnvex-grad-exp} was chosen as is, however, for clarity.}

{Further, \Cref{lemma:weakly-covnvex-grad-exp} implies that if \eqref{eq:weakly-convex-grad-exp} holds on $\cL$ with $\hat{\alpha} \in (0, 1 - \sqrt{2}/2)$, then $\mT(\vx) := \prox_f(\vx)$ is $\alpha$-gradually expansive with $\alpha \in (0, \sqrt{2}-1)$ on $\cL.$ It follows then, by \Cref{thm:GHAL-grad-expansive}, that, given any $\epsilon > 0$ and if $\hat{\alpha}$ is not trivially close to $1 - \sqrt{2}/2$, \Cref{algo:adaGHAL-rev} applied to $\mT(\vx) := \prox_f(\vx)$ outputs an $\vxh$ with
\begin{equation}\notag
    \dist(\vzero, \partial f(\prox_f(\vxh))) \leq \norm{\prox_f(\vxh) - \vxh} \leq \epsilon
\end{equation}
within $\cO(\frac{D}{\epsilon})$ iterations/prox oracle queries to $f.$ Further, this guarantee applies to the last iterate of the algorithm. This is a much stronger guarantee than what can be achieved for general weakly convex functions, where the guarantee is on the best iterate and the oracle complexity scales with $\frac{D^2}{\epsilon^2}$ \cite{davis2019stochastic,carmon2020lower}. Instead, the oracle complexity obtained here matches the optimal oracle complexity for convex functions; see \cite{woodworth2016tight}.  
}

\section{Numerical Examples} \label{sec:num-exp}

In this section, we provide numerical examples illustrating different properties of AdaGHAL. In all the examples, AdaGHAL (cons.) refers to \Cref{algo:adaGHAL-rev} with the conservative parameter setting $\beta = 0.99, \beta' = 0.02$ from \Cref{cor:grad-exp}, while AdaGHAL (aggr.) refers to  
 refers to \Cref{algo:adaGHAL-rev} with an aggressive parameter setting $\beta = 0.5, \beta' = 0.1$. Convergence of AdaGHAL is compared to the following baselines: classical Picard iteration, classical Halpern iteration with steps $\lambda_k = \frac{1}{k+1},$ and restarted Halpern iteration---which runs the Halpern iteration until the fixed-point residual halves, then restarts the algorithm, initializing at the output of the previous run. 

 \subsection{Nonexpansive and Contractive Operators}

 First, we evaluate the performance on AdaGHAL on hard instances over the classes of nonexpansive and contractive operators. The results are shown in \Cref{fig:hard-instance}. For the hard instance, we use the construction from \cite{park2022exact}, where it was used in proving oracle complexity lower bounds for finding fixed points of $\gamma$-contractive operators with $\gamma \in (0, 1].$  The operator $\mT$ is in this case defined by $\mT = \mR_\gamma,$ where the $i^\text{th}$ coordinate of $\mR_\gamma:\sR^d \to \sR^d$, $\mR^{(i)}_\gamma$, is defined by
\begin{equation}\label{eq:hard-rotation-op}
    \mR_\gamma^{(i)}(\vx):= \begin{cases}
        s - \gamma \vx^{(d)}, &\text{ if } i = 1,\\
        \gamma \vx^{(i-1)}, &\text{ if } i \in\{2, \dots, d\}
    \end{cases},
\end{equation}
and where $s$ is a shift parameter that only affects the initial distance to the fixed point (set to $s =2$ for $\gamma < 1$ and to $s = 2/\sqrt{d}$ for $\gamma = 1$). In the plots, $d = 500.$

 \begin{figure}[t!]
    \centering
    \hspace*{\fill}
    \begin{subfigure}[b]{0.3\textwidth}
        \centering
        \includegraphics[width=\textwidth]{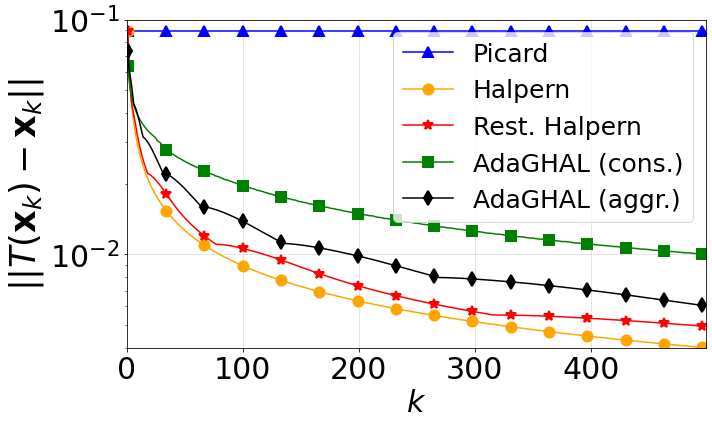}
        \caption{$\gamma = 1$}
    \end{subfigure} \hfill
    \begin{subfigure}[b]{0.3\textwidth}
        \centering
        \includegraphics[width=\textwidth]{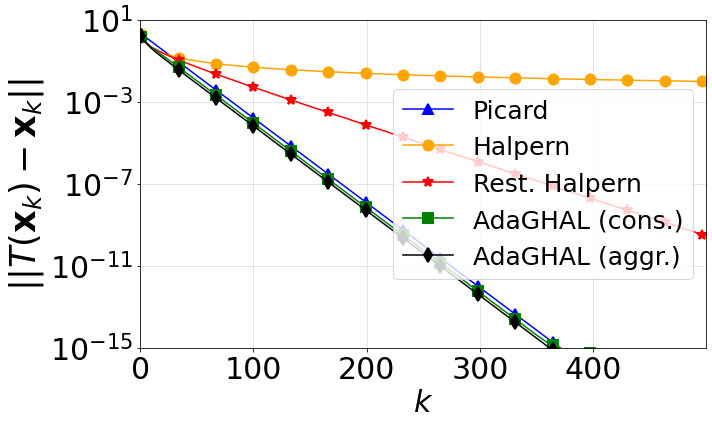}
        \caption{$\gamma = 10/11$}
    \end{subfigure} \hfill
    \begin{subfigure}[b]{0.3\textwidth}
        \centering
        \includegraphics[width=\textwidth]{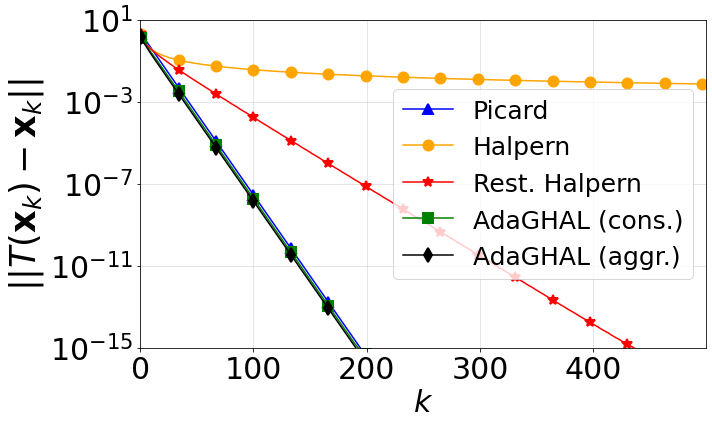}
        \caption{$\gamma = 5/6$}
    \end{subfigure}
    \hspace*{\fill}
    \caption{Algorithm comparison in terms of the fixed-point residual over the iterations, on a hard, lower bound instance.}
    \label{fig:hard-instance}
\end{figure}

To assess the local adaptivity properties of algorithms, we also evaluate them on a composition of two nonexpansive operators, of which one is contractive in a region around the fixed point. Namely, $\mT$ is defined by $\mT = \mR_1 \circ \mS$, where $\mS: \sR^d \to \sR^d$ is defined coordinatewise by $\mS^{(i)}(\vx) = c |x_i|$ for $|x_i| \leq 1/c$ and $\mS^{(i)}(\vx) = |x_i| - 1/c + 1$ otherwise, where $c \in (0, 1)$ is a constant.  
\Cref{fig:simple-adaptivity} illustrates the results for the case where the pieces $|x| \leq 1$ of $\mS^{(i)}$ have the slopes of magnitude $c \in \{0.7, 0.8, 0.9\}$, while for $|x| > 1$ the magnitude is $1.$ In other words, in this case the region close to the fixed point is contractive whereas the ``further away'' region is only nonexpansive. %

\begin{figure}[t!]
    \centering
    \hspace*{\fill}
    \begin{subfigure}[b]{0.32\textwidth}
        \centering
        \includegraphics[width=\textwidth]{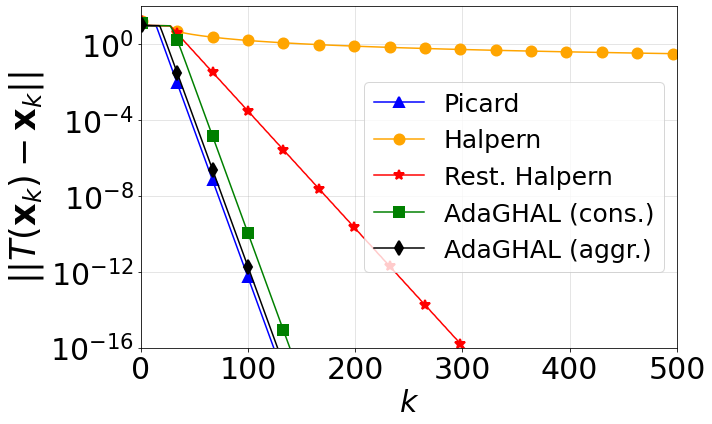}
        \caption{$c = 0.7$}
    \end{subfigure} \hfill
    \begin{subfigure}[b]{0.32\textwidth}
        \centering
        \includegraphics[width=\textwidth]{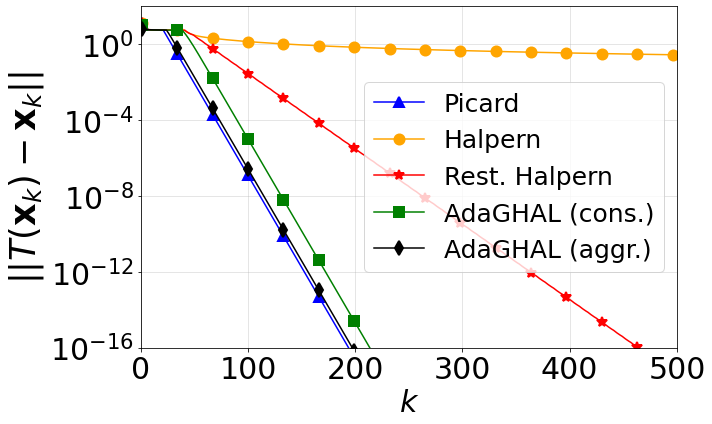}
        \caption{$c = 0.8$}
    \end{subfigure} \hfill
    \begin{subfigure}[b]{0.32\textwidth}
        \centering
        \includegraphics[width=\textwidth]{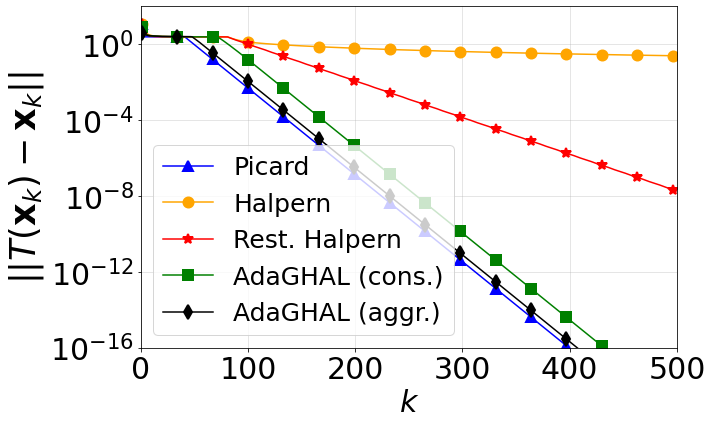}
        \caption{$c = 0.9$}
    \end{subfigure}
    \hspace*{\fill}
    \caption{Algorithm comparison in terms of the fixed-point residual over the iterations, on an instance that is nonexpansive away from the fixed point and contractive (with $\gamma = c$) in a region around the fixed point.}
    \label{fig:simple-adaptivity}
\end{figure}

\subsection{Expansive Operators}

To evaluate performance of AdaGHAL on expansive operators and compare it to baseline algorithms, we consider two examples. The first example is a simple operator that composes an expansive piecewise-affine operator with the rotation operator defined in \eqref{eq:hard-rotation-op}. In particular, in the first example, $\mT = \mS \circ \mR_1,$ where $\mS$ is defined by $\mS^{(i)}(\vx) =  \gamma x_i$ if $|x_i| \leq 1/2$ and $\mS^{(i)}(\vx) =  x_i + (1/2)(\gamma -1)$ otherwise. The results are plotted in \Cref{fig:piecewise-expansive}. 

\begin{figure}[t!]
    \centering
    \hspace*{\fill}
    \begin{subfigure}[b]{0.3\textwidth}
        \centering
        \includegraphics[width=\textwidth]{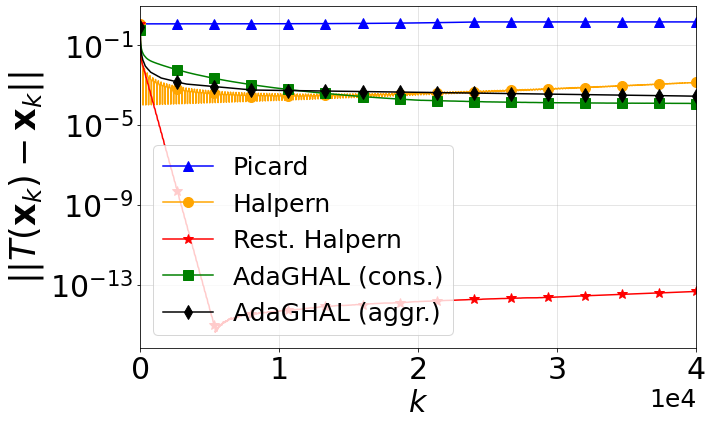}
        \caption{$\gamma =  1 + 10^{-4}$}
    \end{subfigure} \hfill
    \begin{subfigure}[b]{0.3\textwidth}
        \centering
        \includegraphics[width=\textwidth]{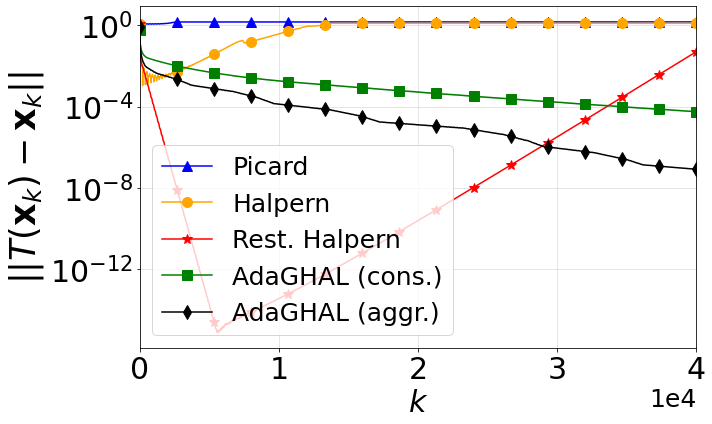}
        \caption{$\gamma = 1 + 10^{-3}$}
    \end{subfigure} \hfill
    \begin{subfigure}[b]{0.3\textwidth}
        \centering
        \includegraphics[width=\textwidth]{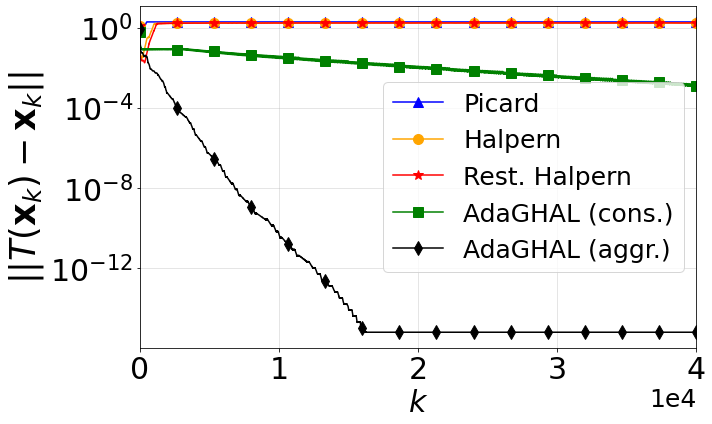}
        \caption{$\gamma = 1 + 10^{-2}$}
    \end{subfigure}
    \hspace*{\fill}
    \caption{Algorithm comparison in terms of the fixed-point residual over the iterations, on an instance that is nonexpansive away from the fixed point and expansive (with Lipschitz constant $\gamma$) in a region around the fixed point.}
    \label{fig:piecewise-expansive}
\end{figure}

The second example corresponds to a two-dimensional instance of an operator self-mapping the square $[-1, 1]^2$, defined as follows. Let $\mR_{2D}$ be the $90^{\circ}$ rotation operator; namely, $\mR_{2D}^{(1)}(\vx) = - x_2,$ $\mR_{2D}^{(2)}(\vx) = x_1.$ The operator $\mT$ is then defined by $\mT(\vx) = \proj_{[-1, 1]^2}\big(\mR_{2D}(\vx) + 0.05 \big(\exp\big(\frac{\alpha}{2}\mR_{2D}(\vx)\big) - \vone\big)\big).$ This operator has the unique fixed point at $(0, 0)$ and it is $\alpha$-gradually expansive with $\alpha = 0.4$ everywhere except in a small region around the fixed point (see \Cref{fig:gradually-expansive}(a)). The results are plotted in \Cref{fig:gradually-expansive}, with plots (c)--(g) showing the trajectories of individual algorithms.

\begin{figure}[t!]
    \centering
    \hspace*{\fill}
    \begin{subfigure}[b]{0.24\textwidth}
        \centering
        \includegraphics[width=\textwidth]{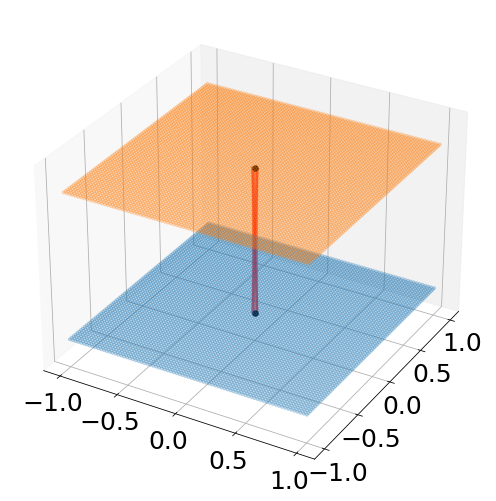}
        \caption{Failure pairs}
    \end{subfigure} \hfill
    \begin{subfigure}[b]{0.35\textwidth}
        \centering
        \includegraphics[width=\textwidth]{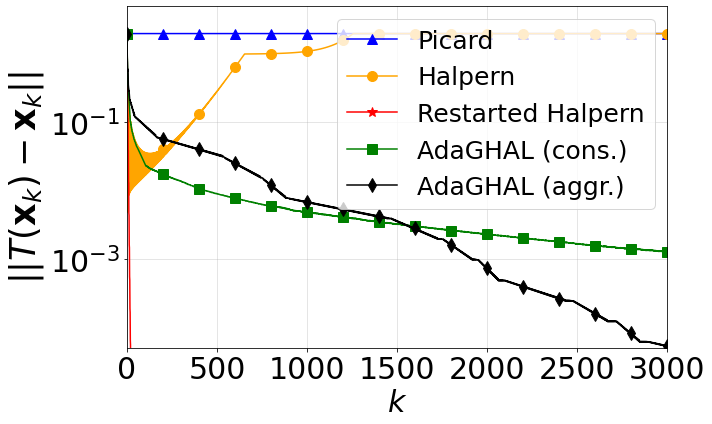}
        \caption{residual comparison}
    \end{subfigure} \hfill
    \begin{subfigure}[b]{0.24\textwidth}
        \centering
        \includegraphics[width=\textwidth]{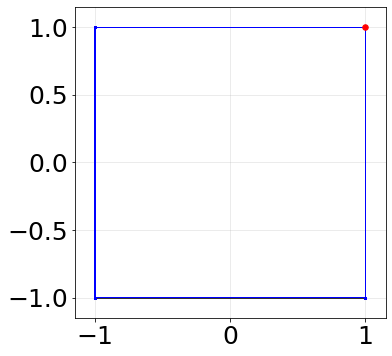}
        \caption{Picard}
    \end{subfigure}
    \hspace*{\fill}
    \hspace*{\fill}
    \begin{subfigure}[b]{0.24\textwidth}
        \centering
        \includegraphics[width=\textwidth]{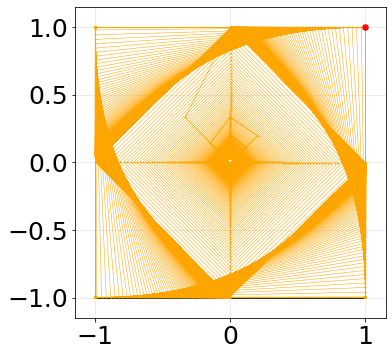}
        \caption{Halpern}
    \end{subfigure} \hfill
    \begin{subfigure}[b]{0.24\textwidth}
        \centering
        \includegraphics[width=\textwidth]{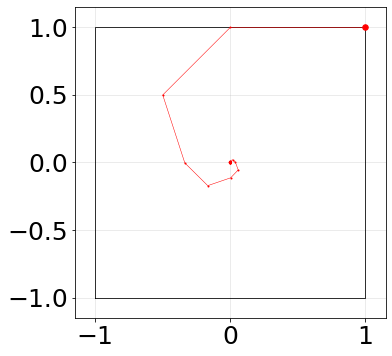}
        \caption{rest.\ Halpern}
    \end{subfigure} \hfill
    \begin{subfigure}[b]{0.24\textwidth}
        \centering
        \includegraphics[width=\textwidth]{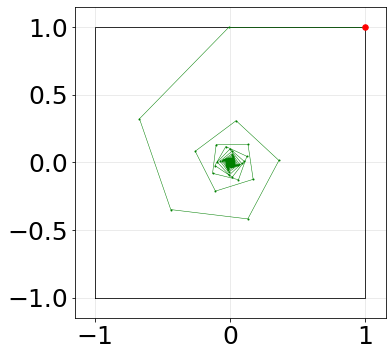}
        \caption{AdaGHAL (cons.)}
    \end{subfigure}\hfill
    \begin{subfigure}[b]{0.24\textwidth}
        \centering
        \includegraphics[width=\textwidth]{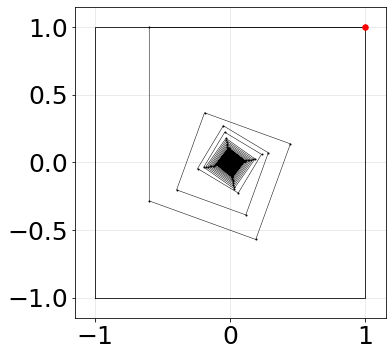}
        \caption{AdaGHAL (aggr.)}
    \end{subfigure}
    \hspace*{\fill}
    \caption{Algorithm comparison on a 2D instance that is 0.4-gradually expansive everywhere except in a small region around the fixed point $(0, 0)$: (a) red lines connect the pairs of points $\vx, \vy$ for which the gradual expansion condition fails, (b) comparison in terms of the fixed-point residual against the iteration count, (c)--(g) algorithm trajectories, initialized in the top-right corner, as indicated by the red dot.}
    \label{fig:gradually-expansive}
\end{figure}

\subsection{Discussion of Numerical Results} 

Because Picard iteration is oracle comp\-lexity-optimal for $\gamma < 1$, it is the best-performing algorithm on instances involving contractive operators. However, Picard generally does not converge when $\gamma \geq 1,$ as can be seen on examples in \Cref{fig:hard-instance}(a), \Cref{fig:piecewise-expansive}(a)--(c), and \Cref{fig:gradually-expansive}(b), (c). Similarly, Halpern iteration is oracle complexity-optimal for $\gamma = 1$, and this agrees with its numerical performance in \Cref{fig:hard-instance}(a). However, Halpern iteration can be slow when $\gamma < 1$ (\Cref{fig:hard-instance}(b), (c), \Cref{fig:simple-adaptivity}(a)--(c)) and it generally does not converge to any useful fixed-point residual on instances with expansive operators, as can be seen from \Cref{fig:piecewise-expansive}(b), (c), \Cref{fig:gradually-expansive}(b), (d). 

Restarted Halpern iteration, as expected, is competitive with standard Halpern iteration for $\gamma = 1$ and it converges linearly for contractive operators ($\gamma <1$). However, its rate of linear convergence is not competitive with Picard iteration, as can be observed from \Cref{fig:hard-instance}(b), (c) and \Cref{fig:simple-adaptivity}(a)--(c). On instances with expansive operators, restarted Halpern iteration can identify solutions with a small fixed point residual, but it is not guaranteed to converge; in fact, on examples in \Cref{fig:piecewise-expansive}, fast initial convergence is followed by increasing fixed-point residual, indicating divergence. On example in \Cref{fig:gradually-expansive}, restarted Halpern iteration converges surprisingly fast, within around 10 iterations. This fast convergence (on the gradually expansive example, and initially on the expansive example) can be explained by the large values of the step size $\lambda_k$ in initial iterations, following the restart. In that sense, restarted Halpern iteration is using a similar mechanism to AdaGHAL to correct for the operator expansion. However, the limitation we see in the diverging behavior in \Cref{fig:piecewise-expansive} comes from situations in which halving the residual is no longer possible but the step sizes $\lambda_k \propto 1/k$ keep decreasing, forcing the algorithm to diverge similar to standard Halpern iteration (which it reduces to in this scenario). Nevertheless, it would be interesting to investigate whether there are appropriate safeguard mechanisms that can prevent divergence. The main challenge is that, unlike AdaGHAL, restarted Halpern algorithm does not appear to have an obvious quantity such as iterate distance (as in AdaGHAL) or the fixed-point residual that is guaranteed to monotonically decrease.

Finally, both parameter settings of AdaGHAL gracefully adapt to $\gamma$, as predicted by the analysis: the algorithm is competitive with Picard iteration when $\gamma < 1$ and competitive with Halpern iteration for $\gamma = 1.$ Further, it converges to approximate solutions with small fixed-point residual even on examples of expansive operators in \Cref{fig:piecewise-expansive} and \Cref{fig:gradually-expansive}, often exhibiting better performance than predicted by the (worst-case) analysis. Interestingly, the aggressive step size parameter choice performs better than the conservative choice, nearly uniformly, barring for the initial iterations in \Cref{fig:gradually-expansive}(b). It would be interesting to investigate whether such  aggressive step size parameters can also be justified analytically.

\section{Further Discussion and Open Problems}

While the present work provides nontrivial classes of problems for which fixed points can be efficiently approximated, there are several avenues for future research that would deepen our understanding of this area and possibly resolve some of the open questions in game theory such as those discussed in \cite{daskalakis2022non}.

\subsection{Improving Lower Bounds (or Reducing the Gap between Upper and Lower Bounds)}

As discussed in \Cref{sec:fixed-mildly-expansive}, the  classical lower bound for solving fixed-point equations with $\gamma$-Lipschitz operators mapping $[0, D]^d$ ($d \geq 3$) to itself, due to \cite{hirsch1989exponential}, bounds below the worst case number of required queries to the operator to find a point $\vx \in [0, D]^d$ such that $\norm{\mT(\vx) - \vx} \leq \epsilon$, where $\epsilon \in (0, 1/10),$ by 
\begin{equation}\notag
    \Big(c\Big(\frac{1}{\epsilon} - 10\Big)(\gamma - 1)D\Big)^{d-2},
\end{equation}
where $c \geq 10^{-5}.$ The authors of \cite{hirsch1989exponential} conjectured that $c$ can in fact be made close to one. Obtaining such a  lower bound with a larger value of $c$ (ideally, $c = 1$) and removing the shift by 10 in the statement of the lower bound would effectively establish that our results get to the boundary of what is information-theoretically possible for polynomial-time algorithms. Note here that in such a statement of a lower bound, one would need to be careful with the interval for $\epsilon,$ as any point in the feasible set has fixed-point error bounded by $D.$ We also note here that none of the recent lower bounds such as \cite{attias2025fixed} implies a tighter range of constants for related settings (notably, obtaining tighter constants in the lower bounds was not their focus).

Conversely, any improvement to the obtained results on approximating $\norm{\mT(\vx) - \vx}$ that nontrivially reduces the gap between upper and lower bounds would be interesting.

\subsection{Convergence Under Local Error Bounds}

As discussed earlier, conditions on expansive operators $\mT$ that enforce an upper bound on some notion of a distance between an element $\vx$ and the set of fixed points of $\mT$ as a function of the fixed point error $\norm{\mT(\vx) - \vx}$ has been used in prior work \cite{natarajan1993condition,russell2018quantitative,lauster2021convergence,hermer2023rates} to ensure convergence to solutions with arbitrarily small fixed-point error is possible, often at a (locally) linear rate (though the ``constant'' of linear convergence can be quite---even exponentially in the dimension---small). Such results rely upon the expansion of $\mT$ being controlled by the parameter of the enforced local error bound. It appears possible (with some technical work) to generalize our results to settings where such a local error bound applies, possibly by slightly modifying the introduced algorithms. Some questions that we find interesting in this context are: Would the range of problems solvable by such algorithms to an arbitrarily small error be extended in some nontrivial way? Even without requiring convergence to an arbitrarily small error, can we improve the fixed-point error bound of the algorithm? How are these two notions/requirements intertwined?

\subsection{Gradual Expansion in Applications}

Finally, this work introduced the class of gradually expansive operators without providing concrete examples for which standard fixed-point iterations would fail. \Cref{sec:grad-expansive-examples} provides an example of an operator that is $\alpha$-gradually expansive (but not nonexpansive) and proves additional sufficient conditions for constructing such operators. It would be interesting to construct other nontrivial examples of operators that are $\alpha$-gradually expansive and/or prove additional structural properties for the associated class of fixed-point problems. 
In a different direction, proving gradual expansiveness for operators arising in common game theoretic settings whose complexity is not yet understood (or even verifying it numerically) could help address some of the recognized  open problems in game theory and machine learning, such as those outlined in \cite{daskalakis2022non}.

\section*{Acknowledgments}

This work was supported in part by an AFOSR Young Investigator Program award, under the contract number FA9550-24-1-0076, by the NSF CAREER Award CCF-2440563, and by the U.S.\ Office of Naval Research under contract number  N00014-22-1-2348. Any opinions, findings and conclusions or recommendations expressed in this material are those of the author(s) and do not necessarily reflect the views of the U.S. Department of Defense.

The author thanks professors Heinz Bauschke, Roberto Cominetti, Russell Luke, and Mihalis Yannakakis for useful comments and pointers to the relevant literature. 

\bibliographystyle{alpha}
\bibliography{references}

@book{bacak2014convex,
  title={Convex analysis and optimization in Hadamard spaces},
  author={Ba{\v{c}}{\'a}k, Miroslav},
  volume={22},
  year={2014},
  publisher={Walter de Gruyter GmbH \& Co KG}
}

@article{choudhury2025extragradient,
  title={Extragradient Method for $(L\_0, L\_1) $-Lipschitz Root-finding Problems},
  author={Choudhury, Sayantan and Loizou, Nicolas},
  journal={arXiv preprint arXiv:2510.22421},
  year={2025}
}

@article{carmon2020lower,
  title={Lower bounds for finding stationary points {I}},
  author={Carmon, Yair and Duchi, John C and Hinder, Oliver and Sidford, Aaron},
  journal={Mathematical Programming},
  volume={184},
  number={1},
  pages={71--120},
  year={2020},
  publisher={Springer}
}

@inproceedings{daskalakis2021independent,
  title={Independent policy gradient methods for competitive reinforcement learning},
  author={Daskalakis, Constantinos and Foster, Dylan J and Golowich, Noah},
  booktitle={Advances in Neural Information Processing Systems},
  volume={33},
  pages={5527--5540},
  year={2020}
}

@article{woodworth2016tight,
  title={Tight complexity bounds for optimizing composite objectives},
  author={Woodworth, Blake E and Srebro, Nati},
  journal={Advances in neural information processing systems},
  volume={29},
  pages={3639--3647},
  year={2016}
}

@article{chen2024computing,
  title={Computing a Fixed Point of Contraction Maps in Polynomial Queries},
  author={Chen, Xi and Li, Yuhao and Yannakakis, Mihalis},
  journal={Journal of the ACM},
  volume={72},
  number={4},
  pages={1--20},
  year={2025},
  publisher={ACM New York, NY}
}

@article{lan2023optimal,
  title={Optimal and parameter-free gradient minimization methods for convex and nonconvex optimization},
  author={Lan, Guanghui and Ouyang, Yuyuan and Zhang, Zhe},
  journal={arXiv preprint arXiv:2310.12139},
  year={2023}
}

@article{berdellima2022alpha,
  title={$\alpha$-firmly nonexpansive operators on metric spaces},
  author={B{\"e}rd{\"e}llima, Arian and Lauster, Florian and Luke, D Russell},
  journal={Journal of Fixed Point Theory and Applications},
  volume={24},
  number={1},
  pages={14},
  year={2022},
  publisher={Springer}
}

@article{pang2011nonconvex,
  title={Nonconvex games with side constraints},
  author={Pang, Jong-Shi and Scutari, Gesualdo},
  journal={SIAM Journal on Optimization},
  volume={21},
  number={4},
  pages={1491--1522},
  year={2011},
  publisher={SIAM}
}

@article{nemirovsky1991optimality,
  title={On optimality of {K}rylov's information when solving linear operator equations},
  author={Nemirovsky, Arkadi S},
  journal={Journal of Complexity},
  volume={7},
  number={2},
  pages={121--130},
  year={1991},
  publisher={Academic Press}
}

@article{banach1922operations,
  title={Sur les op{\'e}rations dans les ensembles abstraits et leur application aux {\'e}quations int{\'e}grales},
  author={Banach, Stefan},
  journal={Fundamenta mathematicae},
  volume={3},
  number={1},
  pages={133--181},
  year={1922},
  publisher={Polska Akademia Nauk. Instytut Matematyczny PAN}
}

@article{bauschke2021generalized,
  title={Generalized monotone operators and their averaged resolvents},
  author={Bauschke, Heinz H and Moursi, Walaa M and Wang, Xianfu},
  journal={Mathematical Programming},
  volume={189},
  pages={55--74},
  year={2021},
  publisher={Springer}
}

@inproceedings{park2022exact,
  title={Exact optimal accelerated complexity for fixed-point iterations},
  author={Park, Jisun and Ryu, Ernest K},
  booktitle={International Conference on Machine Learning},
  pages={17420--17457},
  year={2022},
  organization={PMLR}
}

@article{ouyang2021lower,
  title={Lower complexity bounds of first-order methods for convex-concave bilinear saddle-point problems},
  author={Ouyang, Yuyuan and Xu, Yangyang},
  journal={Mathematical Programming},
  volume={185},
  number={1-2},
  pages={1--35},
  year={2021},
  publisher={Springer}
}

@inproceedings{cai2024variance,
  title={Variance Reduced {H}alpern Iteration for Finite-Sum Monotone Inclusions},
  author={Cai, Xufeng and Alacaoglu, Ahmet and Diakonikolas, Jelena},
  booktitle={International Conference on Learning Representations},
year = {2024}
}

@inproceedings{cai2022stochastic,
  title={Stochastic {H}alpern Iteration with Variance Reduction for Stochastic Monotone Inclusions},
  author={Cai, Xufeng and Song, Chaobing and Guzm{\'a}n, Crist{\'o}bal and Diakonikolas, Jelena},
 booktitle={Advances in Neural Information Processing Systems},
  volume={35},
  pages={24766--24779},
  year={2022}
}

@inproceedings{diakonikolas2021efficient,
  title={Efficient methods for structured nonconvex-nonconcave min-max optimization},
  author={Diakonikolas, Jelena and Daskalakis, Constantinos and Jordan, Michael I},
  booktitle={International Conference on Artificial Intelligence and Statistics},
  pages={2746--2754},
  year={2021}
}

@inproceedings{lee2021fast,
  title={Fast extra gradient methods for smooth structured nonconvex-nonconcave minimax problems},
  author={Lee, Sucheol and Kim, Donghwan},
  booktitle={Advances in Neural Information Processing Systems},
  volume={34},
  pages={22588--22600},
  year={2021}
}

@article{tran2021halpern,
  title={Halpern-Type Accelerated and Splitting Algorithms For Monotone Inclusions},
  author={Tran-Dinh, Quoc and Luo, Yang},
  journal={arXiv preprint arXiv:2110.08150},
  year={2021}
}

@article{diakonikolas2021potential,
  author = {Diakonikolas, Jelena and Wang, Puqian},
title = {Potential Function-Based Framework for Minimizing Gradients in Convex and Min-Max Optimization},
journal = {SIAM Journal on Optimization},
volume = {32},
number = {3},
pages = {1668-1697},
year = {2022}
}

@article{pang1997error,
  title={Error bounds in mathematical programming},
  author={Pang, Jong-Shi},
  journal={Mathematical Programming},
  volume={79},
  number={1},
  pages={299--332},
  year={1997},
  publisher={Springer}
}

@article{Sabach:2017,
  author    = {Shoham Sabach and
               Shimrit Shtern},
  title     = {A First Order Method for Solving Convex Bilevel Optimization Problems},
  journal   = {{SIAM} Journal on Optimization},
  volume    = {27},
  number    = {2},
  pages     = {640--660},
  year      = {2017}
}

@inproceedings{diakonikolas2020halpern,
  title={Halpern Iteration for Near-Optimal and Parameter-Free Monotone Inclusion and Strong Solutions to Variational Inequalities},
  author={Diakonikolas, Jelena},
  booktitle={Conference on Learning Theory},
  pages={1428--1451},
  year={2020},
  organization={PMLR}
}

@article{lieder2019convergence,
  title={On the Convergence Rate of the {Halpern}-Iteration},
  author={Lieder, Felix},
  journal={Optimization Letters},
  volume={15},
  number={2},
  pages={405--418},
  year={2021},
  publisher={Springer}
}

@book{facchinei2007finite,
  title={Finite-dimensional variational inequalities and complementarity problems},
  author={Facchinei, Francisco and Pang, Jong-Shi},
  year={2003},
  publisher={Springer Science \& Business Media}
}

@InProceedings{Yoon2021OptimalGradientNorm,
title={Accelerated Algorithms for Smooth Convex-Concave Minimax Problems with {$O(1/k^2)$} Rate on Squared Gradient Norm},
author={Yoon, Taeho and Ryu, Ernest K},
booktitle = 	{International Conference on Machine Learning},
  pages={12098--12109},
  year={2021},
  organization={PMLR}
  }

@article{halpern1967fixed,
  title={Fixed points of nonexpanding maps},
  author={Halpern, Benjamin},
  journal={Bulletin of the American Mathematical Society},
  volume={73},
  number={6},
  pages={957--961},
  year={1967}
}

@article{wittmann1992approximation,
  title={Approximation of fixed points of nonexpansive mappings},
  author={Wittmann, Rainer},
  journal={Archiv der Mathematik},
  volume={58},
  number={5},
  pages={486--491},
  year={1992}
}

@article{leustean2007rates,
  title={Rates of Asymptotic Regularity for {H}alpern Iterations of Nonexpansive Mappings.},
  author={Leustean, Laurentiu},
  journal={Journal of Universal Computer Science},
  volume={13},
  number={11},
  pages={1680--1691},
  year={2007}
}

@book{kohlenbach2008applied,
  title={Applied proof theory: proof interpretations and their use in mathematics},
  author={Kohlenbach, Ulrich},
  year={2008},
  publisher={Springer Science \& Business Media}
}

@article{browder1967convergence,
  title={Convergence of approximants to fixed points of nonexpansive nonlinear mappings in {B}anach spaces},
  author={Browder, Felix E},
  journal={Archive for Rational Mechanics and Analysis},
  volume={24},
  number={1},
  pages={82--90},
  year={1967}
}

@article{kim2019accelerated,
title={Accelerated proximal point method for maximally monotone operators},
  author={Kim, Donghwan},
  journal={Mathematical Programming},
  volume={190},
  number={1},
  pages={57--87},
  year={2021},
  publisher={Springer}
}

@article{drori2014performance,
  title={Performance of first-order methods for smooth convex minimization: a novel approach},
  author={Drori, Yoel and Teboulle, Marc},
  journal={Mathematical Programming},
  volume={145},
  number={1-2},
  pages={451--482},
  year={2014}
}

@article{davis2019stochastic,
  title={Stochastic model-based minimization of weakly convex functions},
  author={Davis, Damek and Drusvyatskiy, Dmitriy},
  journal={SIAM Journal on Optimization},
  volume={29},
  number={1},
  pages={207--239},
  year={2019}
}

@article{kohlenbach2011quantitative,
  title={On quantitative versions of theorems due to {F.\ E.\ Browder and R.\ Wittmann}},
  author={Kohlenbach, Ulrich},
  journal={Advances in Mathematics},
  volume={226},
  number={3},
  pages={2764--2795},
  year={2011},
  publisher={Elsevier}
}

@article{hirsch1989exponential,
  title={Exponential lower bounds for finding {B}rouwer fixed points},
  author={Hirsch, Michael D and Papadimitriou, Christos H and Vavasis, Stephen A},
  journal={Journal of Complexity},
  volume={5},
  number={4},
  pages={379--416},
  year={1989}
}

@article{bravo2024stochastic,
 title={Stochastic {H}alpern iteration in normed spaces and applications to reinforcement learning},
 author={Bravo, Mario and Contreras, Juan Pablo},
  journal={Mathematical Programming},
  pages={1--39},
  year={2026},
  publisher={Springer}
}

@article{bravo2026minimax,
  title={Minimax-optimal Halpern iterations for {L}ipschitz maps},
  author={Bravo, Mario and Cominetti, Roberto and Lee, Jongmin},
  journal={arXiv preprint arXiv:2601.15996},
  year={2026}
}

@article{rafique2022weakly,
  title={Weakly-convex--concave min--max optimization: provable algorithms and applications in machine learning},
  author={Rafique, Hassan and Liu, Mingrui and Lin, Qihang and Yang, Tianbao},
  journal={Optimization Methods and Software},
  volume={37},
  number={3},
  pages={1087--1121},
  year={2022},
  publisher={Taylor \& Francis}
}

@article{yannakakis2009equilibria,
  title={Equilibria, fixed points, and complexity classes},
  author={Yannakakis, Mihalis},
  journal={Computer Science Review},
  volume={3},
  number={2},
  pages={71--85},
  year={2009},
  publisher={Elsevier}
}

@article{attias2025fixed,
  title={Fixed Point Computation: Beating Brute Force with Smoothed Analysis},
  author={Attias, Idan and Dagan, Yuval and Daskalakis, Constantinos and Yao, Rui and Zampetakis, Manolis},
  journal={arXiv preprint arXiv:2501.10884},
  year={2025}
}

@article{brouwer1911abbildung,
  title={{\"U}ber abbildung von mannigfaltigkeiten},
  author={Brouwer, Luitzen Egbertus Jan},
  journal={Mathematische annalen},
  volume={71},
  number={1},
  pages={97--115},
  year={1911},
  publisher={Springer}
}

@article{chen2008matching,
  title={Matching algorithmic bounds for finding a {B}rouwer fixed point},
  author={Chen, Xi and Deng, Xiaotie},
  journal={Journal of the ACM (JACM)},
  volume={55},
  number={3},
  pages={1--26},
  year={2008},
  publisher={ACM New York, NY, USA}
}

@article{daskalakis2022non,
  title={Non-concave games: A challenge for game theory’s next 100 years},
  author={Daskalakis, Constantinos},
  journal={Cowles Preprints},
  year={2022}
}

@article{kuhn1968simplicial,
  title={Simplicial approximation of fixed points},
  author={Kuhn, Harold W},
  journal={Proceedings of the National Academy of Sciences},
  volume={61},
  number={4},
  pages={1238--1242},
  year={1968}
}

@article{lemke1965bimatrix,
  title={Bimatrix equilibrium points and mathematical programming},
  author={Lemke, Carlton E},
  journal={Management science},
  volume={11},
  number={7},
  pages={681--689},
  year={1965},
  publisher={INFORMS}
}

@article{lemke1964equilibrium,
  title={Equilibrium points of bimatrix games},
  author={Lemke, Carlton E and Howson, Jr, Joseph T},
  journal={Journal of the Society for industrial and Applied Mathematics},
  volume={12},
  number={2},
  pages={413--423},
  year={1964},
  publisher={SIAM}
}

@article{papadimitriou1994complexity,
  title={On the complexity of the parity argument and other inefficient proofs of existence},
  author={Papadimitriou, Christos H},
  journal={Journal of Computer and system Sciences},
  volume={48},
  number={3},
  pages={498--532},
  year={1994},
  publisher={Elsevier}
}

@article{scarf1967approximation,
  title={The approximation of fixed points of a continuous mapping},
  author={Scarf, Herbert},
  journal={SIAM Journal on Applied Mathematics},
  volume={15},
  number={5},
  pages={1328--1343},
  year={1967},
  publisher={SIAM}
}

@article{spielman2004smoothed,
  title={Smoothed analysis of algorithms: Why the simplex algorithm usually takes polynomial time},
  author={Spielman, Daniel A and Teng, Shang-Hua},
  journal={Journal of the ACM (JACM)},
  volume={51},
  number={3},
  pages={385--463},
  year={2004},
  publisher={ACM New York, NY, USA}
}

@article{russell2018quantitative,
  title={Quantitative convergence analysis of iterated expansive, set-valued mappings},
  author={Luke, D Russell and Thao, Nguyen H and Tam, Matthew K},
  journal={Mathematics of Operations Research},
  volume={43},
  number={4},
  pages={1143--1176},
  year={2018},
  publisher={INFORMS}
}

@article{lauster2021convergence,
  title={Convergence of proximal splitting algorithms in {CAT($\kappa$)} spaces and beyond},
  author={Lauster, Florian and Luke, D Russell},
  journal={Fixed Point Theory and Algorithms for Sciences and Engineering},
  volume={2021},
  number={1},
  pages={13},
  year={2021},
  publisher={Springer}
}

@article{hermer2023rates,
  title={Rates of convergence for chains of expansive {M}arkov operators},
  author={Hermer, Neal and Luke, D Russell and Sturm, Anja},
  journal={Transactions of Mathematics and Its Applications},
  volume={7},
  number={1},
  pages={tnad001},
  year={2023},
  publisher={Oxford University Press}
}

@article{baillion1996rate,
  title={The Rate of Asymptotic Regularity Is {$O (1/  n)$}},
  author={Baillion, Jean-Bernard and Bruck, Ronald E},
  journal={Lecture Notes in Pure and Applied Mathematics},
  pages={51--82},
  year={1996},
  publisher={MARCEL DEKKER AG}
}

@article{contreras2023optimal,
  title={Optimal error bounds for non-expansive fixed-point iterations in normed spaces},
  author={Contreras, Juan Pablo and Cominetti, Roberto},
  journal={Mathematical Programming},
  volume={199},
  number={1},
  pages={343--374},
  year={2023},
  publisher={Springer}
}

@article{bravo2021universal,
  title={Universal bounds for fixed point iterations via optimal transport metrics},
  author={Bravo, Mario and Champion, Thierry and Cominetti, Roberto},
   journal={Applied Set-Valued Analysis \& Optimization},
  volume={4},
  number={3},
  year={2022}
}

@article{opial1967weak,
  title={Weak convergence of the sequence of successive approximations of nonexpansive mappings},
  author={Opial, Zdzis{\l}av},
  journal={Bull. Amer. Math. Soc.},
  volume={73},
  pages={591--597},
  year={1967}
}

@inproceedings{lee2025near,
  title={Near-Optimal Sample Complexity for {MDPs} via Anchoring},
  author={Lee, Jongmin and Bravo, Mario and Cominetti, Roberto},
    booktitle={Forty-second International Conference on Machine Learning},
  year={2025}
}

@article{he2024convergence,
  title={Convergence analysis of the {H}alpern iteration with adaptive anchoring parameters},
  author={He, Songnian and Xu, Hong-Kun and Dong, Qiao-Li and Mei, Na},
  journal={Mathematics of Computation},
  volume={93},
  number={345},
  pages={327--345},
  year={2024}
}

@article{bravo2019rates,
  title={Rates of convergence for inexact {Krasnosel’skii-Mann} iterations in {B}anach spaces},
  author={Bravo, Mario and Cominetti, Roberto and Pavez-Sign{\'e}, Mat{\'\i}as},
  journal={Mathematical Programming},
  volume={175},
  pages={241--262},
  year={2019},
  publisher={Springer}
}

@article{mann1953mean,
  title={Mean value methods in iteration},
  author={Mann, W Robert},
  journal={Proceedings of the American Mathematical Society},
  volume={4},
  number={3},
  pages={506--510},
  year={1953},
  publisher={JSTOR}
}

@article{krasnosel1955two,
  title={Two remarks on the method of successive approximations},
  author={Krasnosel'skii, Mark Aleksandrovich},
  journal={Uspekhi matematicheskikh nauk},
  volume={10},
  number={1},
  pages={123--127},
  year={1955},
  publisher={Russian Academy of Sciences, Steklov Mathematical Institute of Russian~…}
}

@article{edelstein1966remark,
  title={A remark on a theorem of {M.\ A.\ Krasnoselski}},
  author={Edelstein, Michael},
  journal={Amer. Math. Monthly},
  volume={73},
  pages={509--510},
  year={1966}
}

@article{ishikawa1974fixed,
  title={Fixed points by a new iteration method},
  author={Ishikawa, Shiro},
  journal={Proceedings of the American Mathematical Society},
  volume={44},
  number={1},
  pages={147--150},
  year={1974}
}

@book{todd2013computation,
  title={The computation of fixed points and applications},
  author={Todd, Michael J},
  volume={124},
  year={2013},
  publisher={Springer Science \& Business Media}
}

@article{cominetti2014rate,
  title={On the rate of convergence of {Krasnosel’ski{\u\i}-Mann} iterations and their connection with sums of {B}ernoullis},
  author={Cominetti, Roberto and Soto, Jos{\'e} A and Vaisman, Jos{\'e}},
  journal={Israel Journal of Mathematics},
  volume={199},
  pages={757--772},
  year={2014},
  publisher={Springer}
}

@article{goebel1973minimal,
  title={On the minimal displacement of points under {L}ipschitzian mappings},
  author={Goebel, Kazimierz},
  journal={Pacific Journal of Mathematics},
  volume={45},
  number={1},
  pages={151--163},
  year={1973},
  publisher={Mathematical Sciences Publishers}
}

@article{browder1965nonexpansive,
  title={Nonexpansive nonlinear operators in a {B}anach space},
  author={Browder, Felix E},
  journal={Proceedings of the National Academy of Sciences},
  volume={54},
  number={4},
  pages={1041--1044},
  year={1965}
}

@article{gohde1965prinzip,
  title={Zum prinzip der kontraktiven abbildung},
  author={G{\"o}hde, Dietrich},
  journal={Mathematische Nachrichten},
  volume={30},
  number={3-4},
  pages={251--258},
  year={1965},
  publisher={Wiley Online Library}
}

@article{kirk1965fixed,
  title={A fixed point theorem for mappings which do not increase distances},
  author={Kirk, William A},
  journal={The American mathematical monthly},
  volume={72},
  number={9},
  pages={1004--1006},
  year={1965},
  publisher={JSTOR}
}

@article{chou1987optimality,
  title={On the optimality of {K}rylov information},
  author={Chou, Arthur W},
  journal={Journal of Complexity},
  volume={3},
  number={1},
  pages={26--40},
  year={1987},
  publisher={Elsevier}
}

@article{traub1984optimal,
  title={On the optimal solution of large linear systems},
  author={Traub, Joseph F and Wo{\'z}niakowski, H},
  journal={Journal of the ACM (JACM)},
  volume={31},
  number={3},
  pages={545--559},
  year={1984},
  publisher={ACM New York, NY, USA}
}

@article{sikorski1987complexity,
  title={Complexity of fixed points, {I}},
  author={Sikorski, K and Wo{\'z}niakowski, Henryk},
  journal={Journal of Complexity},
  volume={3},
  number={4},
  pages={388--405},
  year={1987},
  publisher={Elsevier}
}

@article{chen2024near,
  title={Near-optimal algorithms for making the gradient small in stochastic minimax optimization},
  author={Chen, Lesi and Luo, Luo},
  journal={Journal of Machine Learning Research},
  volume={25},
  number={387},
  pages={1--44},
  year={2024}
}

@article{sikorski2009computational,
  title={Computational complexity of fixed points},
  author={Sikorski, Krzysztof},
  journal={Journal of fixed point theory and applications},
  volume={6},
  pages={249--283},
  year={2009},
  publisher={Springer}
}

@article{natarajan1993condition,
  title={Condition-sensitive computation of approximate fixed points},
  author={Natarajan, Balas K},
  journal={Journal of Complexity},
  volume={9},
  number={3},
  pages={406--411},
  year={1993},
  publisher={Elsevier}
}

@inproceedings{alacaoglu2024revisiting,
  title={Revisiting Inexact Fixed-Point Iterations for Min-Max Problems: Stochasticity and Structured Nonconvexity},
  author={Alacaoglu, Ahmet and Kim, Donghwan and Wright, Stephen},
  booktitle={International Conference on Machine Learning},
  pages={840--878},
  year={2024},
  organization={PMLR}
}
\end{document}